\documentclass[a4paper,12pt]{amsart}

\usepackage{amsmath,amsthm,amssymb,mathrsfs}
\usepackage{booktabs,graphicx,xcolor,enumerate}
\usepackage{verbatim}
\colorlet{RED}{red}
\usepackage{float}
\usepackage[pagewise]{lineno}
\usepackage[hidelinks]{hyperref}
\usepackage{tikz}
\usetikzlibrary{arrows.meta,positioning}

\newtheorem{theorem}{Theorem}
\newtheorem{lemma}[theorem]{Lemma}
\newtheorem{proposition}[theorem]{Proposition}
\newtheorem{corollary}[theorem]{Corollary}

\newtheorem{remark}[theorem]{Remark}
\numberwithin{equation}{section}
\usepackage[parfill]{parskip}
\usepackage[all]{xy}

\newcommand{\Hom}{\operatorname{Hom}}

\newcommand{\T}{\mathcal T}
\newcommand{\Tone}[1]{%
	\begin{tikzpicture}[scale=1.15]
		\draw[->] (-.25,0) -- (1.55,0);
		\draw[->] (0,-.25) -- (0,1.55);
		\draw[dashed, gray!55] (0,0) -- (1,1);
		\foreach \x/\y in {0/0,0/1,1/1} {
			\fill[gray!35] (\x,\y) circle (2.4pt);
		}
		#1
\end{tikzpicture}}
\begin{document}

\title[Cokernel-closed subcategories]
{On the number of cokernel-closed\\ additive subcategories\\
for uniformly oriented $A_n$ quivers}
\author{Volodymyr Mazorchuk and Shuoyang Xu}

%
%
%

\begin{abstract}
We study the integer sequence that enumerates co\-ker\-nel-closed, 
idempotent split, full, additive subcategories of the category 
of finite  dimensional complex representations of a uniformly 
oriented  $A_n$ quiver. Using a combinatorial model, we describe 
a non-obvious connection of this sequence to Catalan numbers and 
derive an implicit recurrence. We also describe some basic properties
of the lattice underlying the sequences, in particular, we
give an explicit description of the meet irreducible elements of
that lattice.
\end{abstract}

\maketitle

\section{Introduction}

A recent work of Krause and Stoye, see \cite{KS}, studies, 
in particular, different types of additive subcategories of the category
of finite  dimensional complex representations of a uniformly 
oriented  $A_n$ quiver. The conditions that define these
subcategories require that they are closed under some 
categorical constructions, for example, extensions, kernels, 
cokernels and others. This leads naturally to several 
enumeration problems, some of which admit explicit formulae, 
while for others only a few first terms are known. 
Two such mysterious sequences that were identified in
\cite{KS} enumerate extensions closed subcategories and
cokernel closed subcategories. A significant progress in
understanding the number of extension closed subcategories
was made in \cite{Maz}, with some help from Microsoft Copilot.
This includes an explicit summation-type recurrence for the sequence
that allowed for very efficient computations
as well as some asymptotic estimates for its growth.
Julian K{\"u}lshammer formalized the results of
\cite{Maz} in Lean~4 using Aristotle and then used
Aristotle and Claude to significantly improve them, \cite{Kul}.
In particular, it was shown that the sequence in question
satisfies a three-term recurrence with polynomial coefficients
which also allowed to obtain a complete and explicit
formula for its growth (everything automatically formalized
in Lean~4). Note that the recent preprint \cite{En} 
is closely related to another sequence
(with easy enumeration) identified in \cite{KS}.

In the present note we make some first steps in understanding
the other sequence, namely, the one which enumerates
cokernel closed subcategories. At the moment, this one 
looks significantly more complicated.
In more detail, we consider full, additive, idempotent split 
subcategories of the category of finite-dimensional representations 
of a uniformly oriented quiver of type $A_n$. The indecomposable 
representations are indexed by intervals $[a,b]$, with 
$1\leq a\leq b\leq n$. Hence a subcategory we are interested in 
is uniquely determined by a subset of such intervals.
This subset tells us which indecomposable modules our subcategory 
contains.  Our aim is to enumerate such subcategories that are closed 
under cokernels, or, equivalently, under finite colimits.

We look at the problem combinatorially. We arrange the intervals 
as points in a triangular array, so that our subcategory corresponds to 
a subset of this triangle. The condition of being closed under cokernels 
then becomes an explicit closure condition on this subset. To pass from 
a triangle of size $n-1$ to one of size $n$, it is enough to determine 
which points may be added in the new top row. The resulting 
forced containments among the new points are encoded by an oriented graph.

Different subsets of the smaller triangle may admit exactly the 
same choices for possible extensions in the new top row. We therefore 
group such subsets into implication states. We show that each 
implication state can be described  by a threshold vector
$$
t(Y)=(t_0(Y),\ldots,t_n(Y)),
\qquad
t_j(Y)=\min\{i\mid i\preceq_Y j\}.
$$
The possible threshold vectors satisfy a simple intrinsic condition, 
which leads to one of the classical realizations of Catalan numbers. In particular, 
the implication states at level $m$ are counted by the 
Catalan number $C_{m+2}$. The same family of vectors appears in
the combinatorial interpretation of Catalan numbers that can be found
in \cite{Sta}, see also \cite{StaEC}.

The state description also makes it possible to enumerate the original
cokernel-closed subcategories recursively. For each threshold vector $t$, we
keep track of its multiplicity $q_n(t)$, namely the number of closed subsets
giving rise to the corresponding state. We derive a complete recurrence for
these multiplicities. Summing the contributions of all states then yields a
recurrence for the total number of cokernel-closed additive subcategories. 
The first values that we were able to compute are:
\begin{gather*}
2,\ 7,\ 37,\ 265,\ 2396,\ 26118,\ 332723,\ 4845044,\\
79327314,\ 1441840473,\ 28797904948,\ 626815406988,\\
14765197821831,\ 374200770218086,\ 10151746125999166.
\end{gather*}
In particular, this provides nine new values compared to
what was computed in \cite{KS}.

We also investigate various other properties of our sequence.
Out ultimate goal, which we failed to achieve, is to find a closed
and explicit recurrence for our sequence. Various attempts to establish
such a recurrence lead to interesting subsequences. For example,
we can split our sequence into a sum of subsequences with respect to
the number of posits missing in the first column. For small values
for the number of such missing points, we derive an explicit enumeration,
which turns out to be polynomial in $n$. We propose a conjecture that
this polynomial behavior holds for any fixed number of missing points.

We also look into the lattice structure of the set of our subcategories, 
which is partially ordered with  respect to inclusions. We describe its 
atoms and coatoms as well as join irreducible and meet irreducible 
elements. The description of meet irreducible elements is complete, 
aesthetically quite appealing and fairly non-trivial.

The paper is organized as follows. 
Section~\ref{sec:categorical-origin} recalls the representation-theoretic
background and the criterion for cokernel closure. 
In Section~\ref{sec:coordinates}, we introduce
the triangular combinatorial model and the implication graphs associated with
extensions by a new top row.
Section~\ref{sec:catalan} studies implication states and threshold
vectors and establishes their connection with Catalan numbers. 
In Section~\ref{sec:firstc} we investigate the subsequences
defined in terms of the number of points missing in the first column
and enumerate some of them. Here we also provide an enumeration for
the number of elements for which all columns are not empty.
In Section~\ref{sec.recurrence}, we  derive 
recurrences for the state multiplicities and for the total
number of cokernel-closed additive subcategories.
Finally, in Section~\ref{s19}, we present our results 
on atoms, coatoms, join irreducible and meet irreducible elements. 

\setcounter{section}{1}

\section{Representation-theoretic background}
\label{sec:categorical-origin}
\subsection{Quiver and representations}
Let $n$ be a positive integer. Consider the following quiver $A_n$:
$$
A_n:\quad
1\xrightarrow{\alpha_1}2\xrightarrow{\alpha_2}\cdots
\xrightarrow{\alpha_{n-1}}n.
$$
Following \cite{KS}, let
$\operatorname{rep}(A_n)$ be the category of contravariant finite-dimensional representations of $A_n$. An object $V\in \operatorname{rep}(A_n)$ assigns a vector
space $V(i)$ to each vertex and a linear map
$V(\alpha_i):V(i+1)\longrightarrow V(i)$ to each arrow. For $1\le a\le b\le n$, define $V_{a,b}\in \operatorname{rep}(A_n)$ as follows:
$$
\begin{aligned}
	V_{a,b}(i):=\begin{cases}
		\mathbb{C},  & a\le i\le b;\\
		0, & \text{else};
	\end{cases}
\hspace{3em}
	V_{a,b}(\alpha_i):=\begin{cases}
		Id_\mathbb{C},  & a\le i\le b-1;\\
		0, & \text{else}.
	\end{cases}
\end{aligned}
$$
Every indecomposable object of $\operatorname{rep}(A_n)$ is isomorphic to a unique $V_{a,b}$,
and
$$
\Hom_{\operatorname{rep}(A_n)}(V_{a,b},V_{c,d})=
\begin{cases}
	\mathbb C,&a\leq c\leq b\leq d,\\
	0,&\text{otherwise}.
\end{cases}
$$
In particular, $V_{a,b}$ is supported on the closed
interval $[a,b]$. 

\subsection{Cokernel-closed additive subcategories}
Let $\mathcal C\subseteq \operatorname{rep}(A_n)$ be a full, additive,
idempotent split subcategory. We say that $\mathcal C$ is
\emph{cokernel-closed} if $\operatorname{Coker}(f)\in\mathcal C$, for every
morphism $f:X\longrightarrow Y$ with $X,Y\in\mathcal C$.
The following lemma is taken from \cite{KS}, see \cite[Lemma~7]{KS}.

\begin{lemma}
\label{lem:cokernel-formula}
A subcategory $\mathcal C$ as above
is cokernel-closed if and only if it is closed
under the cokernels which have the following forms:
$$
V_{a,b}\xrightarrow{f}V_{c,d}
\qquad\text{and}\qquad
V_{a,b}\xrightarrow{(f,g)}V_{c,d}\oplus V_{c',d'},
$$
where $f$ and $g$ are non-zero. Moreover,
$$
\begin{aligned}
	\operatorname{Coker}(f)&=V_{b+1,d},\\
	\operatorname{Coker}((f,g))&={V_{b+1,\min(d,d')}\oplus V_{\max(c,c'),\max(d,d')}},
\end{aligned}
$$
where, by convention, $V_{p,q}=0$ for $p > q$.
\end{lemma}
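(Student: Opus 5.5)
The formulas are a pointwise computation, and the reduction has three stages: first to an indecomposable source, then to an antichain of target summands, then to at most two target summands. Throughout I use that $\operatorname{rep}(A_n)$ is abelian and hereditary, with indecomposables $V_{a,b}$ and the $\Hom$-description recalled above.

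\emph{Step 1: the formulas.} A non-zero $f:V_{a,b}\to V_{c,d}$ requires $a\le c\le b\le d$. Up to a scalar, $f$ is the identity on each vertex $i\in[c,b]$, so its image is $V_{c,b}$. The quotient is then supported on $[b+1,d]$ with identity structure maps, which gives $V_{b+1,d}$. For $(f,g)$ into $V_{c,d}\oplus V_{c',d'}$, I compute the cokernel vertexwise. The dimension vector is $\dim V_{c,d}+\dim V_{c',d'}-\dim V_{\max(c,c'),b}$. I then check that it decomposes as $V_{b+1,\min(d,d')}\oplus V_{\max(c,c'),\max(d,d')}$ by writing an explicit basis adapted to the diagonal image. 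Without loss of generality take $c\le c'$ and split into the cases $d\le d'$ and $d>d'$. This is routine linear algebra.

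\emph{Step 2: reduction to an indecomposable source.} Let $f:X\to Y$ with $X,Y\in\mathcal C$, and write $X=X_1\oplus X_2$. Then $\operatorname{Coker}(f)\cong\operatorname{Coker}\bigl(X_2\to\operatorname{Coker}(f|_{X_1})\bigr)$. Assuming $\mathcal C$ is closed under the special cokernels, induction on the number of indecomposable summands of $X$ (using that $\mathcal C$ is idempotent split) reduces everything to $X=V_{a,b}$.

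\emph{Step 3: normalise the target.} Let $f:V_{a,b}\to Y=\bigoplus_i V_{c_i,d_i}$. I may discard the summands on which $f$ vanishes, since they split off from the cokernel. Now suppose $c_i\le c_j$ and $d_i\le d_j$ for some $i\neq j$. Then a map $h:V_{c_i,d_i}\to V_{c_j,d_j}$ exists. The composite of $h$ with $f_i$ is non-zero, because $a\le c_j\le b\le d_j$ holds. Applying the automorphism of $Y$ given by $\mathrm{id}-\lambda h$ (for a suitable scalar $\lambda$) kills the $j$-th component. Repeating, the remaining targets form an antichain $c_1<\dots<c_k$ and $d_1>\dots>d_k$, and the cokernel changes only up to isomorphism.

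\emph{Step 4: induction on $k$.} This is the main obstacle: the cokernel of the diagonal map into $k\ge3$ antichain summands must be built from one- and two-summand cokernels of objects already known to lie in $\mathcal C$. The expected answer is
$$\operatorname{Coker}(f)\cong V_{b+1,d_k}\oplus\bigoplus_{i=1}^{k-1}V_{c_{i+1},d_i}.$$
I would try to realise the cokernel of the map into all $k$ summands as the cokernel of a map into $V_{c_k,d_k}$ whose source is the summand $V_{b+1,d_{k-1}}$ of the previous cokernel. This previous cokernel is the cokernel of the map into the first $k-1$ summands, which lies in $\mathcal C$ by induction. Concretely, I would use the pushout square for $V_{a,b}\to Y'\oplus V_{c_k,d_k}$, where $Y'$ is the sum of the first $k-1$ summands, and identify the induced map $V_{b+1,d_{k-1}}\oplus V_{c_k,d_k}\to\cdots$ as a map of the two allowed shapes, possibly after first applying a one-summand cokernel. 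The delicate part is verifying, via the explicit formulas from Step~1, that the glued object is exactly the one displayed above. If this direct gluing fails, the fallback is to check the displayed decomposition by dimension vectors and exhibit each summand $V_{c_{i+1},d_i}$ directly as a two-term cokernel $V_{a,b}\to V_{c_i,d_i}\oplus V_{c_{i+1},d_{i+1}}$.
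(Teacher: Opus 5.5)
\paragraph{The gap is Step~4.} The paper gives no argument for this lemma; it is imported from \cite[Lemma~7]{KS}. Your Steps~2 and~3 are a sound self-contained reduction to a map $f=(f_1,\dots,f_k)\colon V_{a,b}\to\bigoplus_{i=1}^k V_{c_i,d_i}$ with $c_1<\dots<c_k\le b\le d_k<\dots<d_1$. Two small corrections:
\begin{enumerate}[(i)]
\item In Step~3, $hf_i\neq0$ holds because both $f_i$ and $h$ are nonzero at the vertex $b\in[c_j,d_i]$. The inequality $a\le c_j\le b\le d_j$ only gives $\Hom(V_{a,b},V_{c_j,d_j})\neq0$, which is not enough.
\item In Step~1, the image of $(f,g)$ is isomorphic to $V_{\min(c,c'),b}$, not $V_{\max(c,c'),b}$. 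Your dimension count is therefore wrong whenever $c\neq c'$.
\end{enumerate}
The real problem is that Step~4, which carries all the content of the ``if'' direction, is left as a hedged plan. The gluing plan cannot work as described. First, $\Hom(V_{b+1,d_{k-1}},V_{c_k,d_k})=0$ because $c_k\le b<b+1$. Second, the natural maps only exhibit $C=\operatorname{Coker}(f)$ as an extension $0\to V_{c_k,d_k}\to C\to C'\to 0$, where $C'$ is the cokernel for the first $k-1$ summands. Nothing in your hypothesis gives closure under extensions. Your fallback has the right shape, but ``check the displayed decomposition by dimension vectors'' verifies nothing. Dimension vectors do not determine isomorphism classes in $\operatorname{rep}(A_n)$: compare $V_{1,2}$ with $V_{1,1}\oplus V_{2,2}$.

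\paragraph{How to fill it.} You need an actual isomorphism. After rescaling bases, let $e^{(j)}_i$ be the standard basis vector of $V_{c_j,d_j}(i)$, and put $s_m=\sum_{j\le m}e^{(j)}$.
\begin{enumerate}[(i)]
\item At a vertex $i\in[c_1,b]$, the image of $f$ is the line spanned by $\sum_{c_j\le i}e^{(j)}_i$.
\item Hence the class of $s_m$ at vertex $d_m$ generates a submodule of $C$ isomorphic to $V_{c_{m+1},d_m}$ for $m<k$, and to $V_{b+1,d_k}$ for $m=k$. Indeed, the class vanishes exactly where $s_m$ coincides with the image vector.
\item At each vertex these classes are linearly independent, by triangularity modulo the image line. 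Their number equals $\dim C(i)$, so
\[
C\cong V_{b+1,d_k}\oplus\bigoplus_{m=1}^{k-1}V_{c_{m+1},d_m}.
\]
\end{enumerate}
Only now does the conclusion follow. The summand $V_{b+1,d_k}$ equals $\operatorname{Coker}(f_k)$. Each $V_{c_{m+1},d_m}$ is a direct summand of the cokernel of $V_{a,b}\to V_{c_m,d_m}\oplus V_{c_{m+1},d_{m+1}}$; it is not that whole cokernel, as you wrote. So it lies in $\mathcal C$ because $\mathcal C$ is idempotent split, and additivity gives $C\in\mathcal C$.
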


\section{Combinatorial counterpart}
\label{sec:coordinates}

Let $n$ be a non-negative integer.  Define 
$$
T(n)=\{(i,j)\in\mathbb Z^2\mid 0\leq i\leq j\leq n\}.
$$
For $n=0,1,2,3$, here is a visual depiction of $T(n)$:

\begin{center}
\begin{tikzpicture}[
  scale=.68,
  point/.style={circle,fill=blue,inner sep=1.1pt},
  axis/.style={->,black},
  boundary/.style={dashed,gray!60}
]
  \begin{scope}[xshift=0cm]
    \draw[axis] (-.2,0)--(.8,0);
    \draw[axis] (0,-.2)--(0,.8);
    \node[point] at (0,0) {};
    \node at (.35,1.05) {$T(0)$};
  \end{scope}

  \begin{scope}[xshift=2.4cm]
    \draw[axis] (-.2,0)--(1.65,0);
    \draw[axis] (0,-.2)--(0,1.65);
    \draw[boundary] (0,0)--(1,1);
    \foreach \i/\j in {0/0,0/1,1/1}{
      \node[point] at (\i,\j) {};
    }
    \node at (.5,1.9) {$T(1)$};
  \end{scope}

  \begin{scope}[xshift=5.7cm]
    \draw[axis] (-.2,0)--(2.65,0);
    \draw[axis] (0,-.2)--(0,2.65);
    \draw[boundary] (0,0)--(2,2);
    \foreach \i/\j in {0/0,0/1,1/1,0/2,1/2,2/2}{
      \node[point] at (\i,\j) {};
    }
    \node at (1,2.9) {$T(2)$};
  \end{scope}

  \begin{scope}[xshift=10cm]
    \draw[axis] (-.2,0)--(3.65,0);
    \draw[axis] (0,-.2)--(0,3.65);
    \draw[boundary] (0,0)--(3,3);
    \foreach \i/\j in {
      0/0,
      0/1,1/1,
      0/2,1/2,2/2,
      0/3,1/3,2/3,3/3
    }{
      \node[point] at (\i,\j) {};
    }
    \node at (1.5,3.9) {$T(3)$};
  \end{scope}
\end{tikzpicture}
\end{center}
For a subset $Y\subset T(n)$, we identify $Y$ with 
the vector $(Y_n,Y_{n-1},\ldots,Y_0)$, where $Y_j$ is the set 
of the first coordinates for those elements in $Y$
which belong to the row indexed by $j$:
$$
Y_j=\{i\mid(i,j)\in Y\}\subseteq\{0,\ldots,j\}.
$$
For example, we identify $Y=\{(0,1),(0,2),(2,2)\}\subset \T(2)$ 
with the vector $(\{0,2\},\{0\},\varnothing)$.

Let $R(n)$ be the set of all subsets $Y$ of $T(n)$ satisfying the following condition:
\begin{align}
	\tag{\(\star\)}
	\label{eq:admissible-star}
	\begin{gathered}
		\text{ for }\quad u=(i,j),\quad v=(k,l),\quad w=(k',l')\in Y,\\
		i\leq k\leq j,\, i\leq k'\leq j,\, j\leq l,\,j\leq l'
		\,\Longrightarrow\,
		(\max(k,k'),\max(l,l'))\in Y,\\
		\text{and, if }j<\min(l,l'),\text{ then also }
		(j+1,\min(l,l'))\in Y.
	\end{gathered}
\end{align}
A graphical illustration of Condition \((\star)\) is given in Figure~\ref{fig:closure-rule}. The presence of the blue points forces the red points to be included.
\begin{figure}[ht]
	\centering
	\begin{tikzpicture}[
		x=1.05cm,y=.85cm,
		chosen/.style={circle,fill=blue!70,inner sep=1.1pt},
		forced/.style={circle,fill=red!75,inner sep=1.1pt},
		gridpoint/.style={circle,fill=gray!35,inner sep=1.1pt},
		lab/.style={font=\small}
		]
		\draw[->,gray!70] (-.35,0)--(5.45,0) node[right] {$i$};
		\draw[->,gray!70] (0,-.35)--(0,5.45) node[above] {$j$};
		\draw[gray!55] (0,0)--(5,5);
		\foreach \j in {0,...,5}{
			\foreach \i in {0,...,\j}{\node[gridpoint] at (\i,\j) {};}
		}
		\node[chosen] at (0,3) {};
		\node[chosen] at (1,5) {};
		\node[chosen] at (2,4) {};
		\node[forced] at (2,5) {};
		\node[forced] at (4,4) {};
		\node[lab,left=3pt] at (0,3) {\resizebox{7mm}{!}{$u=(i,j)$}};
		\node[lab] at (1,5.2) {\resizebox{7mm}{!}{$v=(k,l)$}};
		\node[lab] at (2,4.2) {\resizebox{7mm}{!}{$w=(k',l')$}};
		\node[lab,text=red!70!black] at (2,4.8)
		{\resizebox{15mm}{!}{$(\max(k,k'),\max(l,l'))$}};
		\node[lab,right=3pt,text=red!70!black] at (3.9,4)
		{\resizebox{9mm}{!}{$(j+1,\min(l,l'))$}};
	\end{tikzpicture}
	\caption{}
	\label{fig:closure-rule}
\end{figure}

Set \(R(-1)=\{*\}\), where \(*\) represents the empty initial triangle, and let $r(n)$ be the cardinality of $R(n)$. Note that $r(-1)=1$.

\subsection{Examples}
We have $R(0)=\{\varnothing,\{(0,0)\}\}$, hence $r(0)=2$.
Further, we have

\resizebox{\textwidth}{!}{$
R(1)=\{\varnothing,\{(0,0)\},\{(0,1)\},\{(1,1)\},
\{(0,0),(1,1)\},\{(0,1),(1,1)\},T(1)\}.
$
}

The only missing subset is \(\{(0,0),(0,1)\}\).
Indeed, the presence of $(0,0)$ and $(0,1)$ forces $(1,1)$ 
according to Condition~$(\star)$, as shown in the figure below:
$$
\Tone{
	\coordinate (u) at (0,0);
	\coordinate (v) at (0,1);
	\coordinate (f) at (1,1);
	\fill[blue] (u) circle (2.4pt);
	\fill[blue] (v) circle (2.4pt);
	\fill[red] (f) circle (2.4pt);
	\node[below left] at (u) {\scriptsize \(u\)};
	\node[left] at (v) {\scriptsize \(v=w\)};
	\node[right] at (f) {\scriptsize \((1,1)\)};
	\draw[->, violet!70, thick] (0.2,1) to [bend left=12] (0.8,1);
}
$$
Consequently, $r(1)=7$.

\subsection{Connection}
For $1\le a\le b\le n+1$, there is a bijection between the set of indecomposable objects in $\operatorname{rep}(A_{n+1})$ and $T(n)$, given by:
$$
        V_{a,b}\longmapsto(a-1,b-1).
$$
In this way, $x=V_{i+1,j+1}$, $y=V_{k+1,l+1}$, and $z=V_{k'+1,l'+1}$ correspond to
$u=(i,j)$, $v=(k,l)$, and $w=(k',l')$, respectively.  The cokernel of $x\longrightarrow y\oplus z$ in Lemma~\ref{lem:cokernel-formula} corresponds to
$$
        (j+1,\min(l,l'))
        \quad\text{and}\quad
        (\max(k,k'),\max(l,l')).
$$
The first point occurs precisely when $j<\min(l,l')$.  Thus
Lemma~\ref{lem:cokernel-formula} translates exactly into Condition~$(\star)$.
\subsection{Adding one top layer}
\label{sec:adding-one-top-layer}

Suppose \(Y\in R(n-1)\). To extend \(Y\) to an element of
\(R(n)\), one only has to choose which points in the new top layer are added.
Write this choice as
$$
A\subseteq\{0,\ldots,n\},
\qquad
Y_A=Y\cup\{(e,n)\mid e\in A\}.
$$
Let $S(Y)$ denote the set of all $A\subseteq\{0,\ldots,n\}$
such that $Y_A\in R(n)$ and define an equivalence relation on $R(n-1)$ by
$$
Y\sim_{n-1}Y'\quad\Longleftrightarrow\quad S(Y)=S(Y').
$$
Let \(\sim_{-1}\) be the unique equivalence relation on \(R(-1)\).
Write \([Y]_{n-1}\) for the $\sim_{n-1}$-equivalence class of $Y$.

Let $Z\in R(n)$ and let $Y$ be its restriction to $R(n-1)$.
Then $Z=Y_A$, where $A=Y_n$. In particular, all elements of 
$R(n)$ can be constructed from elements of $R(n-1)$
as described above. Now, given $Y$, let us describe 
the restrictions on $A$ that are forced by Condition~$(\star)$.

\begin{lemma}
	\label{lem:top-layer-extension}
	Let \(Y\in R(n-1)\)
	and $A\subseteq\{0,\ldots,n\}$. Then \(Y_A\in R(n)\) if and
	only if the following two conditions are satisfied.
\begin{enumerate}
\item\label{cond1-1} 
If \((i,j)\in Y\) and \(A\cap\{i,\ldots,j\}\neq\varnothing\), then
\(j+1\in A\).
\item\label{cond1-2} 
If \((i,j),(k,l)\in Y\), with \(i\leq k\leq j\leq l\), and
\(e\in A\cap\{i,\ldots,k-1\}\) then \(k\in A\).
\end{enumerate}
\end{lemma}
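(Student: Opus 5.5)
We verify Condition~$(\star)$ for $Y_A$ by sorting the triples $u,v,w\in Y_A$ according to how many of them lie in the top row $n$. If none does, the triple lies in $Y$ and $(\star)$ holds because $Y\in R(n-1)$. If $u=(i,j)$ lies in the top row, then $j=n$, so $l=l'=n$ and $k=k'=n$. The forced point is then $(n,n)=v$, which is already present, and the second clause never applies since $j=\min(l,l')$. So we only need the case $j\le n-1$, that is $u\in Y$, with at least one of $v,w$ in row $n$. Without loss of generality we take $l'\le l$. The top-row member is then $v$, or both $v,w$ lie in the top row.

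\emph{Case 1: $l=l'=n$}, with $v=(k,n)$ and $w=(k',n)$, $k,k'\in A\cap\{i,\dots,j\}$. The first forced point $(\max(k,k'),n)$ is $v$ or $w$, so it is automatic. The second forced point $(j+1,n)$ is needed because $j<n$. Taking $v=w$, this is exactly the requirement in (\ref{cond1-1}), so (\ref{cond1-1}) is necessary. Conversely, (\ref{cond1-1}) handles every such triple.

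\emph{Case 2: $l=n>l'$}, with $v=(k,n)$, $k\in A\cap\{i,\dots,j\}$, and $w=(k',l')\in Y$, $i\le k'\le j\le l'$. The second forced point is $(j+1,l')$ when $j<l'$. We want to obtain it from $Y\in R(n-1)$ alone, using the triple $u,w,w$: this forces $(k',l')$ and, if $j<l'$, also $(j+1,l')$. So this point costs nothing. The first forced point is $(\max(k,k'),n)$. If $k\ge k'$ it equals $v$. If $k<k'$ we need $k'\in A$. Setting $e=k$ and renaming $(k',l')$ as $(k,l)$, this is precisely (\ref{cond1-2}), with $e\in A\cap\{i,\dots,k-1\}$ and $i\le k\le j\le l$.

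This gives necessity of both conditions: each is the instance of $(\star)$ from the triple $(u,v,v)$, respectively $(u,(e,n),(k,l))$. Sufficiency follows because the case analysis exhausts all triples. The main obstacle is the bookkeeping in Case~2: checking that the second clause $(j+1,\min(l,l'))$ never imposes a new constraint on $A$. This holds because $\min(l,l')=l'<n$, so the point is produced by applying $(\star)$ to $u,w,w$ inside $Y$. We must also confirm that this degenerate choice $v=w$ is allowed in $(\star)$, as the paper's example with $v=w$ indicates.
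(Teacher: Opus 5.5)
Your proof is correct and takes the same approach as the paper. You check $(\star)$ only for triples meeting the top row, obtaining the first condition from the clause $(j+1,\min(l,l'))$ and the second from the clause $(\max(k,k'),\max(l,l'))$, and you supply details the paper calls immediate, notably that in the mixed case the point $(j+1,l')$ is already supplied inside $Y$ by the triple $u,w,w$. One small slip: when $u=(i,n)$ you only get $l=l'=n$, not $k=k'=n$; however, the forced point $(\max(k,k'),n)$ is still $v$ or $w$, and the second clause is vacuous since $j=\min(l,l')$, so your conclusion stands.
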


\begin{proof}
We only need to check Condition~$(\star)$ in the situation 
when one of the initial points is in the top row. 
In this situation, we immediately see  that
Condition~\eqref{cond1-1} corresponds to the bottom row
of Condition~$(\star)$ while Condition~\eqref{cond1-2} 
corresponds to the middle row of Condition~$(\star)$.
\end{proof}

\subsection{Example}
\label{sec:example-t1-to-t2}

Let $Y=\{(0,0),(0,1),(1,1)\}\in R(1).$
To extend this $Y$ to an element of \(R(2)\), we need to choose
some $A\subseteq\{0,1,2\}$  to get 
$Y_A=Y\cup\{(e,2)\mid e\in A\}$.

If \(0\in A\), the point \((0,0)\in Y\) and the new point 
\((0,2)\) {\color{red}force} \((1,2)\), so \(1\in A\). 
The pair \((0,1),(1,1)\in Y\), together with the new
point \((0,2)\), also {\color{blue}force} \((1,2)\). 
The point \((0,1)\in Y\) and the new point \((0,2)\) 
{\color{teal}force} \((2,2)\), so \(2\in A\). Similarly, if 
\(1\in A\), then \((1,1)\in Y\) {\color{cyan}forces} \(2\in A\). 
This forcing is illustrated in the following  picture:

\begin{center}
\begin{tikzpicture}[
	scale=1.05,
	old/.style={gray!45},
	choice/.style={orange!70},
	forced/.style={orange!75},
	lab/.style={font=\scriptsize}
	]
	\draw[->, gray] (-.15,0) -- (2.75,0);
	\draw[->, gray] (0,-.15) -- (0,2.75);
	\node[right] at (2.75,0) {\(i\)};
	\node[above] at (0,2.75) {\(j\)};
	\draw[dashed, gray!55] (0,0) -- (2,2);
	\draw[densely dashed, gray!60] (0,2) -- (2,2);
	\node[right, orange] at (2.25,2) {new top layer};
	
	\foreach \x in {0,1,2} {
		\node[below, gray] at (\x,-.08) {\scriptsize \(\x\)};
	}
	\foreach \y in {0,1,2} {
		\node[left, gray] at (-.08,\y) {\scriptsize \(\y\)};
	}
	
	\foreach \x/\y in {0/0,0/1,1/1} {
		\fill[old] (\x,\y) circle (2.4pt);
	}
	\foreach \x in {0,1,2} {
		\fill[gray!18] (\x,2) circle (3.3pt);
	}
	
	\fill[choice] (0,2) circle (2.6pt);
	\fill[forced] (1,2) circle (2.6pt);
	\fill[forced] (2,2) circle (2.6pt);
\draw[->, red!70, thick, shorten >=4pt, shorten <=4pt] (0,2.1) to [bend left=12] (1,2.1);
\draw[->, blue!70, thick, shorten >=4pt, shorten <=4pt] (0,1.9) to [bend right=12] (1,1.9);
\draw[->, cyan!70, thick, shorten >=4pt, shorten <=4pt] (1,2.1) to [bend left=12] (2,2.1);
\draw[->, teal!70, thick, shorten >=4pt, shorten <=4pt] (0,2.3) to [bend left=20] (2,2.3);
\end{tikzpicture}
\end{center}

This implies that admissible choices of \(A\) over our \(Y\) are 
$$
\varnothing,\qquad
\{2\},\qquad
\{1,2\},\qquad
\{0,1,2\}.
$$
These give four possible extensions of  \(Y\) to an element of \(T(2)\). 
Analogous considerations of all seven elements of \(R(1)\) give 
the total of \(37\) elements in \(R(2)\).

\subsection{Consequences}

The above procedure allows one implicit recursive computations. 
Starting from the empty initial triangle, the
initial family is \(\{\varnothing\}\). Given
\(Y\subseteq T(n-1)\), one can list all 
the subsets \(A\subseteq\{0,\ldots,n\}\) satisfying
the two conditions above. Each such pair gives rise to some
\(Y_A\subseteq T(n)\).

This is equivalent to the direct (brute force) mask enumeration, but 
is more efficient. The direct enumeration for \(T(n)\) checks 
\(2^{(n+1)(n+2)/2}\) subsets. Our recurrence only extends already 
closed subsets and tests \(2^{n+1}\) possible top layers for each
of them.

With this recurrence, one can compute
a few initial values for the sequence $r(n)$, see Figure~\ref{fig1}.
From these values it looks reasonable to guess that 
the sequence $r(n)$ grows as $C\cdot \alpha^n\cdot n!\cdot n^\beta$,
for some constant $C$, where we have $\alpha\in (1.7,1.8)$ and 
$\beta\in (1.3,1.4)$. A bit far fetched but 
interesting speculative observation would be 
that $\sqrt{\pi}\in (1.7,1.8)$.

\begin{figure}
\begin{displaymath}
\begin{array}{c||c}
m&r(m)\\ \hline \hline
0&2\\
1&7\\
2&37\\
3&265\\
4&2396\\
5&26118\\
6&332723\\
7&4845044\\
8&79327314\\
9&1441840473\\
10&28797904948\\
11&626815406988\\
12&14765197821831\\
13&374200770218086\\
14&10151746125999166\\
\end{array}
\end{displaymath}
\caption{Initial values of $r(n)$ up to $n=14$}
\label{fig1}
\end{figure}

\subsection{Implication graph}

For $Y\in R(n-1)$, define a directed graph
\(G_Y=(V_Y,E_Y)\), which we call the
{\em implication graph of $Y$}, as follows:
\begin{itemize}
	\item The vertex set is \(V_Y=\{0,1,\ldots,n\}\).
	\item The arrow set is
\begin{equation}\label{graph:arrows}
	\begin{aligned}
		E_Y={}&\{(e,j+1)\mid (i,j)\in Y,\ i\leq e\leq j\}\\
		&\mathbin{\cup}
		\{(e,k)\mid (i,j),(k,l)\in Y,\
		\ i\leq k\leq j\leq l,\ i\leq e<k\}.
	\end{aligned}
\end{equation}
\end{itemize}
Note that the target of every arrow in $G_Y$ is strictly greater than its source.
We emphasize the following property of $G_Y$ which follows directly
from the definition:
\begin{equation}\label{eq:graphprop}
\text{If } (a,b)\in E_Y, 
\text{ then } (a',b)\in E_Y
\text{ for all } a\leq a'<b.
\end{equation}
By Lemma~\ref{lem:top-layer-extension}, 
we have \(A\in S(Y)\) if and only if $A$ 
is closed under $G_Y$ in the following sense: if \(e\in A\) and \(e\to f\)
in $G_y$, then \(f\in A\). This means that \(S(Y)\) is the 
set of all subsets of $\{0,1,\dots,n\}$ closed under \(G_Y\) in the above sense.

For two directed graphs \(G,H\) on the same set of vertices, we write
$G\sim H$ if and only if the following condition is satisfied:
for every pair $x$ and $y$ of vertices, there is a directed 
path from $x$ to $y$ in $G$ if and only if there is a directed
path from $x$ to $y$ in $H$. Then $\sim$ is an equivalence relation. 
Denote by \([G_Y]\) the equivalence class of $G_Y$ under \(\sim\).
Also, we set $\mathcal S_n=\{[G_Y]\mid Y\in R(n-1)\}$. Write 
\(i\preceq_Y j\) if \(i=j\) or there is a directed path from \(i\) to
\(j\) in \(G_Y\).

From Lemma~\ref{lem:top-layer-extension}, it follows that,
for $Y,Y'\in R(n-1)$, we have $Y\sim_{n-1} Y'$
if and only if $G_Y\sim G_{Y'}$.
Viewing \(R(n-1)\) as the disjoint union of $\sim_{n-1}$-equivalence 
classes, we obtain
\begin{align}
	\label{eq:state-count-by-implication-graphs}
	\vert R(n)\vert=
	\sum_{[G]\in\mathcal S_n}
	N_{n-1}([G])\cdot
	\vert\{A\subseteq\{0,\ldots,n\}\mid A\text{ is closed under }G\}\vert,
\end{align}
where
$  N_{n-1}([G])=\vert\{Y\in R(n-1)\mid G_Y\sim G\}\vert$.

\subsection{Example: \(n=1\)}

We have the following implication graphs for the two
elements in $R(1)$:

\begin{center}
	\begin{tikzpicture}[
		vertex/.style={circle, fill=black, minimum size=3pt, inner sep=0pt},
		every label/.style={font=\scriptsize, label distance=1pt},
		edge/.style={->, black, semithick},
		lab/.style={font=\scriptsize, align=center}
		]
		\begin{scope}[xshift=0cm,name prefix=n1-g1-]
			\node[vertex, label=below:\(0\)] (v0) at (0,0) {};
			\node[vertex, label=below:\(1\)] (v1) at (1,0) {};
			\node[lab] at (.5,-.75) {\(Y=\varnothing\)};
		\end{scope}
		
		\begin{scope}[xshift=3cm,name prefix=n1-g2-]
			\node[vertex, label=below:\(0\)] (v0) at (0,0) {};
			\node[vertex, label=below:\(1\)] (v1) at (1,0) {};
			\draw[edge] (v0) -- (v1);
			\node[lab] at (.5,-.75) {\(Y=\{(0,0)\}\)};
		\end{scope}
	\end{tikzpicture}
\end{center}

\subsection{Example: \(n=2\)}

We have the following implication graphs for the seven elements in $R(2)$:

\begin{center}
\resizebox{\textwidth}{!}{
\begin{tikzpicture}[
		vertex/.style={circle, fill=black, minimum size=3pt, inner sep=0pt},
		every label/.style={font=\scriptsize, label distance=1pt},
		edge/.style={->, black, semithick},
		transitive/.style={->, dashed, black, semithick},
		lab/.style={font=\scriptsize, align=center}
		]
		\begin{scope}[xshift=0cm,name prefix=g1-]
			\node[vertex, label=below:\(0\)] (v0) at (0,0) {};
			\node[vertex, label=below:\(1\)] (v1) at (1,0) {};
			\node[vertex, label=below:\(2\)] (v2) at (2,0) {};
			\node[lab] at (1,-.75) {\(Y=\varnothing\)};
		\end{scope}
		
		\begin{scope}[xshift=3.7cm,name prefix=g2-]
			\node[vertex, label=below:\(0\)] (v0) at (0,0) {};
			\node[vertex, label=below:\(1\)] (v1) at (1,0) {};
			\node[vertex, label=below:\(2\)] (v2) at (2,0) {};
			\draw[edge] (v0) -- (v1);
			\node[lab] at (1,-.75) {\(Y=\{(0,0)\}\)};
		\end{scope}
		
		\begin{scope}[xshift=7.4cm,name prefix=g3-]
			\node[vertex, label=below:\(0\)] (v0) at (0,0) {};
			\node[vertex, label=below:\(1\)] (v1) at (1,0) {};
			\node[vertex, label=below:\(2\)] (v2) at (2,0) {};
			\draw[edge] (0,0.2) to[bend left=35] (2,0.2);
			\draw[edge] (v1) -- (v2);
			\node[lab] at (1,-.75) {\(Y=\{(0,1)\}\)};
		\end{scope}
		
		\begin{scope}[xshift=11.1cm,name prefix=g4-]
			\node[vertex, label=below:\(0\)] (v0) at (0,0) {};
			\node[vertex, label=below:\(1\)] (v1) at (1,0) {};
			\node[vertex, label=below:\(2\)] (v2) at (2,0) {};
			\draw[edge] (v1) -- (v2);
			\node[lab] at (1,-.75) {\(Y=\{(1,1)\}\)};
		\end{scope}
		
		\begin{scope}[xshift=1.85cm,yshift=-2.1cm,name prefix=g5-]
			\node[vertex, label=below:\(0\)] (v0) at (0,0) {};
			\node[vertex, label=below:\(1\)] (v1) at (1,0) {};
			\node[vertex, label=below:\(2\)] (v2) at (2,0) {};
			\draw[edge] (v0) -- (v1);
			\draw[edge] (v1) -- (v2);
			\node[lab] at (1,-.75) {\(Y=\{(0,0),(1,1)\}\)};
		\end{scope}
		
		\begin{scope}[xshift=5.55cm,yshift=-2.1cm,name prefix=g6-]
			\node[vertex, label=below:\(0\)] (v0) at (0,0) {};
			\node[vertex, label=below:\(1\)] (v1) at (1,0) {};
			\node[vertex, label=below:\(2\)] (v2) at (2,0) {};
			\draw[edge] (v0) -- (v1);
			\draw[edge] (0,0.2) to[bend left=35] (2,0.2);
			\draw[edge] (v1) -- (v2);
			\node[lab] at (1,-.75) {\(Y=\{(0,1),(1,1)\}\)};
		\end{scope}
		
		\begin{scope}[xshift=9.25cm,yshift=-2.1cm,name prefix=g7-]
			\node[vertex, label=below:\(0\)] (v0) at (0,0) {};
			\node[vertex, label=below:\(1\)] (v1) at (1,0) {};
			\node[vertex, label=below:\(2\)] (v2) at (2,0) {};
			\draw[edge] (v0) -- (v1);
			\draw[edge] (0,0.2) to [bend left=35] (2,0.2);
			\draw[edge] (v1) -- (v2);
			\node[lab] at (1,-.75) {\(Y=T(1)\)};
		\end{scope}
	\end{tikzpicture}
}
\end{center}

Only six out of these seven graphs are different
(the last two graphs coincide).
Furthermore, we only have five $\sim$-equivalence classes
as the last three graphs are equivalent.

\section{Connection to Catalan numbers}\label{sec:catalan}

Recall that the Catalan numbers are defined  by
$C_n=\frac{1}{n+1}\binom{2n}{n}$, for $n\geq 0$.
%
The goal of this section is to prove that the implication states at level
\(m\) are counted by \(C_{m+2}\). To this end, we
introduce certain threshold vectors and show that 
they are connected to one of the classical realizations of
Catalan numbers.

\subsection{Threshold vectors}

For \(Y\in R(n-1)\), define the \(n+1\)-dimensional 
{\em threshold vector}
$$
t(Y)=(t_0(Y),\ldots,t_{n}(Y)),
$$
with $t_j(Y)=\min\{i\mid i\preceq_Y j\}$. Write $t_j=t_j(Y)$. 
We begin with the following lemmata.

\begin{lemma}
	\label{lem:reachability-interval}
	For \(0\leq i\leq j\leq n\), we have
	$$
		i\preceq_Y j
		\quad\Longleftrightarrow\quad
		t_j(Y)\leq i\leq j.
	$$
\end{lemma}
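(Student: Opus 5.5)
The forward implication is immediate. If $i\preceq_Y j$, then $i\le j$, because every arrow of $G_Y$ increases the label. Also $t_j(Y)\le i$ holds by the definition of $t_j(Y)$ as a minimum. So the whole content is the converse: if $t_j\le i\le j$, then $i\preceq_Y j$. In other words, the set of vertices from which $j$ is reachable is an interval ending at $j$.

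For the converse I would prove the stronger claim that, whenever $a\preceq_Y j$ and $a\le a'\le j$, also $a'\preceq_Y j$. Applying this with $a=t_j$ gives the lemma. The proof is by induction on the length of a shortest directed path $a=v_0\to v_1\to\cdots\to v_r=j$. For $r=0$ we have $a=j$, so $a'=j$ and there is nothing to prove. For $r\ge 1$ I consider the first arrow $a\to v_1$ and split according to the position of $a'$.

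\emph{Case $a'<v_1$.} Since $a\le a'<v_1$ and $(a,v_1)\in E_Y$, property \eqref{eq:graphprop} gives $(a',v_1)\in E_Y$. Concatenating with the path from $v_1$ to $j$ shows $a'\preceq_Y j$.

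\emph{Case $a'=v_1$.} Here $a'\preceq_Y j$ directly.

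\emph{Case $v_1<a'\le j$.} Now $v_1\preceq_Y j$ via a path of length $r-1$, and $v_1\le a'\le j$. The induction hypothesis, applied to $v_1$ in place of $a$, gives $a'\preceq_Y j$.

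This completes the induction, and hence the proof. The only substantive input is property \eqref{eq:graphprop}, which the paper has already noted follows from the definition \eqref{graph:arrows}. I therefore expect no real obstacle. The one point needing care is to set up the induction so that the hypothesis is applied to a strictly shorter path; inducting on path length rather than on $j-a$ handles this cleanly.
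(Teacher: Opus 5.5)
Your proof is correct and is essentially the paper's argument. The paper takes a path $t_j=x_0\to\cdots\to x_r=j$, picks the arrow $x_{h-1}\to x_h$ with $x_{h-1}\le i<x_h$, and uses property \eqref{eq:graphprop} to replace it by $i\to x_h$; your induction on path length simply locates that same straddling arrow one step at a time.
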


\begin{proof}	
Assume the path from $t_j(Y)$ to $j$ is 
$$
t_j(Y)=x_0\to x_1\to\cdots\to x_r=j.
$$
If \(t_j(Y)\leq i<j\), choose \(h\) such that
\(x_{h-1}\leq i<x_h\). Since
$x_{h-1}\to x_h$ is an arrow, by Property~\eqref{eq:graphprop},
we have the arrow \(i\to x_h\). Combining the arrow \(i\to x_h\) with
the remaining part of the path gives \(i\preceq_Y j\). 
The converse follows from the minimality of \(t_j(Y)\).
\end{proof}

\begin{lemma}
	\label{lem:threshold-bracket-condition}
	The threshold vector \(t(Y)=(t_0(Y),\ldots,t_{n}(Y))\) 
	has the following two properties:
	\begin{align}
		\label{eq:threshold-bracket-condition}
		0\leq t_j(Y)\leq j; \text{ and }\,\,
		(t_j(Y)\leq i\leq j
		\,\Longrightarrow\,
		t_j(Y)\leq t_i(Y)).
	\end{align}
\end{lemma}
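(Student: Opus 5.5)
The first property is immediate. Since $j\preceq_Y j$ holds by definition, the set $\{i\mid i\preceq_Y j\}$ is nonempty and $t_j(Y)\le j$. Every vertex of $G_Y$ lies in $\{0,\dots,n\}$, so $t_j(Y)\ge 0$. We also use that every arrow in $G_Y$ goes from a smaller to a strictly larger vertex. Hence $i\preceq_Y j$ forces $i\le j$, which is consistent with Lemma~\ref{lem:reachability-interval}.

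For the second property, take $i$ with $t_j(Y)\le i\le j$. We must show $t_j(Y)\le t_i(Y)$. Equivalently, every vertex $x$ with $x\preceq_Y i$ satisfies $x\ge t_j(Y)$. The plan is to use transitivity of reachability. By Lemma~\ref{lem:reachability-interval}, the condition $t_j(Y)\le i\le j$ gives $i\preceq_Y j$. By definition of $t_i(Y)$, we have $t_i(Y)\preceq_Y i$. Concatenating the two paths, or using equality when one of them is trivial, gives $t_i(Y)\preceq_Y j$. Minimality in the definition of $t_j(Y)$ then yields $t_j(Y)\le t_i(Y)$.

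All steps are routine. The only point needing care is that Lemma~\ref{lem:reachability-interval} supplies the implication from $t_j(Y)\le i$ to $i\preceq_Y j$. That implication relies on Property~\eqref{eq:graphprop} and is already established, so no real obstacle remains. One could also note $t_i(Y)\le i$ to see that the resulting chain $t_j(Y)\le t_i(Y)\le i\le j$ is consistent, but this is not needed for the claim.
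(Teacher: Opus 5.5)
Your proof is correct and matches the paper's argument essentially verbatim. Both get $0\le t_j(Y)\le j$ directly from the definition. For the second property, Lemma~\ref{lem:reachability-interval} gives $i\preceq_Y j$, you chain this with $t_i(Y)\preceq_Y i$, and minimality of $t_j(Y)$ yields $t_j(Y)\le t_i(Y)$.
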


\begin{proof}
The property $0\leq t_j(Y)\leq j$ follows
directly from the definitions.
For the second properly, assume
	\(t_j(Y)\leq i\leq j\). Then \(i\preceq_Y j\) by
	Lemma~\ref{lem:reachability-interval}. Since
	\(t_i(Y)\preceq_Y i\preceq_Y j\), the minimality of \(t_j(Y)\) gives
the inequality	\(t_j(Y)\leq t_i(Y)\).
\end{proof}

\begin{lemma}
	\label{lem:threshold-admissible-row}
	A subset \(A\subseteq\{0,\ldots,n\}\) belongs to \(S(Y)\) if and only if
	$$
		(i\in A\,\,\text{ and }\,\, t_j\leq i\leq j)
		\quad\Longrightarrow\quad j\in A.
	$$
\end{lemma}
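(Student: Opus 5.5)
The plan is to combine two earlier facts. First, $S(Y)$ is exactly the set of subsets of $\{0,\ldots,n\}$ closed under the arrows of $G_Y$. Second, Lemma~\ref{lem:reachability-interval} describes reachability in $G_Y$ through the threshold vector. The argument goes through the intermediate statement that $A$ is closed under arrows of $G_Y$ if and only if $A$ is upward closed under $\preceq_Y$, meaning that $i\in A$ and $i\preceq_Y j$ together imply $j\in A$. Once this is established, Lemma~\ref{lem:reachability-interval} rewrites $i\preceq_Y j$ as $t_j\le i\le j$, which gives the stated criterion.

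For the direction ($\Leftarrow$), assume $A$ satisfies the displayed implication. Let $e\in A$ and let $e\to f$ be an arrow of $G_Y$. Then $e\preceq_Y f$ by definition, and $e<f$ because every arrow increases its endpoint. Lemma~\ref{lem:reachability-interval} then gives $t_f\le e\le f$, so the hypothesis yields $f\in A$. Hence $A$ is closed under $G_Y$, and $A\in S(Y)$ by Lemma~\ref{lem:top-layer-extension} as reformulated via the implication graph.

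For the direction ($\Rightarrow$), assume $A\in S(Y)$, so $A$ is closed under every arrow of $G_Y$. Take $i\in A$ with $t_j\le i\le j$. If $i=j$ there is nothing to prove. Otherwise Lemma~\ref{lem:reachability-interval} gives $i\preceq_Y j$, so there is a directed path $i=x_0\to x_1\to\cdots\to x_r=j$. Induction along the path, using closure under each single arrow, gives $x_h\in A$ for all $h$, and in particular $j\in A$.

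There is no real obstacle here. The only point needing care is that $S(Y)$ is phrased through single arrows, whereas the displayed condition is phrased through $\preceq_Y$, a transitive relation. Closure under arrows and closure under reachability are equivalent by a simple induction on path length. Lemma~\ref{lem:reachability-interval}, whose proof relied on Property~\eqref{eq:graphprop}, is exactly what converts reachability into the interval condition $t_j\le i\le j$.
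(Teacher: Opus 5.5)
Your proof is correct and takes the same route as the paper: you identify $S(Y)$ with the subsets closed under the arrows of $G_Y$, pass to closure under the reachability relation $\preceq_Y$, and then rewrite $i\preceq_Y j$ as $t_j\le i\le j$ using Lemma~\ref{lem:reachability-interval}. The only difference is that you write out the induction along paths, a step the paper compresses into ``this means that''.
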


\begin{proof}
A subset \(A\subseteq\{0,\ldots,n\}\) belongs to \(S(Y)\)
if and only if $A$ is closed under \(G_Y\). This means that 
$i\in A$ and $i\preceq_Y j$ has to imply $j\in A$.
By Lemma~\ref{lem:reachability-interval}, 
the condition \(i\preceq_Y j\) is equivalent to
the condition \(t_j\leq i\leq j\). This proves our lemma.
\end{proof}

For \(A\neq\varnothing\), denote by $a$ the minimum element of
\(A\) and further set 
$\displaystyle\mu_t(a)=\min\{t_j\,\vert\, a\leq j\leq n\}$.
Note that here we take all $j$ between $a$ and $n$,
not only those which are in $A$.

\begin{lemma}
	\label{lem:threshold-transition}
Let \(A\in S(Y)\).
\begin{enumerate}[$($a$)$]
\item\label{claima-2} If \(A\neq \varnothing\), then 
$t_j(Y_A)=
		\begin{cases}
			\mu_t(a),&\text{ if }j\in A\text{ or }j=n+1,\\
			t_j,&\text{ if }j\notin A\text{ and }j\leq n.
		\end{cases}		$
\item\label{claimb-2} If \(A=\varnothing\), then
	$t(Y_A)=(t_0,\ldots,t_{n},n+1)$.
\end{enumerate}
\end{lemma}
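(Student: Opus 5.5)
The plan is to write out the arrow set of $G_{Y_A}$ explicitly, split it into the old arrows and the new ones, and then bound $t_j(Y_A)$ from both sides by tracking where a directed path first enters $A\cup\{n+1\}$.

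\textbf{Step 1: the new arrows.} By \eqref{graph:arrows} applied to $Y_A\in R(n)$, the arrows of $G_{Y_A}$ on $\{0,\ldots,n+1\}$ are the arrows of $E_Y$ together with three new families, coming from pairs involving a top point $(e,n)$ with $e\in A$:
\begin{enumerate}[(N1)]
\item $e'\to n+1$ for $e\le e'\le n$;
\item $e'\to k$ for $k\in A$ and $e\le e'<k$, coming from the pair $(e,n),(k,n)$;
\item $e'\to e$ for $i\le e'<e$, whenever $(i,j)\in Y$ with $i\le e\le j$.
\end{enumerate}
Every new arrow therefore has its target in $A\cup\{n+1\}$. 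If $A=\varnothing$, there are no new arrows and $n+1$ is isolated, so part~(\ref{claimb-2}) is immediate. Since $Y_A\in R(n)$, Property~\eqref{eq:graphprop} and Lemma~\ref{lem:reachability-interval} also hold for $G_{Y_A}$.

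\textbf{Step 2: the case $j\notin A$, $j\le n$.} Take a path $x_0\to\cdots\to j$ in $G_{Y_A}$. Suppose some vertex of the path lies in $A$, and take the last such vertex. Every arrow after it has its target outside $A\cup\{n+1\}$, so it is an old arrow. Then $A\in S(Y)$, i.e. closure of $A$ under $G_Y$, forces $j\in A$, which is a contradiction. Hence the path avoids $A$, so all of its arrows are old, and the reachers of $j$ are the same in both graphs. This gives $t_j(Y_A)=t_j$.

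\textbf{Step 3: the lower bound for $j\in A\cup\{n+1\}$.} Let $x_0$ reach $j$, and let $y$ be the first vertex of the path lying in $A\cup\{n+1\}$; the part of the path before $y$ consists of old arrows. I will show $x_0\ge\mu_t(a)$ by cases on the arrow entering $y$.
\begin{itemize}
\item If it is old, then $y\in A$ and $y\ge a$, so $x_0\ge t_y\ge\mu_t(a)$.
\item If it is of type (N1) or (N2) with source $e'\ge a$, then $x_0$ reaches $e'$ along old arrows, so $x_0\ge t_{e'}\ge\mu_t(a)$.
\item If it is of type (N3), say $e'\to e$ with $(i,j')\in Y$, then $i\le e'\le j'$ gives an old arrow $e'\to j'+1$. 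Also $e\to j'+1$ is an old arrow, so closure of $A$ gives $j'+1\in A$, and in particular $j'+1\ge a$. Hence $x_0\ge t_{j'+1}\ge\mu_t(a)$.
\end{itemize}

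\textbf{Step 4: the upper bound, which is the main obstacle.} Choose $j_0\ge a$ with $t_{j_0}=\mu_t(a)$, and take an old path $\mu_t(a)=x_0\to\cdots\to j_0$. If $\mu_t(a)=a$, arrows (N1) and (N2) from $a$ reach every vertex of $A\cup\{n+1\}$ and we are done. Otherwise, let $x_h$ be the first path vertex with $x_h\ge a$, so $x_{h-1}<a\le x_h$. Property~\eqref{eq:graphprop} gives an old arrow $a\to x_h$ (or $x_h=a$), hence $x_h\in A$. Now fix a target $j\in A$.
\begin{itemize}
\item If $x_h\le j$, use (N2) from $x_h$ (or $x_h=j$).
\item If $a\le j<x_h$, the old arrow $x_{h-1}\to x_h$ comes from a point $(i,j')\in Y$ with $i\le x_{h-1}$ and $j'\ge x_h-1\ge j$. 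This holds in both the first and the second type of arrow in \eqref{graph:arrows}. Then $i\le j\le j'$, so (N3) supplies the arrow $x_{h-1}\to j$.
\end{itemize}
Finally, $n+1$ is reached via (N1) from $x_h$. Combined with Step 3, this yields $t_j(Y_A)=\mu_t(a)$ for $j\in A\cup\{n+1\}$.
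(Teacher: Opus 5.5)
Your proof is correct and follows essentially the same route as the paper. All new arrows land in $A\cup\{n+1\}$; closure of $A$ under $G_Y$ gives $t_j(Y_A)=t_j$ off $A$; the upper bound comes from turning the arrow of an old path $\mu_t(a)\rightsquigarrow h$ that crosses $a$ into an (N3)-arrow; and the lower bound uses that an (N3)-arrow $e'\to e$ from $(i,j')\in Y$ yields the old arrow $e'\to j'+1$ with $j'+1\geq a$. Your first-entry argument in Step~3 is a direct, slightly cleaner version of the paper's maximal-counterexample argument and handles $j=n+1$ together with $j\in A$; it should also state the trivial case $x_0\in A\cup\{n+1\}$, where either $x_0\geq t_{x_0}\geq\mu_t(a)$ or $x_0=n+1$.
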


\begin{proof}
Assume $A=\varnothing$. In this case 
$G_{Y_A}$ and $G_Y$ have exactly the same arrows
by \eqref{graph:arrows}. In particular,
the vertex $n+1$ is isolated.
This  implies Claim~\eqref{claimb-2}.

Now assume $A\neq \varnothing$.
From \eqref{graph:arrows}, we see that 
$G_{Y_A}$ inherits all arrows from $G_Y$.
In particular, we have $t_j(Y_A)\leq t_j$,
for all $j\leq n$. However, $G_{Y_A}$
also has additional edges, for instance, those 
which correspond to the points $(x,n)\in Y_A$,
where $x\in A$. Each such $(x,n)$ gives the
arrows $(y,n+1)$ in $G_{Y_A}$, for all
$x\leq y\leq n$. These are exactly the arrows
in $G_{Y_A}$ that terminate at $n+1$.
In particular, the minimal $y$ for which there is
an arrow $(y,n+1)$ in $G_{Y_A}$ is, in fact, $y=a$.

Now, choose $h\geq a$  such that $t_h=\mu_t(a)\leq a$.
By definition, there is a path from $\mu_t(a)$ to $h$ in  $G_Y$. 
Since $h\geq a$, by the previous paragraph, there is an arrow from
$h$ to $n+1$ in $G_{Y_A}$ and hence $t_{n+1}(Y_A)\leq \mu_t(a)$.

Next we claim that there is a path from $\mu_t(a)$ to $a$
in $G_{Y_A}$. This is obvious if $\mu_t(a)=a$.
If $\mu_t(a)<a$, consider a path from $\mu_t(a)$ to $h>a$
in $G_Y$. If this path does not go through $a$ at some point, it
has an arrow from $x$ to $y$, for $x<a<y$. 
By \eqref{graph:arrows}, there are two possibilities.

{\bf Case~1.} There is $(i,y-1)\in Y$ with $i\leq x$.
In this case, we consider the points $(i,y-1),(a,n)\in Y_A$. We have 
$i\leq x<a$, then $a\leq y-1$ and $(y-1\leq n)$.
This gives an arrow from $x$ to $a$ in $G_{Y_A}$
by \eqref{graph:arrows}.

{\bf Case~2.} There are $(i,j),(y-1,l)\in Y$ 
such that $i\leq y-1\leq j\leq l$ and $i\leq x$.
Now take $(i,j),(a,n)\in Y$. Then
$i\leq x<a$, moreover $a\leq y-1\leq j$ 
and $j\leq n$. This gives an arrow from
$x$ to $a$ in $G_{Y_A}$ by \eqref{graph:arrows}.

The subpath from $\mu_t(a)$ to $x$ combined with
the arrow from $x$ to $a$ in $G_{Y_A}$ give
a path from $\mu_t(a)$ to $x$ in $G_{Y_A}$.

For any $j\in A$ such that $a<j$, we have $(a,n),(j,n)\in Y_A$
which forces the arrow $(a,j)$ in $G_{Y_a}$ by 
Lemma~\ref{lem:top-layer-extension}.
Therefore we have a path from $\mu_t(a)$
to $j$ in $G_{Y_A}$ and thus $t_j(Y_A)\leq\mu_t(a)$.
Now we need to turn the inequalities we obtained into equalities.

Let us look at the arrows in $G_{Y_A}$ that are not
in $G_A$. These are either of the form $(x,n+1)$,
where $a\leq x$, as mentioned above, or of the form 
$(e,k)$ as in \eqref{graph:arrows}. In the latter case,
they come from some pair $(i,j)(x,n)\in Y_A$
such that $x\in A$ and  $i\leq x\leq j$. In particular,
such an arrow necessarily terminates at $x\in A$.

If $j\not\in A$ and $j\leq n$, then $x\not \preceq_Y j$,
for any $x\in A$, since $A$ is $G_Y$-closed.
Together with the previous paragraph, this means that 
$i\preceq_Y j$ if and only if  $i\preceq_{Y_A}j$,
for all $i\in \{0,1,\dots,n\}$.  Hence $t_j(Y_A)= t_j$
for such $j$.

We already know that, if $i<\mu_t(a)\leq a$,
there are no arrows from $i$ to $n+1$ in $G_{Y_A}$.
Therefore any path from such $i$ to $n+1$
in $G_{Y_A}$ must go via $A$. So, if we prove 
$t_j(Y_A)=\mu_t(a)$, for all $j\in A$, we will
have $t_{n+1}(Y_A)=\mu_t(a)$ automatically.
Therefore it remains to show that 
$t_j(Y_A)=\mu_t(a)$, for all $j\in A$.

Assume now that some $i<\mu_t(a)$ reaches some
element of $A$ in $G_{Y_A}$. Take such $i$ maximal 
possible. Then there should be an arrow from $i$
to some $x\geq \mu_t(a)$ in $G_{Y_A}$.
If $x\not\in A$, this arrow cannot be new 
and hence is already in $G_Y$. 
By \eqref{eq:graphprop}, since we can reach
$h$ from $\mu_t(a)$ in $G_Y$, we can reach
$h$ from $i$ in $G_Y$. This contradicts 
the definition of $\mu_t(a)$.
Hence $x\in A$ and the arrow $(i,x)$
in $G_{Y_A}$ is new.

Our arrow $(i,x)$ in $G_{Y_A}$ 
must come from some $(u,j),(x,n)\in Y_A$
with $u\leq x\leq j\leq n$ and $u\leq i<x$.
Note that $j=n$ is not possible since that would imply
$u\in A$ contradicting $u\leq i<a$. Therefore
$(u,j)\in Y$.
%

The fact that $(u,j)\in Y$ forces, by \eqref{graph:arrows},
arrows $(v,j+1)$, for all values $u\leq v\leq j$, in $G_Y$.
Since $u\leq i$, we have an arrow
$(i,j+1)$ in $G_Y$. But $x\in A$ and $j+1\geq x$,
in particular, $j+1\geq a$. We just computed that $t_{j+1}\leq i$
which contradicts the definition of $\mu_t(a)$.
This proves that $t_s(Y_A)=\mu_t(a)$, for all $s\in A$,
completing the proof.
\end{proof}

For \(n\geq0\), let \(\mathcal T_n\) be the set of vectors
\(t=(t_0,\ldots,t_{n-1})\) satisfying
\eqref{eq:threshold-bracket-condition}. 
By convention, the set \(\mathcal T_0\) consists of the empty vector.

\begin{lemma}\label{lem:catalan}
We have $t\in \mathcal T_n$ if and only if 
$t_j=\vert\{i<j\mid t_i<t_j\}\vert$, for each $0\le j\le n-1$.
\end{lemma}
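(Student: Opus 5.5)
The plan is to prove both implications by one direct counting argument. For fixed $j$, split the indices $i<j$ into the two ranges $0\le i<t_j$ and $t_j\le i<j$, and compare the count $\vert\{i<j\mid t_i<t_j\}\vert$ with $t_j$ on each range. The key observation is that in both directions we have $t_i\le i$ for every $i$. So for every $i<t_j$ we get $t_i\le i<t_j$, and such an $i$ is always counted. Provided $t_j\le j$, the first range therefore contributes exactly $t_j$ indices. No induction is needed.

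\emph{Forward direction.} Assume $t\in\mathcal T_n$, so $t$ satisfies \eqref{eq:threshold-bracket-condition}. Then $0\le t_j\le j$, and as observed every $i<t_j$ satisfies $t_i<t_j$. For $t_j\le i<j$, the second part of \eqref{eq:threshold-bracket-condition} gives $t_i\ge t_j$, so no such $i$ is counted. Hence
$$
\vert\{i<j\mid t_i<t_j\}\vert=\vert\{0,\ldots,t_j-1\}\vert=t_j.
$$

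\emph{Converse direction.} Assume $t_j=\vert\{i<j\mid t_i<t_j\}\vert$ for all $0\le j\le n-1$. The right-hand side is the cardinality of a subset of $\{0,\ldots,j-1\}$, so $0\le t_j\le j$; this is the first part of \eqref{eq:threshold-bracket-condition}, and it also gives $t_i\le i$ for all $i$. By the key observation, all $t_j$ indices $i\in\{0,\ldots,t_j-1\}$ lie in the counted set. That set has exactly $t_j$ elements, so it equals $\{0,\ldots,t_j-1\}$. Consequently no $i$ with $t_j\le i<j$ satisfies $t_i<t_j$, i.e.\ $t_i\ge t_j$ for all such $i$. The case $i=j$ is trivial, which gives the second part of \eqref{eq:threshold-bracket-condition}.

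There is no real obstacle. The only point to watch is that the bound $t_i\le i$ must be available in the converse before the counting step is made. It comes for free from the cardinality interpretation, so no induction on $j$ is required.
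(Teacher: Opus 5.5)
Your proof is correct and is essentially the paper's argument: split the indices $i<j$ at $t_j$, use $t_i\le i<t_j$ to show that all of $\{0,\ldots,t_j-1\}$ is counted, and then use either the bracket condition (forward direction) or the cardinality equality (converse) to handle the range $t_j\le i<j$. Like the paper, you obtain $t_i\le i$ in the converse directly from the counting interpretation, so no induction is needed.
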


\begin{proof}
Suppose first that $t\in\mathcal T_n$ and let $t_j=k$.
For $0\leq i<k$, we have $t_i\leq i<k$. For $k\leq i<j$, applying
\eqref{eq:threshold-bracket-condition} gives $k\leq t_i$. Hence
$\{i<j\mid t_i<t_j\}=\{0,\ldots,k-1\}$.
	
Conversely, suppose that $t_j=\vert\{i<j\mid t_i<t_j\}\vert$, for every $j$.
Then $0\leq t_j\leq j$. Let $t_j=k$. Since $t_i\leq i<k$, for
$0\leq i<k$, the $k$ indices $0,1,\dots,k-1$ belong to
$\{i<j\mid t_i<t_j\}$. They are therefore the only such indices. Thus
$k\leq t_i$ whenever $k\leq i<j$, which gives
\eqref{eq:threshold-bracket-condition}.
\end{proof}

Lemma~\ref{lem:catalan} identifies $\mathcal T_n$ 
with the family $(h^6)$ in the Catalan Addendum~\cite{Sta}, see also \cite{HT}.
This implies  the following.

\begin{corollary}
\label{lem:catalan-bracket-vectors}
We have  $|\mathcal T_n|=C_n$. 
\end{corollary}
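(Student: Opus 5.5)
Since the corollary comes right after Lemma~\ref{lem:catalan}, I would not lean on the citation to \cite{Sta}. Instead I would give a self-contained bijection between $\mathcal T_n$ and a standard Catalan family, namely Dyck paths of semilength $n$, or equivalently the sequences arising from a stack process. The characterization in Lemma~\ref{lem:catalan} suggests a recursive decomposition: for $t\in\mathcal T_n$ with $n\ge 1$, let $j_0$ be the largest index with $t_{j_0}=0$. Such an index exists because $t_0=0$.

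\textbf{Step 1.} I will show that $t$ splits into two independent blocks, so that $\mathcal T_n$ satisfies the Catalan recurrence.
\begin{itemize}
\item[(i)] For $j>j_0$ we have $t_j>j_0$. If $t_j\le j_0$, then condition \eqref{eq:threshold-bracket-condition} applied with $i=j_0$ gives $t_j\le t_{j_0}=0$, which contradicts the maximality of $j_0$.
\item[(ii)] The shifted tail $(t_{j_0+1}-(j_0+1),\ldots,t_{n-1}-(j_0+1))$ lies in $\mathcal T_{n-1-j_0}$. Both conditions are invariant under simultaneous shift of indices and values, once (i) guarantees nonnegativity.
\item[(iii)] The head $(t_0,\ldots,t_{j_0})$ satisfies \eqref{eq:threshold-bracket-condition} automatically, and its last entry is forced to be $0$. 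Conversely, any $s\in\mathcal T_{j_0+1}$ with $s_{j_0}=0$ arises this way, and this last coordinate is unconstrained: for $j<j_0$ and $t_j\le i\le j$, the condition does not involve $t_{j_0}$ at all.
\end{itemize}
Thus the heads are in bijection with $\mathcal T_{j_0}$ (drop the final $0$). Setting $k=j_0$, this gives
$$|\mathcal T_n|=\sum_{k=0}^{n-1}|\mathcal T_k|\,|\mathcal T_{n-1-k}|.$$

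\textbf{Step 2.} Conversely, I must check that concatenating any $s\in\mathcal T_k$, the entry $0$, and a shifted $u\in\mathcal T_{n-1-k}$ gives an element of $\mathcal T_n$ whose last zero is at position $k$.
\begin{itemize}
\item The bracket condition for an index $j$ in the tail only involves indices $i\ge t_j>k$, so it stays inside the tail, where it holds by the choice of $u$.
\item For indices $j\le k$, the condition only involves $i\le j\le k$, so it stays inside the head, where it holds by the choice of $s$ together with the appended zero.
\end{itemize}
This check is the main point requiring care, chiefly verifying that no cross-block instance of \eqref{eq:threshold-bracket-condition} can occur. Together with $|\mathcal T_0|=1=C_0$, the recurrence gives $|\mathcal T_n|=C_n$ by induction.
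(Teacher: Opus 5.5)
Your proof is correct, but it takes a different route from the paper. The paper does not prove the count internally. It uses Lemma~\ref{lem:catalan} to rewrite \eqref{eq:threshold-bracket-condition} as $t_j=|\{i<j\mid t_i<t_j\}|$, recognizes this as family $(h^6)$ in Stanley's Catalan Addendum~\cite{Sta}, and quotes the count from there.

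You work directly from \eqref{eq:threshold-bracket-condition}, so Lemma~\ref{lem:catalan} is not needed at all. You split $t$ at its last zero $j_0$:
\begin{itemize}
\item Part (i) shows that every later threshold exceeds $j_0$. Hence no instance of the bracket condition crosses between the head $(t_0,\ldots,t_{j_0-1})\in\mathcal T_{j_0}$ and the shifted tail in $\mathcal T_{n-1-j_0}$.
\item Step~2 supplies the inverse map.
\end{itemize}
Together these give the convolution $|\mathcal T_n|=\sum_{k=0}^{n-1}|\mathcal T_k|\,|\mathcal T_{n-1-k}|$ with $|\mathcal T_0|=1$.

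The citation route is shorter and places $\mathcal T_n$ in a known catalogue of Catalan families. Your route is self-contained and checkable inside the paper. It also exposes the same block structure that the paper exploits later in Proposition~\ref{prop:closed-row-count} and in part (b) of Proposition~\ref{prop:state-weight-factorization}. There the split is made at the value $k$ of the last coordinate rather than at the last zero. Splitting at $k=t_{n-1}$ would work equally well for your count: the head is $(t_0,\ldots,t_{k-1})\in\mathcal T_k$ and the tail is $(t_k-k,\ldots,t_{n-2}-k)\in\mathcal T_{n-1-k}$.

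Two cosmetic points:
\begin{itemize}
\item You announce a bijection with Dyck paths but never construct one. What you actually use is the recurrence plus the initial value, and that suffices.
\item In Step~2, the condition at the appended zero itself, namely $0\le i\le k\Rightarrow 0\le t_i$, holds trivially and is worth stating explicitly.
\end{itemize}
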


To each $[G_Y]\in\mathcal S_{n+1}$, we associate a threshold vector
$t(Y)\in\mathcal T_{n+2}$. By Lemma~\ref{lem:reachability-interval},
this $t(Y)$ determines the equivalence class of $G_Y$ uniquely.
In particular,  the map from $\mathcal S_{n+1}$
to $\mathcal T_{n+2}$ that sends $[G_Y]$ to $t(Y)$
is injective.
Proposition~\ref{lem:catalan-bracket-vectors} now gives
$$
\vert\{S(Y)\mid Y\in R(m)\}\vert
=\vert\mathcal S_{m+1}\vert
\leq\vert\mathcal T_{m+2}\vert
=C_{m+2}.
$$

\subsection{States and Catalan numbers}

We are now ready to prove the following result:

\begin{theorem}
\label{thm:catalan-states}
For every \(n\geq0\),
we have  $|\{S(Y)\mid Y\in R(m)\}|=C_{m+2}$. 
\end{theorem}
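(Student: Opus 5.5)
Since the inequality $|\{S(Y)\mid Y\in R(m)\}|\leq C_{m+2}$ was established just before the statement, it remains to prove surjectivity of $[G_Y]\mapsto t(Y)$ from $\mathcal S_{m+1}$ onto $\mathcal T_{m+2}$. In other words, we must show that every vector $t\in\mathcal T_{m+2}$ occurs as $t(Y)$ for some $Y\in R(m)$. The natural tool is the transition rule of Lemma~\ref{lem:threshold-transition}. We argue by induction on $m$, starting from $m=-1$, where $R(-1)=\{*\}$, the graph on $\{0\}$ has no arrows and $t=(0)$, the unique element of $\mathcal T_1$. (One can also check $m=0$ directly: the two graphs displayed for $n=1$ give $(0,1)$ and $(0,0)$, which exhaust $\mathcal T_2$.)

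For the inductive step, fix $s=(s_0,\ldots,s_{n+1})\in\mathcal T_{n+2}$ with $n=m$. We want $Y\in R(n-1)$ with $t(Y)=t$ for a suitable $t\in\mathcal T_{n+1}$, and $A\in S(Y)$ with $t(Y_A)=s$. If $s_{n+1}=n+1$, then truncation $t=(s_0,\ldots,s_n)$ lies in $\mathcal T_{n+1}$, since condition \eqref{eq:threshold-bracket-condition} is inherited by prefixes. We take $A=\varnothing$ and apply part \eqref{claimb-2} of Lemma~\ref{lem:threshold-transition}.

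Otherwise put $c=s_{n+1}\leq n$. Let $A=\{j\leq n\mid s_j=c,\ j\geq c\}$; note that $c\in A$ because $s_c\leq c$ and the bracket condition gives $s_c\geq c$. Here $a=\min A=c$, which is consistent with $\mu_t(a)=c$ being at most $a$. The delicate part is the choice of the preimage vector $t$. Set $t_j=s_j$ for $j\notin A$, and on $A$ we must choose values such that $t\in\mathcal T_{n+1}$, $A$ is $t$-closed in the sense of Lemma~\ref{lem:threshold-admissible-row}, and $\min_{j\geq c}t_j=c$. The simplest attempt is to take $t_j=j$ for $j\in A$, making every element of $A$ a ``root''. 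Then $\mu_t(c)=\min_{j\ge c}t_j$, and this equals $c$ because all $s_j$ with $j\ge c$ are at least $c$ by the bracket condition applied to $s_{n+1}=c$. Lemma~\ref{lem:threshold-transition}\eqref{claima-2} then returns exactly $s$, and by induction $t=t(Y)$ for some $Y\in R(n-1)$.

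The main obstacle is verifying that this $t$ lies in $\mathcal T_{n+1}$ and that $A\in S(Y)$. Resetting $t_j=j$ on $A$ could break the bracket condition for some $k\notin A$ with $t_k\leq j<k$ whenever we would need $t_k\le j$. Here $t_k=s_k$, and the condition in $s$ gives $s_k\leq s_j=c$, so we need $s_k\leq j$, which holds; the bracket condition for the new entries holds trivially. For closedness, we need: if $i\in A$ and $t_k\le i\le k$, then $k\in A$. For $k\notin A$ with $k>i\ge c$ we have $s_k>c$ (by the bracket condition in $s$ and the fact that $s_k\neq c$), and we would have to rule out $s_k\le i$. This is where the argument may fail, and it might force a different choice of $A$, for instance taking $A$ to be only the elements $j$ with $s_j=c$ that are not enclosed by any bracket $[s_k,k]$ with $s_k>c$. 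I would first test the construction on $\mathcal T_4$ against the five classes found for $n=2$, and then adjust $A$ (or instead construct $Y$ directly, as a union of ``staircase'' blocks realizing each bracket interval $[t_j,j]$ through arrows $(e,j)$) until both conditions hold.
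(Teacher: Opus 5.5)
Your construction works, but the proof has a gap at exactly the step you flagged. You never establish that $A$ is closed under $t$, i.e.\ that $A\in S(Y)$ in the sense of Lemma~\ref{lem:threshold-admissible-row}. This is a hypothesis of Lemma~\ref{lem:threshold-transition}, so without it the conclusion $t(Y_A)=s$ is not justified. Your closing paragraph (testing on $\mathcal T_4$, shrinking $A$, or building $Y$ by hand) therefore leaves surjectivity open.

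The step does not fail, and $A$ need not be changed. Suppose $i\in A$, $k\notin A$ and $t_k=s_k\le i\le k$. The bracket condition for $s$ at index $k$, applied to the intermediate index $i$, gives $s_k\le s_i=c$. The bracket condition at index $n+1$ (where $s_{n+1}=c$), applied to $c\le i\le k\le n$, gives $s_k\ge c$. Hence $s_k=c$ with $k\ge c$, so $k\in A$, a contradiction.

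The same inequality shows that your fallback set (those $j$ with $s_j=c$ not enclosed by any $[s_k,k]$ with $s_k>c$) is just $A$ again. Any bracket $[s_k,k]$ containing such a $j$ has $s_k\le s_j=c$. With this line added, your argument is complete. Your checks that the modified vector lies in $\mathcal T_{n+1}$ and that $\mu_t(c)=c$ are correct, and part~(a) of Lemma~\ref{lem:threshold-transition} then returns exactly $s$.

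For comparison, the paper uses the same set $A$ but takes as preimage simply the truncation $(s_0,\ldots,s_n)$, without resetting the entries on $A$ to $t_j=j$. Then membership in $\mathcal T_{n+1}$ is automatic, since prefixes inherit the bracket condition, as you already use in the case $s_{n+1}=n+1$. Also $\mu=c$ follows from $s_c=c$ and $s_j\ge c$ for $j\ge c$, and closedness is the one-line argument above. Your reset also works, because Lemma~\ref{lem:threshold-transition} overwrites every coordinate in $A$ by $\mu_t(c)$ anyway. It only costs the extra check that the modified vector is a threshold vector, though it does show that a threshold state can have more than one predecessor state.
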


\begin{proof}
We only need to prove that the map $[G_Y]\mapsto t(Y)$
from $\mathcal S_{n+1}$ to $\mathcal T_{n+2}$ is surjective.
In other words, we need to show that, for every vector 
\(t\in\mathcal T_{n+2}\), there exists $Y\in R(n)$ such that $t(Y)=t$. 
We argue by induction on  \(N=n+2\).  For \(N=2\), the vectors
\((0,1)\) and \((0,0)\) are realised by \(\varnothing\) 
and \(\{(0,0)\}\) in \(R(0)\), respectively.
	
Let \(N\geq3\), take $t=(t_0,\ldots,t_{N-1})\in\mathcal T_N$
and set $k=t_{N-1}$. Consider also 
\(s=(t_0,\ldots,t_{N-2})\in \mathcal T_{N-1}\). By the
induction hypothesis, there is \(Y\in R(N-3)\) with \(t(Y)=s\).
If \(k=N-1\), take \(A=\varnothing\). Then \(t(Y_A)=t\). 

Suppose that \(k<N-1\), and set
	$$
	A=\{i\in\{k,\ldots,N-2\}\mid t_i=k\}.
	$$
	We shall prove $A\in S(Y)$ and $t(Y_A)=t$. Applying \eqref{eq:threshold-bracket-condition} to $t_{N-1}=k$
	gives $k\leq t_i$ for $k\leq i\leq N-1$, in particular, $t_k=k$.
	If $i\in A$ and $t_j\leq i\leq j$, then
	\eqref{eq:threshold-bracket-condition} gives $t_j\leq t_i=k$, while
	$j\geq k$ gives $k\leq t_j$. Thus $t_j=k$ and $j\in A$, so $A\in S(Y)$ by Lemma \ref{lem:threshold-admissible-row}. Moreover, $\mu_s(k)=k$, 
	and hence $t(Y_A)=t$ follows from Lemma~\ref{lem:threshold-transition}.
\end{proof}

All in all, we have now established that, for every \(n\geq1\), 
there are canonical bijections
$$
R(n-1)/{\sim_{n-1}}
\xrightarrow{\sim}\mathcal S_n
\xrightarrow{\sim}\mathcal T_{n+1},
\qquad
[Y]_{n-1}\longmapsto [G_Y]\longmapsto t(Y).
$$

\section{First column statistics}\label{sec:firstc}

\subsection{Definitions and small ranks}\label{sec:firstc.1}

For $n\geq 0$ and $0\leq k\leq n+1$, let $R(n,k)$
denote the subset of $R(n)$ consisting of all $X$
satisfying the condition $|X\cap \{(0,0),(0,1),\dots,(0,n)\}|=k$
(i.e. $X$ has exactly $k$ elements in the first column).
Set $r(n,k):=|R(n,k)|$. Then 
\begin{displaymath}
R(n)=\coprod_{k=0}^{n+1} R(n,k)\quad\text{ and  }\quad
r(n)=\sum_{k=0}^{n+1} r(n,k).
\end{displaymath}
Here is the table of the brute force computed values 
for $r(n,k)$, where $n\leq 6$:
\begin{displaymath}
\begin{array}{c||c|c|c|c|c|c|c|c||c}
n\backslash k & 0& 1 & 2 & 3 & 4 & 5 & 6 & 7&\Sigma\\
\hline\hline
0 & 1 & 1 & & & & & & & 2 \\ 
\hline
1 & 2 & 4 & 1 & & & & & & 7 \\ 
\hline
2 & 7 & 20 & 9 & 1 & & & & & 37 \\
\hline 
3 & 37 & 132 & 79 & 16 & 1 & & & & 265 \\ 
\hline
4 & 265 & 1105 & 784 & 216 & 25 & 1 & & & 2396 \\ 
\hline
5 & 2396 & 11256 & 8957 & 2992 & 480 & 36 & 1 & & 26118\\ 
\hline
6 & 26118 & 135135 & 116975 & 44668 & 8845 & 932 & 49 & 1 & 332723 
\end{array}
\end{displaymath}

\subsection{The value of $r(n,0)$}\label{sec:firstc.2}

Here we prove:

\begin{proposition}\label{prop:rn0}
We have $r(n,0)=r(n-1)$. 
\end{proposition}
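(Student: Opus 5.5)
Let $D(n)=\{(i,j)\in T(n)\mid i\geq 1\}$ be the set of points off the first column. The shift $\sigma(i,j)=(i-1,j-1)$ is a bijection $D(n)\to T(n-1)$. The plan is to show that $X\mapsto\sigma(X)$ is a bijection $R(n,0)\to R(n-1)$. Elements of $R(n,0)$ are exactly the subsets $X\subseteq D(n)$ satisfying Condition~$(\star)$, so it suffices to prove the following: for $X\subseteq D(n)$, the set $X$ satisfies $(\star)$ in $T(n)$ if and only if $\sigma(X)$ satisfies $(\star)$ in $T(n-1)$. The case $n=0$ is the check $r(0,0)=1=r(-1)$, which is immediate.

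The key observation is that Condition~$(\star)$ is translation invariant. Take $u=(i,j)$, $v=(k,l)$, $w=(k',l')$ in $X$. The hypotheses $i\leq k\leq j$, $i\leq k'\leq j$, $j\leq l$, $j\leq l'$ are differences of coordinates, so they are unchanged when every point is shifted by $(-1,-1)$. The conclusion points shift the same way:
$$
\sigma(\max(k,k'),\max(l,l'))=(\max(k-1,k'-1),\max(l-1,l'-1)),
\qquad
\sigma(j+1,\min(l,l'))=((j-1)+1,\min(l-1,l'-1)).
$$
The side condition $j<\min(l,l')$ is also shift invariant. So each instance of $(\star)$ for $X$ corresponds exactly to the instance of $(\star)$ for $\sigma(X)$ with the shifted triple, and conversely.

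The only thing to verify is that the forced points stay inside the domain where $\sigma$ is a bijection. Every forced point coming from points of $X\subseteq D(n)$ lies in $D(n)$:
\begin{itemize}
\item $\max(k,k')\geq 1$, since $k,k'\geq1$;
\item $j+1\geq 1$.
\end{itemize}
So a forced point is never in the first column. This means $X$ is closed in $T(n)$ if and only if it is closed within $D(n)$. Conversely, for $\sigma(X)\subseteq T(n-1)$ the forced points lie in $T(n-1)$ automatically, since $(\star)$ only produces points of the triangle, and these correspond under $\sigma^{-1}$ to points of $D(n)$.

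This last check, that no forced point ever lands in column $0$, is the only place the hypothesis ``no first-column points'' is used. It is also the only potential obstacle, and it reduces to the two inequalities above. Combining the steps, $\sigma$ restricts to a bijection $R(n,0)\to R(n-1)$, which gives $r(n,0)=r(n-1)$.

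Representation-theoretically, this says that cokernel-closed subcategories of $\operatorname{rep}(A_{n+1})$ avoiding all $V_{1,b}$ are the same as those of $\operatorname{rep}(A_n)$ on vertices $2,\dots,n+1$. The combinatorial argument above is the rigorous version of that statement.
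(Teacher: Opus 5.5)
Your proof is correct and follows the paper's approach: the paper asserts in one line that shifting an element of $R(n,0)$ off the first column gives a bijection with $R(n-1)$, and you supply the verification it omits, correctly using the diagonal translation $(i,j)\mapsto(i-1,j-1)$, which is what makes mixed comparisons such as $k\le j$ and the forced point $(j+1,\min(l,l'))$ transform correctly. Your check that forced points avoid column $0$ is harmless but not needed, since $\sigma$ is a translation of $\mathbb{Z}^2$, so $F\subseteq X$ holds if and only if $\sigma(F)\subseteq\sigma(X)$ holds.
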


\begin{proof}
Given $X\in R(n)$ with no points in the first column,
we can shift $X$ one step to the left to get an element
of $R(n-1)$ and vice versa. This provides a bijection
between $R(n,0)$ and $R(n-1)$ and the claim follows.
\end{proof}

\subsection{The value of $r(n,n+1)$}\label{sec:firstc.3}

Here we prove:

\begin{proposition}\label{prop:rnn+1}
We have $r(n,n+1)=1$. 
\end{proposition}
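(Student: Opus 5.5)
Our plan is to show that any $X\in R(n)$ containing the whole first column $\{(0,0),(0,1),\dots,(0,n)\}$ must be all of $T(n)$. Conversely, $T(n)$ itself clearly satisfies Condition~$(\star)$, since every point it could force lies in $T(n)$. Together these give $r(n,n+1)=1$.

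The key step is to use Condition~$(\star)$ with $u$ and $v=w$ taken from the first column. Take $u=(0,j)$ and $v=w=(0,l)$ with $j<l$. The hypotheses $0\le 0\le j$ and $j\le l$ hold, and since $j<\min(l,l)=l$, Condition~$(\star)$ forces $(j+1,l)\in X$. Letting $j$ range over $0,\dots,l-1$, we obtain all points $(1,l),\dots,(l,l)$. Together with $(0,l)$ itself, this shows the entire row $l$ lies in $X$, for every $l\ge 1$. Row $0$ is just $(0,0)$, which is in $X$ by assumption. Hence $X=T(n)$.

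This is exactly the mechanism of the example in the paper, where $(0,0)$ and $(0,1)$ force $(1,1)$. We also check that $T(n)\in R(n)$: the forced points $(\max(k,k'),\max(l,l'))$ and $(j+1,\min(l,l'))$ have coordinates in $\{0,\dots,n\}$ with first coordinate at most the second. For the second point, $j+1\le\min(l,l')$ holds precisely because of the hypothesis $j<\min(l,l')$.

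There is no real obstacle here. The only point requiring care is verifying the index inequalities in $(\star)$ for the specific choice $u=(0,j)$, $v=w=(0,l)$; in particular, that $v=w$ is allowed, which the paper's own example with $v=w$ confirms.
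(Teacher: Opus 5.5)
Your proof is correct, and it is a slightly more direct route than the paper's. The paper works in two stages. It first uses the last line of Condition~$(\star)$ with first-column points to put the main diagonal into $X$. It then uses the middle line of $(\star)$, combining a diagonal point with first-column points, to force the remaining points of $T(n)$. You instead force every point $(i,l)$ with $1\le i\le l$ in a single step from the last line alone, taking $u=(0,i-1)$ and $v=w=(0,l)$. This needs only one clause of $(\star)$ and no intermediate diagonal stage. Your version is also local: it shows that $(i,l)$ is forced by just the two first-column points $(0,i-1)$ and $(0,l)$. The paper itself uses exactly this observation later, in the proofs of Propositions~\ref{prop:rnn-1} and~\ref{prop:rnn-2}, to show that every point off the paths of missing points lies in $X$. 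Your explicit check that $T(n)\in R(n)$, which the paper leaves implicit, is also correct.
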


\begin{proof}
We want to show that $R(n,n+1)=\{T(n)\}$. Indeed,
let us pick some $X\in R(n,n+1)$. Since $X$ contains the whole
first column, the last line of Condition~$(\star)$
applied to the origin and one of the points in the
first column forces $X$ to contain all elements 
on the main diagonal.

If we now apply Condition~$(\star)$ to the origin,
a point on the main diagonal and a higher point
in the first column, we force all the remaining 
points of $T(n)$ to be in $X$. Therefore
$X=T(n)$.
\end{proof}

\subsection{The value of $r(n,n)$}\label{sec:firstc.4}

Here we prove:

\begin{proposition}\label{prop:rnn}
We have $r(n,n)=(n+1)^2$. 
\end{proposition}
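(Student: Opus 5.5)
The plan is to fix the unique missing point of the first column and classify the possible $X$ for each position of that hole. Let $X\in R(n,n)$ and let $(0,m)$, with $0\le m\le n$, be the unique first-column point not in $X$. We will show that, for fixed $m$, $X$ is determined by a small amount of data. We will then count the admissible data and sum over $m$; the target is $(n+1)^2$.

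\textbf{Step 1: rows below $m$.} First observe that Condition~$(\star)$ is compatible with truncation. If $u$ lies in row $j$, then $v,w$ lie in rows $\ge j$, and the outputs lie in row $\max(l,l')$ or $\min(l,l')$. Hence $X\cap T(N)\in R(N)$ for every $N$. Applying this with $N=m-1$, the truncation contains the whole first column of $T(m-1)$. So, by Proposition~\ref{prop:rnn+1}, $T(m-1)\subseteq X$.

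\textbf{Step 2: rows above $m$.} For $l>m$, apply the last line of $(\star)$ to $u=(0,j)$ and $v=w=(0,l)$, for each $j<l$ with $j\neq m$. This forces $(j+1,l)\in X$. So row $l$ contains every point except possibly $(m+1,l)$. Next we decide whether $(m+1,l)$ is forced. We try the first line of $(\star)$ with $u=(0,m+1)$ or $u=(0,m-1)$, combined with points already obtained in rows $\ge l$. We expect the outcome to be a threshold phenomenon: the set of $l>m$ with $(m+1,l)\in X$ is either empty or an up-set $\{l\ge l_0\}$, perhaps linked to whether $(m+1,m+1)\in X$.

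\textbf{Step 3: row $m$ itself.} The only undetermined points are $(k,m)$ with $1\le k\le m$. Using $u=(i,j)$ from the full triangle $T(m-1)$, together with the rules for $v,w$ in rows $\ge m$, we will show that $X\cap\{(k,m)\mid 1\le k\le m\}$ must also be an interval-type set, determined by a single parameter.

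\textbf{Step 4: counting.} We then check that every surviving choice actually satisfies $(\star)$. The small cases $n=1$ (where $m=0$ and $m=1$ each give $2$ sets) and $n=2$ (total $9$) serve as a sanity check. Finally we sum over $m$. The expected shape is that each $m$ contributes a count linear in $n$ and $m$, and these contributions add up to $(n+1)^2$.

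The main obstacle is Steps 2–3: pinning down exactly which configurations of the column $m+1$ above row $m$, and of row $m$, are compatible with $(\star)$, and verifying sufficiency. I would first handle this by listing all triples $u,v,w$ that could force a missing point. I would do this directly from Lemma~\ref{lem:cokernel-formula} in its $(\star)$ form, treating separately the cases where $u$ is in the first column, in $T(m-1)$, or in row $\ge m$.
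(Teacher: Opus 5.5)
Your Step 1 and the first half of Step 2 are correct. Together they give the paper's first reduction: every point of $T(n)$ off the hook $(0,m),(1,m),\dots,(m,m),(m+1,m+1),\dots,(m+1,n)$ lies in $X$.

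Everything after that is stated as an expectation, and the way you split the remaining work hides the one fact the count depends on. You treat the column piece $\{(m+1,l)\mid l>m\}$ and the row piece $\{(k,m)\mid 1\le k\le m\}$ as two separate one-parameter families. If they were independent, you would get $(m+1)(n-m+1)$ sets for each $m$ and $\binom{n+3}{3}$ in total. This agrees with $(n+1)^2$ for $n=1$, so that sanity check cannot see the problem, but it gives $10\neq 9$ for $n=2$.

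The missing idea is the coupling between the two pieces. If $(i,m)\in X$ for some $1\le i\le m$, apply the last line of $(\star)$ to $u=(i,m)$ and $v=w=(i,l)$, where $(i,l)\in X$ by Step 2. This forces $(m+1,l)\in X$ for every $l>m$. For example, for $n=2$ and $m=1$, the points $(1,1)$ and $(1,2)$ force $(2,2)$. The sources you propose to try will not reveal this. The point $u=(0,m-1)$ only produces points in columns $\le m$. With $u=(0,m+1)$, a point of column $m+1$ can only be obtained from another point of column $m+1$. The relevant trigger is a point of row $m$, not $(m+1,m+1)$.

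Once you have the coupling, order the hook as above; $X$ then meets it in a suffix. For $1\le k<m$, the triple $u=(0,k)$, $v=w=(k,m)$ moves right along row $m$. The coupling jumps from row $m$ into column $m+1$. The triple $u=(0,m+1)$, $v=(m+1,l)$, $w=(0,l')$ with $l<l'$ moves up the column.

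For sufficiency, inspect the two outputs of $(\star)$. A point of row $m$ can only be produced if $v$ or $w$ lies in row $m$. A point of column $m+1$ can only be produced if $v$ or $w$ lies in column $m+1$, or if $u$ lies in row $m$. Since $(0,m)\notin X$, in every case the relevant input is a hook point of $X$ that comes no later than the output. Hence each of the $n+1$ suffixes avoiding $(0,m)$, together with the complement of the hook, is an element of $R(n)$.

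So each $m$ contributes exactly $n+1$, independently of $m$, and the sum is $(n+1)^2$. Without the coupling and this sufficiency check, your Steps 3 and 4 do not determine the count.
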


\begin{proof}
We split $R(n,n)$ further, according to which 
point in the first column is missing: 
for $0\leq m\leq n$, define $R(n,n,m)$
as the set of all element in $R(n,n)$
which do not contain $(0,m)$. To prove our
claim, it is enough to show that $|R(n,n,m)|=n+1$,
for each $m$.

Arguments similar to those used in the proof of
Proposition~\ref{prop:rnn+1} show that each element
of $R(n,n,m)$ must contain all points outside the
and the
$m+1$-st column. Let us order these points as follows:
\begin{displaymath}
(0,m),(1,m),\dots,(m,m),(m+1,m+1),(m+1,m+2),\dots,(m+1,n).
\end{displaymath}
Note that we have $n+1$ elements in total and that we know
for sure that the point $(0,m)$ is missing.

For the remaining points, it is easy to see that 
Condition~$(\star)$ implies that, as soon as our 
element contains one of the above points,
it must contain all points to the right of it
in the above list.

Let $X_m$ be the subset of $T(n)$ obtained by
removing from $T(n)$ all the above points.
It is also easy to check that if we fix one
of the above points, except for $(0,m)$,
and add to $X_m$ this point and all points to the
right of it in the above list, we get an element
of $R(n,n,m)$. This implies $|R(n,n,m)|=n+1$
and we are done.
\end{proof}

\subsection{The value of $r(n,n-1)$}\label{sec:firstc.5}

Here we prove:

\begin{proposition}\label{prop:rnn-1}
We have $r(n,n-1)=\frac{n^2(n+1)^2}{2}+\binom{n+2}{3}-n$, for $n\geq 1$. 
\end{proposition}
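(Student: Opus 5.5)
Following the pattern of the proof of Proposition~\ref{prop:rnn}, we split $R(n,n-1)$ according to the two points of the first column that are missing. For $0\leq m<m'\leq n$, let $R(n,n-1,m,m')$ be the set of elements of $R(n,n-1)$ that contain every point $(0,p)$ with $p\neq m,m'$ and omit $(0,m)$ and $(0,m')$. We then have $r(n,n-1)=\sum_{m<m'}|R(n,n-1,m,m')|$. Comparing with the target, $\frac{n^2(n+1)^2}{2}=\binom{n+1}{2}\cdot n(n+1)$, so the generic count over a pair should be roughly $n(n+1)$, which is the product of two ``chain'' counts of sizes $n$ and $n+1$. The corrections $\binom{n+2}{3}-n$ should come from interactions when $m$ and $m'$ are close, or when one of them equals $0$ or $n$.

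The first step is to determine the forced part. As in the proofs of Propositions~\ref{prop:rnn+1} and~\ref{prop:rnn}, we apply Condition~$(\star)$ with $u=(0,p)$ for present first-column points $(0,p)$, together with pairs of present points above them. This should force every point of $T(n)$ outside a set of ``free'' points associated with the two gaps. For a single gap $m$, that free set was the hook $(1,m),\dots,(m,m),(m+1,m+1),\dots,(m+1,n)$. For two gaps we expect the free region to be the union of the two hooks $H_m$ and $H_{m'}$. Within each hook, Condition~$(\star)$ should again force an up-set structure in the linear order, so that a candidate element is given by a choice of a cut point in $H_m$ and a cut point in $H_{m'}$. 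Since $(0,m)$ and $(0,m')$ are omitted, each hook contributes a chain of $n+1$ options, giving $(n+1)^2$ candidates per pair before compatibility.

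The second step, which I expect to be the main obstacle, is to analyse the compatibility between the two cuts. The hooks $H_m$ and $H_{m'}$ intersect: for example, $(m+1,m')$ lies in the horizontal row of $H_{m'}$ and in the vertical column of $H_m$ when $m+1\leq m'$. They also interact through $(\star)$ applied to a point of one hook together with points of the other hook. I would first work out the cases $m'=m+1$ and $m'\geq m+2$ separately. For each case I would write down precisely which pairs of cut points $(c,c')$ yield an element of $R(n)$, by checking $(\star)$ on triples that involve points of both hooks; Lemma~\ref{lem:cokernel-formula} in the form of $(\star)$ is the only tool needed. I would then sum the admissible pairs over $m<m'$, expecting a main term $\binom{n+1}{2}n(n+1)$ plus a cubic correction.

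To control errors, I would verify the resulting formula against the table, which gives $4,20,132,1105$ for $n=1,2,3,4$ (so $2+4-2=4$, $18+4-2=20$, and so on). I would also cross-check through the recursion of Section~\ref{sec:adding-one-top-layer}: adding a top row to an element of $R(n-1,n-1)$ or $R(n-1,n-2)$ and tracking the first column via Lemma~\ref{lem:top-layer-extension}, using Proposition~\ref{prop:rnn} for the former. If the direct case analysis becomes unwieldy, this recursion is the fallback. We would then have $r(n,n-1)=r(n-1,n-2)\cdot(\text{extensions keeping }(0,n))+\#\{\text{extensions of }R(n-1,n-1)\text{ omitting }(0,n)\}$, and the formula would follow by induction once these extension counts are computed with threshold vectors via Lemma~\ref{lem:threshold-admissible-row}.
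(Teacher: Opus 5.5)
\textbf{The proposal does not prove the proposition, and the structure it relies on is false.} Your skeleton matches the paper's: fix the missing points $(0,m),(0,m')$, show everything off the two hooks lies in $X$, treat $m'=m+1$ separately, and sum over pairs. But the outline stops exactly where the actual counting begins. It also assumes that $X\cap H_m$ and $X\cap H_{m'}$ are up-sets, so that $X$ is a pair of cut points, and this fails. The hooks meet at $z=(m+1,m')$, and the chain steps of Proposition~\ref{prop:rnn} that lead into $z$ are no longer available. The step along row $m'$ from $(m,m')$ to $z$ needs a point of row $m$. When $m'>m+1$, the step up column $m+1$ from $(m+1,m'-1)$ to $z$ needs $(0,m')$. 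Neither need be present. A counterexample already occurs for $n=2$: the set $\{(0,1),(1,1)\}$ lies in $R(2,1)$, yet its intersection $\{(1,1)\}$ with $H_0=\{(1,1),(1,2)\}$ is not an up-set. The pairs $(m,m')=(0,1),(0,2),(1,2)$ contribute $7,6,7$ elements (total $20$), while pairs of up-sets agreeing at $z$ give only $5,4,5$. The adjacent case is worse. There $z=(m',m')$ is diagonal, and if rows $m,m'$ contain nothing else, it forces nothing above it, so the two columns above $z$ decouple from $z$. For $(m,m')=(0,1)$ there are $n^2+\binom{n+1}{2}$ elements, which exceeds $(n+1)^2$ once $n\geq 4$ (e.g.\ $26>25$). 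So no compatibility analysis on your $(n+1)^2$ candidates can produce the formula.

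\textbf{What is missing} is to delete $z$ from both hooks. Each remaining list ($n$ points, the first one absent) is then up-closed, giving $n$ choices each, and $z$ is handled separately. For $m'>m+1$ this gives $m'(m+1)$ configurations with $z\in X$ and $n^2-m(m'-1)$ with $z\notin X$, i.e.\ $n^2+m+m'$. The only obstruction in the second count is that $(m+1,m'-1)$, $(m,m')$, $(0,m'-1)$ force $z$. For $m'=m+1$ you must count the staircase formed by the two columns above the diagonal point $z$, which gives $n^2+2m+\binom{n-m+1}{2}$. The cubic correction $\binom{n+2}{3}-n=\sum_{m=0}^{n-1}\bigl(\binom{n-m+1}{2}-1\bigr)$ therefore comes from adjacent pairs only.

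Two smaller issues remain. First, your numerical check reads the wrong entries of the table: $4,20,132,1105$ is the column $k=1$. The relevant values are $r(1,0)=2$, $r(2,1)=20$, $r(3,2)=79$, $r(4,3)=216$, and the formula does reproduce these. Second, your fallback recursion is misstated, because the number of admissible top rows depends on $t(Y)$. So $r(n,n-1)$ is a sum over $Y\in R(n-1,n-2)\cup R(n-1,n-1)$, not $r(n-1,n-2)$ times a constant. Evaluating that sum needs the distribution of threshold vectors on these sets, which is the same structural analysis.
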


\begin{proof}
Let us analyze some $X\in R(n,n-1)$.
We start by fixing the two missing points $(0,a)$ and $(0,b)$
in the first column, where  $0\le a<b\le n$.
Consider the following two sequences, ordered as written:
\begin{align}
&(0,a),(1,a),\ldots,(a,a),(a+1,a+1),\ldots,(a+1,n),\notag\\
&(0,b),(1,b),\ldots,(b,b),(b+1,b+1),\ldots,(b+1,n).
\label{eq:two-missing-paths}
\end{align}
The two sequences intersect at the single point $z:=(a+1,b)$.

We claim that
every point outside these two paths belongs to $X$. Indeed, for such a
point $(i,j)$ with $i>0$, both $(0,i-1)$ and $(0,j)$ belong to $X$.
Hence Condition~$(\star)$, with the second point used twice,
forces $(i,j)$. For $i=0$, the claim is clear from the assumptions.

Let us temporarily remove the point $z$ from both sequences. 
Just like in Proposition~\ref{prop:rnn}, for each remaining sequence,
we have that, as soon as $X$ contains some point of the sequence, 
$X$ must contain  every point (except possibly $z$) which 
come after it in the sequence.

This means that the intersection with $X$ for each of the two remaining
sequences is specified by the leftmost point of this intersection, 
or by the fact that this intersection is empty.
Each sequence has $n$ points (note that $z$ is removed), and the first point
of each sequence is, by definition, not in $X$. Hence there are $n$ 
choices for each of the two sequences. It remains to
check the restrictions involving $z$.

\medskip
\noindent\textbf{Case 1: $b>a+1$.}

\smallskip
\noindent\textit{Subcase 1.1: $z\in X$.}
Since $z\in X$, the same argument as above gives that 
all points after $z$ in both sequences in
\eqref{eq:two-missing-paths} belong to $X$. It remains to choose the first
selected point in each list. There are $b$ choices in the first list and
$a+1$ in the second, giving $b(a+1)$ choices in total.

\smallskip
\noindent\textit{Subcase 1.2: $z\notin X$.}
Consider the $n^2$ pairs of choices for
the two sequences with $z$ removed. 
If our choice for the first sequence starts before $z$ and 
for the second at one of $(1,b),\ldots,(a,b)$,
then the elements $(a+1,b-1)$ and $(a,b)$;
together with $(0,b-1)$, force $z$ by Condition~$(\star)$.
So, these choice we should not count. There are $b-1$ possible 
starting points for the first sequence and $a$ for the second. 
All other choices satisfy Condition~$(\star)$, giving $n^2-a(b-1)$ 
valid choices. Adding the two subcases, gives
\begin{align}
\label{eq:two-missing-separated}
n^2-a(b-1)+b(a+1)&=n^2+a+b.
\end{align}

\medskip
\noindent\textbf{Case 2: $b=a+1$.}
In this case our special  element $z=(a+1,a+1)$ is on the diagonal.

\smallskip
\noindent\textit{Subcase 2.1: $z\in X$.}
First suppose that the row with the second coordinate \(a\) 
is empty and the row with the second coordinate \(b\) contains only 
\(z\). It remains to see what we can have in
the column with the first coordinate $n+1$ (this one contains $z$) 
and the column with the first coordinate  $n+2$.
If the column containing $z$ contains some $(b,j)$, for $j>b$, 
then $(b+1,j)$ is forced by Condition~$(\star)$ applied
to $z$ and $(b,j)$. Let us pick $j$ minimal. Then 
the other column may start at any of $(b+1,b+1),\ldots,(b+1,j)$,
giving $j-b$ choices. If no point in the column with $z$ belongs to $X$, 
there are $n-b+1$ choices for the other column, including the empty 
choice. These give
\begin{align}
(n-b+1)+\sum_{j=b+1}^{n}(j-b)&=\binom{n-b+2}{2}\notag
\end{align}
configurations.

If at least one of our two rows contains a point in $X$ different 
from $z$, both  columns we are discussing must be contained in $X$. 
Indeed, for the first row, any selected point implies
$(a,a)\in X$. For every $j>b$, the point $(a,j)$ is already present,
so it forces $(b,j)$ together with $(a,a)$, and then $(b+1,j)$ together
with $z$. For the second row, a selected point other than $z$ implies
$(a,b)\in X$. Applying Condition~$(\star)$ to
$(a,b)$, $z$, and $(a,j)$ gives both $(b,j)$ and $(b+1,j)$.
When $a=0$, neither row has such a point, so this alternative is absent.

Once both columns are full, each row has $a+1$ independent choices:
row $a$ may start at any of $(1,a),\ldots,(a,a)$ or be empty, while
row $b$ may start at any of $(1,b),\ldots,(a,b)$ or contain only $z$. Excluding
the choice with no additional row points gives $(a+1)^2-1$
configurations. This subcase therefore contributes
$\binom{n-b+2}{2}+(a+1)^2-1$.

\smallskip
\noindent\textit{Subcase 2.2: $z\notin X$.}
If both rows contain a selected point, then we have $(a,a),(a,b)\in X$.
Condition $(\star)$ would then give $z\in X$, a contradiction.
One checks that this is the only extra restriction
on the two sequences. Each row has $a$ choices for 
its first selected point, so the total number of configurations 
in this subcase is $n^2-a^2$. Adding the two subcases gives
\begin{equation}
\label{eq:two-missing-adjacent}
\resizebox{11cm}{!}{$
n^2-a^2+\binom{n-b+2}{2}+(a+1)^2-1
=n^2+2a+\binom{n-a+1}{2}.
$}
\end{equation}

\medskip
Now, we sum everything up.
First, we go over all pairs $0\le a<b\le n$. For an adjacent pair, the
basic term $n^2+a+b$ equals $n^2+2a+1$, so
by \eqref{eq:two-missing-separated} and \eqref{eq:two-missing-adjacent},
we obtain
\begin{align}
r(n,n-1)
&=\sum_{0\le a<b\le n}(n^2+a+b)
+\sum_{a=0}^{n-1}\left(\binom{n-a+1}{2}-1\right).\notag
\end{align}
There are $\binom{n+1}{2}$ pairs $a<b$, and each integer from $0$ to $n$
occurs in exactly $n$ of the sums $a+b$. Therefore
\begin{align}
\sum_{0\le a<b\le n}(n^2+a+b)
&=n^2\binom{n+1}{2}+n\sum_{j=0}^{n}j
=\frac{n^2(n+1)^2}{2}.\notag
\end{align}
The remaining sum is
\begin{align}
\sum_{a=0}^{n-1}\left(\binom{n-a+1}{2}-1\right)
&=\sum_{j=2}^{n+1}\binom{j}{2}-n
=\binom{n+2}{3}-n,\notag
\end{align}
which implies the proposition.
\end{proof}

\subsection{The value of $r(n,n-2)$}\label{sec:firstc6}

Here we prove:

\begin{proposition}\label{prop:rnn-2}
We have $r(n,n-2)=\frac{n^6+n^5-6n^3-n^2-7n+12}{6}$, for $n\geq 2$. 
\end{proposition}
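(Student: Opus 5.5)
We would follow the same strategy as in the proofs of Propositions~\ref{prop:rnn} and~\ref{prop:rnn-1}. First fix the three missing points $(0,a)$, $(0,b)$, $(0,c)$ in the first column, with $0\le a<b<c\le n$. To each missing point $(0,m)$ attach the ``hook''
$$(0,m),(1,m),\ldots,(m,m),(m+1,m+1),\ldots,(m+1,n).$$
The argument from Proposition~\ref{prop:rnn-1} applies verbatim: any point $(i,j)$, $i>0$, lying outside all three hooks has $(0,i-1),(0,j)\in X$, so it is forced. Hence $X$ is determined by its intersection with the union of the three hooks.

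The three hooks meet pairwise in exactly the points
$$z_{ab}=(a+1,b),\qquad z_{ac}=(a+1,c),\qquad z_{bc}=(b+1,c).$$
After removing these intersection points, each hook keeps the ``upward closure'' property used before. So each remaining hook is described by its first selected point (or by being empty), giving $n-1$ choices per hook once its two crossing points are removed. This yields a base count of roughly $(n-1)^3$. We then correct it by a case analysis on:
\begin{itemize}
\item which of $z_{ab},z_{ac},z_{bc}$ lie in $X$ (each present crossing point forces everything after it on both hooks through it);
\item the forbidden configurations in which $(\star)$ forces an absent crossing point. These are the analogue of the $a(b-1)$ exclusion in Subcase~1.2: a hook starting before $z$ on one side, together with the other hook starting in the ``row part'' before $z$, forces $z$.
\end{itemize}

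We split into the generic case, where $a+1<b$ and $b+1<c$ so that all crossings are off the diagonal, and the degenerate cases $b=a+1$ and/or $c=b+1$, where some crossings are diagonal points. The diagonal cases need the column-by-column analysis of Case~2 of Proposition~\ref{prop:rnn-1}. In the generic case we expect each of the three pairs to contribute independently an expression like $a+b$ from \eqref{eq:two-missing-separated}. There are, however, interaction terms when two crossings lie on the same hook. For example, $z_{ac}$ and $z_{bc}$ both lie on row $c$, and $z_{ab}$ and $z_{ac}$ both lie on column $a+1$. These ordering interactions must be handled by tracking, for each hook, where its starting point falls relative to its two crossing points: before the first, between them, or after the second. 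This gives a finite table of cases, and each case contributes a polynomial in $a,b,c,n$.

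Finally we sum over all triples. The generic polynomial is summed over $0\le a<b<c\le n$ using $\binom{n+1}{3}$ and power-sum identities. The adjacent-case corrections are one- or two-parameter sums, analogous to $\sum_a(\binom{n-a+1}{2}-1)$. We would check the result against the table values $r(2,0)=7$, $r(3,1)=132$, $r(4,2)=784$, $r(5,3)=2992$, $r(6,4)=8845$, which also pin down any arithmetic slip. Since the target is a degree-$6$ polynomial valid for $n\ge2$, one can first prove that the count is polynomial of degree at most $6$ on the range $n\ge 2$ (or on each case range) and then interpolate. The main obstacle is the doubly adjacent case $a+1=b=c-1$ together with the interactions of two crossings on a single hook. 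Here we would first enumerate small $n$ by hand to guess the case formula, and only then verify the forcing rules via Lemma~\ref{lem:cokernel-formula}/$(\star)$.
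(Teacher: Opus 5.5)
Your framework is the paper's: the three hooks, the crossings $x=(a+1,b)$, $y=(a+1,c)$, $z=(b+1,c)$, forcing of every point off the hooks, the suffix property of the three lists once the crossings are deleted, the parametrisation by first selected positions $p,q,r\in\{1,\dots,n-1\}$, and the split according to whether $b=a+1$ and/or $c=b+1$. In the generic case, your two rules are exactly the ones from which the paper derives its eight-row restriction table, including the impossibility of $\{x,y\}$. The rules are: a selected crossing forces the non-crossing points after it on both lists, and an absent crossing forbids both lists through it from starting before it. The gap is that the proposal stops where the proof begins. No restriction is written down, nothing is counted or summed, and the adjacent cases are to be ``guessed from small $n$''. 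As it stands, the formula is not established.

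Some of your guiding statements would also mislead you when you carry this out. Read literally, ``each present crossing point forces everything after it on both hooks'' is false: $x$ does not force $y$, and $y$ does not force $z$. The paper's generic selections $\{x\}$ and $\{y\}$ have positive counts $b(a+1)(n-b)$ and $(c-1)(a+1)(n-c+1)$; only non-crossing points are forced. More importantly, in the degenerate cases the loss of forcing is not confined to the diagonal crossings. If $b=a+1$ and $c=a+2$, the off-diagonal crossing $y=(a+1,a+2)$ no longer forces, on its own, the points above it in column $a+1$. The only first-column points that could be used for this, $(0,a+1)=(0,b)$ and $(0,a+2)=(0,c)$, are missing. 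This is why the paper's Case~4 table already has extra configurations for the selection $\{y\}$ alone (those with $p>b$). It is also why the implication $x,y\Rightarrow z$, obtained from $(\star)$ with $u=x$ and $v=w=y$, must be imposed directly there rather than coming out of the list exclusions. Finally, interpolation is no shortcut. Proving that the count is a polynomial of degree at most $6$ already requires the per-case counts. Such a polynomial needs seven values, while the table supplies only five, for $n=2,\dots,6$.
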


\begin{proof}
Let $Y\in R(n)$ be such that it has exactly three missing points
in the first column. Let $(0,a),(0,b),(0,c)$, where 
$0\le a<b<c\le n$, be these missing points.
For each $d\in\{a,b,c\}$, consider the ordered path
\begin{displaymath}
(0,d),(1,d),\ldots,(d,d),(d+1,d+1),\ldots,(d+1,n).
\end{displaymath}
There are exactly three points, namely, $x=(a+1,b)$, $y=(a+1,c)$, and $z=(b+1,c)$,
where two the above paths cross. Every point outside these three paths 
belongs to $Y$: for such a point $(i,j)$
with $i>0$, the points $(0,i-1),(0,j)$ are in $Y$ and force $(i,j)$
by Condition~$(\star)$.

Remove the crossings from the three paths. Each remaining list has $n-1$
points, including its prescribed missing first point. Within each list,
any point, if selected, forces all later points. Indeed, within a row, a point
$(i,d)$ forces $(k,d)$ for $i<k$ using $(0,k-1)$; this first-column point
is present when $(k,d)$ is not a crossing. A selected row point $(i,d)$
forces a later column point $(d+1,j)$ using the already present point
$(i,j)$. Finally, a column point $(d+1,j)$ forces $(d+1,l)$ for $l>j$
in the same list, using $(0,j)$ and $(0,l)$.

Number each remaining list from $0$ to $n-2$. 
Let $p,q,r\in\{1,\ldots,n-1\}$ be the first selected positions
in the three lists, respectively. The value $n-1$ means that the list is empty.
These three positions, together with the selected crossings, determine $Y$.
We distinguish four cases according to whether $a,b$ and $b,c$ are adjacent.
In each case we count configurations for fixed $a,b,c$, and only at the end
sum over their possible positions.

\medskip\noindent\textbf{Case 1: $b>a+1$ and $c>b+1$.}

Neither pair of missing rows is adjacent, so none of the crossing points
is on the diagonal. The following table gives the restrictions on
$p,q,r$ for each choice of crossings. All variables remain in
$\{1,\ldots,n-1\}$, and only the crossings in the 
table's first column are selected.
\begin{equation}
\label{eq:path-cuts}
\resizebox{11cm}{!}{
$
\renewcommand{\arraystretch}{1.15}
\begin{array}{c|l}
\text{selected crossings}&\text{restrictions}\\\hline
\varnothing&\neg(p\le b-1,\ q\le a),\quad
 \neg(p\le c-2,\ r\le a),\quad\neg(q\le c-2,\ r\le b-1)\\
\{x\}&p\le b,\ q\le a+1,\ r\ge b\\
\{y\}&p\le c-1,\ q\ge c-1,\ r\le a+1\\
\{x,y\}&\text{impossible}\\
\{z\}&q\le c-1,\ r\le b,\quad
 \neg(p\le b-1,\ q\le a),\quad\neg(p\le c-2,\ r\le a)\\
\{x,z\}&p\le b,\ q\le a+1,\ a<r\le b\\
\{y,z\}&p,q\le c-1,\ r\le a+1,\quad\neg(p\le b-1,\ q\le a)\\
\{x,y,z\}&p\le b,\ q,r\le a+1
\end{array}
$}
\end{equation}
Here $\neg(P,Q)$ means that $P$ and $Q$ cannot both hold at the same time.
To check the table, one uses Condition~$(\star)$ at 
the crossings and at the unselected points of the three lists. 
A missing $x$ excludes either $p\le b-1$ or $q\le a$; a missing 
$y$ excludes either $p\le c-2$ or $r\le a$; a missing $z$ excludes 
either $q\le c-2$ or $r\le b-1$.
A selected crossing forces the suffixes after it in both paths.
If $x$ and $y$ are selected, they force $z$. Combining these 
conditions gives the eight rows of \eqref{eq:path-cuts}.
There are no further restrictions: the two outputs in 
Condition~$(\star)$ are either outside the paths, 
later selected points in these suffixes, or crossings
covered by these conditions.

The counts for all the cases listed in 
\eqref{eq:path-cuts} are as follows. 
\begin{displaymath}
\resizebox{\textwidth}{!}{
$
\renewcommand{\arraystretch}{1.25}
\begin{array}{c|l}
\text{selected crossings}&\text{number of configurations}\\\hline
\varnothing&\begin{aligned}
 &(n-1)^3-(n-1)\bigl(a(b-1)+a(c-2)+(b-1)(c-2)\bigr)\\
 &\quad+a(b-1)^2+a(c-2)^2
\end{aligned}\\
\{x\}&b(a+1)(n-b)\\
\{y\}&(c-1)(a+1)(n-c+1)\\
\{x,y\}&0\\
\{z\}&(n-1)b(c-1)-ab(b-1)-a(c-2)(c-1)+a^2(b-1)\\
\{x,z\}&b(a+1)(b-a)\\
\{y,z\}&(a+1)\bigl((c-1)^2-a(b-1)\bigr)\\
\{x,y,z\}&b(a+1)^2
\end{array}
$}
\end{displaymath}
We now describe in detail the count for each row in \eqref{eq:path-cuts}.

{\bf Row 1: $\varnothing$.}
Start with $(n-1)^3$ triples. Let $E_x,E_y,E_z$ be the sets of
triples satisfying the three excluded pairs in \eqref{eq:path-cuts}, namely,
$E_x: p\le b-1, q\le a$; $E_y: p\le c-2, r\le a$; and
$E_z: q\le c-2, r\le b-1$.
Counting the choices for $p,q,r$ gives
$|E_x|=(n-1)a(b-1)$, $|E_y|=(n-1)a(c-2)$, and $|E_z|=(n-1)(b-1)(c-2)$.
Using $a<b-1<c-2$ to combine the bounds, their intersections have sizes
$|E_x\cap E_y|=a^2(b-1)$, $|E_x\cap E_z|=a(b-1)^2$,
$|E_y\cap E_z|=a(c-2)^2$ and $|E_x\cap E_y\cap E_z|=a^2(b-1)$.
Indeed, $E_x\cap E_y$ requires $p\le b-1$ and $q,r\le a$.
These bounds already imply $q\le c-2$ and $r\le b-1$, so
$E_x\cap E_y=E_x\cap E_y\cap E_z$.
The terms $+|E_x\cap E_y|$ and $-|E_x\cap E_y\cap E_z|$
cancel in inclusion-exclusion. The number of remaining triples is
\begin{multline*}
(n-1)^3-|E_x|-|E_y|-|E_z|+|E_x\cap E_z|+|E_y\cap E_z|=\\
\quad=(n-1)^3-(n-1)\bigl(a(b-1)+a(c-2)+(b-1)(c-2)\bigr)\\
\qquad\quad+a(b-1)^2+a(c-2)^2,
\end{multline*}
which is the first entry of the table.

{\bf Row 2:  $\{x\}$.}
The restrictions $p\le b$, $q\le a+1$, and $r\ge b$ give
$b$, $a+1$, and $n-b$ choices, respectively. Their product is
$b(a+1)(n-b)$.

{\bf Row 3:  $\{y\}$.}
The restrictions $p\le c-1$, $q\ge c-1$, and $r\le a+1$ give
$c-1$, $n-c+1$, and $a+1$ choices, respectively. Their product is
$(c-1)(a+1)(n-c+1)$.

{\bf Row 4:  $\{x,y\}$.}
Taking $(i,j)=x$ and $(k,l)=(k',l')=y$ in Condition~$(\star)$ forces
$(j+1,\min(l,l'))=(b+1,c)=z$. Since $z$ is not selected in this row,
the count is zero.

{\bf Row 5:  $\{z\}$.}
The bounds $q\le c-1$ and $r\le b$ initially give $(n-1)b(c-1)$
triples. To keep $x$ absent, we have to exclude the conjunction of
$p\le b-1$ and $q\le a$; there are $ab(b-1)$ such triples. To 
keep $y$ absent, we have to exclude the conjunction of
$p\le c-2$ and $r\le a$; there are $a(c-2)(c-1)$ such triples.
The overlap is $p\le b-1$ and $q,r\le a$, of size $a^2(b-1)$.
Inclusion-exclusion gives
\begin{displaymath}
(n-1)b(c-1)-ab(b-1)-a(c-2)(c-1)+a^2(b-1).
\end{displaymath}

{\bf Row 6:  $\{x,z\}$.}
The restrictions $p\le b$, $q\le a+1$, and $a<r\le b$ give
$b$, $a+1$, and $b-a$ choices, respectively. Their product is
$b(a+1)(b-a)$.

{\bf Row 7:  $\{y,z\}$.}
The bounds $p,q\le c-1$ and $r\le a+1$ initially give us
$(c-1)^2(a+1)$ triples. To keep $x$ absent,  we have to exclude
the conjunction of $p\le b-1$ and $q\le a$, there are $a(b-1)(a+1)$ 
such triples. Hence the count is $(a+1)((c-1)^2-a(b-1))$.

{\bf Row 8:  $\{x,y,z\}$.}
The restrictions $p\le b$ and $q,r\le a+1$ give
$b(a+1)^2$ triples.

Adding the eight entries gives
\begin{equation}
\label{eq:path-basic}
n^3-3n^2+1+(2n+1)a+2nb+(2n-1)c.
\end{equation}
This is the full count in Case 1. In the other cases, the choices counted
by the same table still satisfy $(\star)$, but additional choices become
possible at diagonal crossings. We count those additional choices separately.

\medskip\noindent\textbf{Case 2: $b=a+1$ and $c>b+1$.}

In this case, $x$ is on the diagonal. This means that, if $x$ 
is selected, it does not need to force all points above it.
The first column of the two paths may start later, provided 
that the second column starts no later than the first. 
If a row point other than $x$ is selected, however, both 
columns are forced as in the nonadjacent case. In terms of 
list positions, the extra choices have $p>b$ and $b\le q\le p$.
Put $g=c-b-1$ and $h=n-c+1$. Only three crossing choices change:
\begin{equation}
\label{eq:path-left-cuts}
\resizebox{11cm}{!}{
$
\renewcommand{\arraystretch}{1.2}
\begin{array}{c|l}
\text{selected crossings}&\text{extra choices}\\\hline
\{x\}&p>b,\ b\le q\le p;\quad r\ge b\text{ if }q\le c-2\\
\{x,z\}&p>b,\ b\le q\le\min(p,c-1),\ r\le b;
 \quad r=b\text{ if }p\le c-2\\
\{x,y,z\}&b<p\le c-1,\ b\le q\le p,\ r\le b
\end{array}
$}
\end{equation}
The bounds involving $c$ come from the two other crossings, exactly as in
\eqref{eq:path-cuts}. For example, in the first row, if $q\le c-2$ then
$z\notin Y$ requires $r\ge b$; if $q\ge c-1$, neither $y$ nor $z$
forces a restriction on $r$.
Counting the choices in \eqref{eq:path-left-cuts} gives, respectively,
\begin{gather*}
(g+h)\left(\binom{g+1}{2}+gh-1\right)+(n-1)\binom{h+1}{2},\\
\binom{g+1}{2}-1+bh(g+1),\quad\text{ and }\quad
b\left(\binom{g+2}{2}-1\right).
\end{gather*}
Indeed, for the first count, we sum over $q=b,\ldots,c-2$ and then
also over $q=c-1,\ldots,n-1$. In the first range there are
$\binom{g+1}{2}+gh-1$ pairs $(p,q)$ and $g+h$ choices for $r$;
in the second there are $\binom{h+1}{2}$ pairs and $n-1$ choices for $r$.
For the second count, split at $p=c-1$; for the third, sum $p-b+1$
over $p=b+1,\ldots,c-1$ and multiply by $b$.
The total correction simplifies to
\begin{equation}
\label{eq:path-left}
(n-1)\binom{n-a}{2}+(n-a)(c-1)-n-1.
\end{equation}
Consequently, the count in Case~2 is 
\eqref{eq:path-basic} plus \eqref{eq:path-left}.

\medskip\noindent\textbf{Case 3: $b>a+1$ and $c=b+1$.}

In this case, $z$ is diagonal. Only the choice $\{z\}$ 
gains extra configurations. Their positions satisfy 
$q>b$, $b\le r\le q$, with $p$ arbitrary.
Writing $h=n-c+1$, the correction is
\begin{equation}
\label{eq:path-right}
(n-1)\sum_{q=b+1}^{n-1}(q-b+1)
=(n-1)\left(\binom{h+1}{2}-1\right).
\end{equation}
For the other choices containing $z$, a selected point to its left forces
both column suffixes, so their counts do not change.
The count in Case 3 is \eqref{eq:path-basic} plus \eqref{eq:path-right}.

\medskip\noindent\textbf{Case 4: $b=a+1$ and $c=b+1$.}

In this case, both $x$ and $z$ are diagonal. 
We write $h=n-c+1$, so $n-1=b+h-1$.
The extra choices beyond \eqref{eq:path-cuts} are listed
below (a semicolon separates disjoint alternatives).
\begin{equation}
\label{eq:path-consecutive-cuts}
\resizebox{11cm}{!}{$
\renewcommand{\arraystretch}{1.15}
\begin{array}{c|l}
\text{selected crossings}&\text{extra choices}\\\hline
\{x\}&p=q=b,\ r<b;\quad p>b,\ b\le q\le p\quad(r\text{ arbitrary})\\
\{y\}&p>b,\ b\le r\le\min(p,q)\\
\{z\}&q>b,\ b\le r\le q\quad(p\text{ arbitrary})\\
\{x,z\}&p=q=b,\ r<b;\quad p>b,\ q=b,\ r\le b;
 \quad p>b,\ b<q\le p,\ b\le r\le q\\
\{y,z\}&p>b,\ b\le r\le q\le p\\
\{x,y,z\}&p>b,\ b\le r\le q\le p
\end{array}
$}
\end{equation}
The choices $\varnothing$ and $\{x,y\}$ do not change.
These restrictions follow by applying the column-start condition at both
diagonal crossings. At $x$, a late start in the first column requires
$b\le q\le p$; at $z$, a late start in the second requires
$b\le r\le q$. If only $y$ is selected, its two source columns instead
require $b\le r\le\min(p,q)$. The alternatives with $p=q=b$ account
for the possibility of selecting points before $z$ when both earlier
row parts are empty. The remaining absent-crossing restrictions are those
in \eqref{eq:path-cuts}.

The counts in the six rows of \eqref{eq:path-consecutive-cuts} are
\[
\renewcommand{\arraystretch}{1.2}
\begin{array}{c|l}
\text{selected crossings}&\text{number of extra configurations}\\\hline
\{x\}&b-1+(n-1)\left(\binom{h+1}{2}-1\right)\\
\{y\}&\dfrac{h(h+1)(2h+1)}6-h\\
\{z\}&(n-1)\left(\binom{h+1}{2}-1\right)\\
\{x,z\}&\binom{h+2}{3}+(b-1)h-1\\
\{y,z\}&\binom{h+2}{3}-1\\
\{x,y,z\}&\binom{h+2}{3}-1
\end{array}
\]
For example, the $\{y\}$ row is counted by
$\displaystyle h(h-1)+\sum_{j=1}^{h-1}j^2$, separating $r=b$ from $r>b$.
The last two rows count weakly ordered triples from $h$ positions,
excluding the triple $(b,b,b)$.
The count of the $\{x,z\}$ row gives
$(b-1)+b(h-1)+\binom{h+2}{3}-h$.
Adding the six rows, the correction equals the sum of
\eqref{eq:path-left} and \eqref{eq:path-right}, evaluated at $c=a+2$,
plus
\begin{equation}
\label{eq:path-both}
\frac{h(h+1)(2h+1)}6+3\binom{h+2}{3}-2h-2.
\end{equation}
The count in Case 4 is the sum of \eqref{eq:path-basic},
\eqref{eq:path-left}, \eqref{eq:path-right}, and \eqref{eq:path-both}.

It remains to add everything up.
First, we sum the common term \eqref{eq:path-basic} over all
$0\le a<b<c\le n$, then add \eqref{eq:path-left} for Cases 2 and 4,
\eqref{eq:path-right} for Cases 3 and 4, and \eqref{eq:path-both} for
Case 4 alone. This counts each configuration once and gives
\begin{displaymath}
\resizebox{\textwidth}{!}{$
\begin{aligned}
\sum_{a=0}^{n-2}\sum_{c=a+2}^{n}
 \left((n-1)\binom{n-a}{2}+(n-a)(c-1)-n-1\right)
 &=\frac{n^5-5n^3+4n}{8},\\
\sum_{b=1}^{n-1}b(n-1)\left(\binom{n-b+1}{2}-1\right)
 &=\frac{n^5+n^4-15n^3+23n^2-10n}{24},\\
\sum_{h=1}^{n-1}\left(\frac{h(h+1)(2h+1)}6+3\binom{h+2}{3}-2h-2\right)
 &=\frac{5n^4+6n^3-29n^2-30n+48}{24}.
\end{aligned}
$}
\end{displaymath}
Adding these three  totals with the forth total
\begin{displaymath}
\resizebox{\textwidth}{!}{$\displaystyle
\sum_{0\le a<b<c\le n}
\left(n^3-3n^2+1+(2n+1)a+2nb+(2n-1)c\right)
=\frac{n^2(n^2-1)(2n^2-1)}{12}
$}
\end{displaymath}
coming from \eqref{eq:path-basic}, gives the stated formula.
\end{proof}

\subsection{Conjecture}\label{sec:firstc.9}

It seems reasonable to conjecture that $r(n,n+1-i)$
is a polynomial in $n$ of degree $2i$ and leading 
coefficient $\frac{1}{i!}$.


\subsection{Non-empty rows}\label{sec:nonempty}

Here we prove:

\begin{proposition}\label{prop:nonempty}
The number of elements $Y\in R(n)$ for which every 
of the $n+1$ rows is nonempty is $(n+1)!$.
\end{proposition}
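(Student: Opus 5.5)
We argue by induction on $n$ via the top-layer extension of Subsection~\ref{sec:adding-one-top-layer}. Let $P(n)\subseteq R(n)$ be the set of elements all of whose $n+1$ rows are nonempty; the claim is $|P(n)|=(n+1)!$. For $n=0$ the only such element is $\{(0,0)\}$, so $|P(0)|=1=1!$.

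For the inductive step, we first check that extension restricts correctly to $P$. Every $Z\in R(n)$ is uniquely of the form $Y_A$, where $Y\in R(n-1)$ is its restriction and $A=Z_n$. Then $Z\in P(n)$ if and only if $Y\in P(n-1)$ and $A\neq\varnothing$. Indeed, the rows $0,\dots,n-1$ of $Z$ are exactly those of $Y$, and row $n$ of $Z$ is $A$. Hence
$$|P(n)|=\sum_{Y\in P(n-1)}\bigl|S(Y)\setminus\{\varnothing\}\bigr|,$$
and it suffices to show that $|S(Y)\setminus\{\varnothing\}|=n+1$ for every $Y\in P(n-1)$.

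To do this, we use the implication graph $G_Y$, whose closed sets are exactly $S(Y)$ by Lemma~\ref{lem:top-layer-extension}. Fix $Y\in P(n-1)$ and $j\in\{0,\dots,n-1\}$. Row $j$ of $Y$ contains some point $(i,j)$, and by the first part of \eqref{graph:arrows} this gives arrows $e\to j+1$ for all $i\le e\le j$; in particular $j\to j+1$. So $G_Y$ contains the path $0\to1\to\cdots\to n$. Consequently any closed $A$ containing $a$ contains every $b\ge a$, so a nonempty closed set must be a suffix $\{a,a+1,\dots,n\}$.

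Conversely, every arrow of $G_Y$ has target strictly greater than its source, as noted after \eqref{graph:arrows}. Hence each suffix $\{a,\dots,n\}$, for $0\le a\le n$, is closed. Therefore $S(Y)\setminus\{\varnothing\}$ consists of exactly these $n+1$ suffixes. Alternatively, in the language of Lemma~\ref{lem:threshold-admissible-row}, the threshold vector satisfies $t_j\le j-1$ for all $j\ge1$, which yields the same conclusion.

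Combining the two steps gives $|P(n)|=(n+1)\,|P(n-1)|=(n+1)!$. The only step needing care is verifying that the arrows $j\to j+1$ exist for every $j<n$, and this follows directly from the definition of $G_Y$ and the nonemptiness of each row, so I expect no genuine obstacle.
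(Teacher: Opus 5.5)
Your proof is correct, but it is organized differently from the paper's. The paper does not induct. It shows directly that $Y\in R(n)$ has all rows nonempty if and only if every row $j$ is a nonempty suffix $\{(i,j)\mid m_j\le i\le j\}$ with $0\le m_j\le j$. The count $\prod_{j=0}^{n}(j+1)=(n+1)!$ then follows at once.

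Its necessity step is the forcing $(i,k),(k,j),(k,j)\Rightarrow(k+1,j)$ whenever row $k$ is nonempty and $k<j$. This is exactly your arrow $j\to j+1$ in $G_Y$, read off in the new top row. Its sufficiency step observes that every forced point in $(\star)$ lies in the row of $v$ or $w$, weakly to the right of it, so suffix rows never violate $(\star)$. You get this for free from Lemma~\ref{lem:top-layer-extension}, since arrows of $G_Y$ strictly increase.

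So both arguments rest on the same instance of $(\star)$. Yours routes it through the $S(Y)$/implication-graph machinery and produces the recursion $|P(n)|=(n+1)\,|P(n-1)|$. In the threshold language of Section~\ref{sec:catalan}, this is the statement that $t_j\le j-1$ for all $j\ge1$ whenever $Y$ has all rows nonempty. The paper's version is self-contained and gives an explicit global description of these elements as independent choices of a suffix start $m_j$ in each row.
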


\begin{proof}
We will show that $Y\subset T(n)$ belongs to $R(n)$ 
and has the property that each row
of $Y$ is not empty if and only if, for every 
$j\in\{0,\ldots,n\}$ there exists 
a unique $m_j\in\{0,\ldots,j\}$ such that
\begin{displaymath}
(i,j)\in Y
\quad\Longleftrightarrow\quad
m_j\leq i\leq j.
\end{displaymath}
This immediately  implies the counting claim.
The sufficiency of the above condition follows directly from
Condition~$(\star)$, as the forcing part of the letter always
refers to a suffix of some row. Let us check  necessity.

Suppose first that $Y\in R(n)$ and every row is nonempty. Fix some element
$j\in\{0,\ldots,n\}$ and assume that $(k,j)\in Y$ for some $k<j$. Since
row $k$ is nonempty, there exists $i\leq k$ such that $(i,k)\in Y$. Applying
Condition~$(\star)$ to $(i,k)$, $(k,j)$ and  $(k,j)$
forces $(k+1,j)\in Y$. Repeating this argument shows that
the intersection of the $j$-th row of $T(n)$ with $Y$, if non-empty, 
is a suffix of that row. That is exactly what we needed to prove.
\end{proof}


\section{Recurrence over the threshold vectors}
\label{sec.recurrence}

\subsection{Closed rows and state multiplicities}

Let $A\subseteq\{0,\ldots,n\}$
and \(t=(t_0,\ldots,t_{n})\in\mathcal T_{n+1}\),
we say that {\em $A$ is closed with respect to $t$}
provided that \(i\in A\) and \(t_j\leq i\leq j\) imply \(j\in A\).
Set
$$
f(t)=\vert\{A\subseteq\{0,\ldots,n\}\mid
A\text{ is closed under }t\}\vert,
$$
and put \(f(())=1\). 

\begin{proposition}
\label{prop:closed-row-count}
Let \(n\geq 0\), \(t\in\mathcal T_{n+1}\), and \(k=t_{n}\). Then
$$
f(t)=f(t_0,\ldots,t_{k-1})
\bigl(1+f(t_k-k,\ldots,t_{n-1}-k)\bigr).
$$
\end{proposition}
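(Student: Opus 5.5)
The plan is to show that the closure relation defined by $t$ splits $\{0,\ldots,n\}$ into two independent blocks, $\{0,\ldots,k-1\}$ and $\{k,\ldots,n\}$. On the second block the top element $n$ is reachable from everything, which produces the factor $1+f(\cdots)$.

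First I record two consequences of \eqref{eq:threshold-bracket-condition} for $t\in\mathcal T_{n+1}$ with $k=t_n$. For $i<k$ we have $t_i\le i<k$. Applying the second property to $j=n$ gives $t_i\ge k$ for all $k\le i\le n$. Now consider a constraint ``$i\in A$ and $t_j\le i\le j$ $\Rightarrow$ $j\in A$''. If $j<k$, then $i\le j<k$. If $j\ge k$, then $i\ge t_j\ge k$. Hence every constraint has both endpoints in $\{0,\ldots,k-1\}$ or both in $\{k,\ldots,n\}$. Consequently $A$ is closed with respect to $t$ if and only if $A_1=A\cap\{0,\ldots,k-1\}$ and $A_2=A\cap\{k,\ldots,n\}$ are each closed under the constraints living on their own block. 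The constraints on the first block are exactly those of $(t_0,\ldots,t_{k-1})$. This vector lies in $\mathcal T_k$, since \eqref{eq:threshold-bracket-condition} only involves indices $\le j$. So the number of choices for $A_1$ is $f(t_0,\ldots,t_{k-1})$, with the convention $f(())=1$ covering $k=0$.

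Next I count the $A_2$. Since $t_n=k$, every $i\in\{k,\ldots,n\}$ satisfies $t_n\le i\le n$. So if $A_2\neq\varnothing$, then $n\in A_2$. This gives two cases: $A_2=\varnothing$, which contributes $1$, or $n\in A_2$. In the second case all constraints with target $n$ are automatically satisfied. The remaining constraints have $k\le t_j\le i\le j\le n-1$, so $B=A_2\setminus\{n\}\subseteq\{k,\ldots,n-1\}$ can be any set closed under them. After shifting indices by $-k$, these are exactly the constraints of $s=(t_k-k,\ldots,t_{n-1}-k)$. Conversely, any such $B$ together with $n$ gives a closed $A_2$.

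It remains to check that $s\in\mathcal T_{n-k}$, so that the count of such $B$ is $f(s)$. We have $0\le t_j-k\le j-k$ because $t_j\ge k$. The implication in \eqref{eq:threshold-bracket-condition} is invariant under the uniform shift by $-k$. When $k=n$ the vector $s$ is empty and $f(())=1$, which matches $n$ being free with no other elements. Multiplying the two independent counts gives the formula. The only step requiring care is the block-separation observation in the second paragraph; everything else is bookkeeping.
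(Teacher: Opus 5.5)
Your proof is correct and follows essentially the same route as the paper. Both arguments split $\{0,\ldots,n\}$ into the blocks $\{0,\ldots,k-1\}$ and $\{k,\ldots,n\}$ using $k\le t_j$ for $j\ge k$, note that a nonempty intersection with the second block must contain $n$, and count the remainder via the shifted vector. Your write-up is slightly more careful than the paper's: you remove $n$ before shifting, whereas the paper shifts all of $A\cap\{k,\ldots,n\}$, which includes the out-of-range index $n-k$. You also verify explicitly that the truncated and shifted vectors lie in $\mathcal T_k$ and $\mathcal T_{n-k}$.
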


\begin{proof}
Set $u=(t_0,\ldots,t_{k-1})$ and 
$v=(t_k-k,\ldots,t_{n-1}-k)$.
The threshold condition for $t_{n}=k$ reads $k\leq t_j$, for
$k\leq j\leq n$. Hence the restrictions of a closed subset to
$\{0,\ldots,k-1\}$ and $\{k,\ldots,n\}$ are independent, and the first
restriction can be any subset closed under $u$, giving $f(u)$ choices.

If $n\notin A$, then $A\cap\{k,\ldots,n\}=\varnothing$, since any
$i$ in this intersection would force $n\in A$. This gives one choice on
the second block. If $n\in A$, the set
$
\{i-k\mid i\in A\cap\{k,\ldots,n\}\}
$
can be any subset closed under $v$, giving $f(v)$ choices.
Conversely, these choices produce every subset closed under $t$. Therefore
$f(t)=f(u)(1+f(v))$.
\end{proof}

For \(n\geq0\) and \(t=(t_0,\ldots,t_n)\in\mathcal T_{n+1}\), define
$$
Q_n(t)=\{Y\in R(n-1)\mid t(Y)=t\}\qquad\text{ and }
\qquad q_n(t)=|Q_n(t)|,
$$
where \(Q_0((0))=R(-1)=\{*\}\) and \(q_0((0))=1\).

For \(t=(t_0,\ldots,t_{n-1},0)\in\mathcal T_{n+1}\) and
\(0\leq a\leq n-1\) with \(t_a=0\), set
$$
Z_a(t)=\{j\in\{a,\ldots,n-1\}\mid t_j=0\}
$$
and
$$
\mathcal P_a(t)=
\left\{s\in\mathcal T_n\ \middle|\
s_j=t_j\text{ for }j\notin Z_a(t),\quad
\min_{j\in Z_a(t)}s_j=0\right\}.
$$

\begin{proposition}\label{prop:state-weight-factorization}
Let \(t\in\mathcal T_{n+1}\).

\begin{enumerate}[$($a$)$]
\item\label{prop:swf2}
If \(t_n=n\), then
\begin{align}
		\label{eq:isolated-terminal-weight}
		q_n(t_0,\ldots,t_{n-1},n)
		=q_{n-1}(t_0,\ldots,t_{n-1}).
\end{align}
\item\label{prop:swf1}
If \(t_n=k\), for \(0<k<n\), then
\begin{align}
		\label{eq:state-weight-factorization}
		q_n(t_0,\ldots,t_n)
		=q_{k-1}(t_0,\ldots,t_{k-1})
		q_{n-k}(t_k-k,\ldots,t_n-k).
\end{align}
\item\label{prop:swf3}
If \(t_n=0\), then
\begin{align}
		\label{eq:terminal-zero-weight-recursion}
		q_n(t)=
		\sum_{\substack{0\leq a\leq n-1\\t_a=0}}
		\ \sum_{s\in\mathcal P_a(t)}q_{n-1}(s).
\end{align}
\end{enumerate}
\end{proposition}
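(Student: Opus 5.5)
The plan is to read everything off Lemma~\ref{lem:threshold-transition}. Every $Z\in R(n-1)$ is uniquely of the form $Y_A$ with $Y\in R(n-2)$ its restriction and $A=Z_{n-1}\in S(Y)$. Hence
\[
q_n(t)=\sum_{s\in\mathcal T_n}\ \sum_{Y\in Q_{n-1}(s)}\bigl|\{A\in S(Y)\mid t(Y_A)=t\}\bigr|.
\]
By Lemma~\ref{lem:threshold-admissible-row} both membership in $S(Y)$ and $t(Y_A)$ depend only on $s=t(Y)$. So $q_n(t)$ is the sum over pairs $(s,A)$ with $A$ closed under $s$ and the transition map sending $(s,A)$ to $t$, of $q_{n-1}(s)$. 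Each part of the proposition comes from identifying these pairs.

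\emph{Part (a).} If $t_n=n$, Lemma~\ref{lem:threshold-transition} forces $A=\varnothing$, since a nonempty $A$ gives $t_n=\mu_s(a)\le a\le n-1$. Then $s=(t_0,\dots,t_{n-1})$, which gives \eqref{eq:isolated-terminal-weight}.

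\emph{Part (c).} If $t_n=0$, then $A\neq\varnothing$ and $\mu_s(a)=0$, where $a=\min A$. The transition formula gives $t_j=s_j$ for $j\notin A$ and $t_j=0$ for $j\in A$. I will show that $A$ is exactly $Z_a(t)$. Since $A\subseteq\{a,\dots,n-1\}$ and $t_j=0$ on $A$, we have $A\subseteq Z_a(t)$. Conversely, take $j\ge a$ with $t_j=0$ and $j\notin A$. Then $s_j=0\le a\le j$, so closedness of $A$ forces $j\in A$, a contradiction. Given $A=Z_a(t)$, the vector $s$ agrees with $t$ off $Z_a(t)$. It remains to check that the conditions $A$ closed under $s$ and $\mu_s(a)=0$ correspond to $\min_{j\in Z_a(t)}s_j=0$, which is the definition of $\mathcal P_a(t)$.
\begin{itemize}
\item The condition $\mu_s(a)=\min_{j\ge a}s_j=0$ matches: for $j\ge a$ outside $Z_a(t)$ we have $s_j=t_j>0$, so $\mu_s(a)=0$ exactly when some $s_j=0$ with $j\in Z_a(t)$.
\item Closedness is automatic. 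Suppose $i\in A$ and $s_j\le i\le j$. If $s_j=0$ we must have $j\in A$, by the same argument as above. If $s_j>0$, then $j\notin Z_a$ would mean $t_j=s_j\le i$ and $t_i=0$, which violates \eqref{eq:threshold-bracket-condition} for $t$.
\end{itemize}
Summing over the admissible $a$ gives \eqref{eq:terminal-zero-weight-recursion}. Distinct $a$ give distinct $A$, so nothing is double counted.

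\emph{Part (b).} This is the main obstacle, because the product formula is not a single-step transition statement. My first approach is a direct bijection. Take $Z\in R(n-1)$ with $t(Z)=t$ and $t_n=k\in(0,n)$. Condition \eqref{eq:threshold-bracket-condition} gives $t_j\ge k$ for $k\le j\le n$, and no vertex below $k$ reaches $\{k,\dots,n\}$ in $G_Z$. I expect this to force $Z$ to split as follows:
\begin{itemize}
\item no point $(i,j)$ with $i<k\le j$ lies in $Z$, since such a point would produce an arrow $i\to j+1$ with $j+1\ge k$;
\item $Z$ is the union of $Z\cap T(k-2)$ and a translate of an element of $R(n-k-1)$ living on coordinates $\ge k$.
\end{itemize}
Condition~$(\star)$ should decouple across the two blocks, and the arrows of $G_Z$ should split accordingly, giving thresholds $(t_0,\dots,t_{k-1})$ and $(t_k-k,\dots,t_n-k)$.

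The delicate points are twofold. First, arrows of $G_Z$ into vertex $k$ come from the row $j=k-1$ of the lower block. Second, one must verify that the lower block realises exactly the state $(t_0,\dots,t_{k-1})$ on vertices $0,\dots,k-1$, with the extra vertex $k$ behaving correctly. If the direct approach gets messy, I would instead induct on $n$ using the transition lemma: a step with $A\subseteq\{k,\dots,n-1\}$ or $A=\varnothing$ acts only on the upper block, which reduces the factorisation to the cases already handled in (a) and (c).
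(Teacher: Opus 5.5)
\textbf{Parts (a) and (c).} These are correct and follow the paper's proof essentially step by step. In (a), a nonempty top row would give $t_n=\mu_s(a)\le a<n$. In (c), you identify $A=Z_a(t)$ and then match closedness of $A$ together with $\mu_s(a)=0$ to membership in $\mathcal P_a(t)$. Both arguments are exactly what the paper does.

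\textbf{Part (b).} This part is only a plan, and the plan as written has a hole at row $k-1$. You exclude points $(i,j)$ with $i<k\le j$. With only that, $Z$ is \emph{not} the union of $Z\cap T(k-2)$ and a block on coordinates $\ge k$, because row $k-1$ is unaccounted for. Your two ``delicate points'' (arrows into $k$ coming from row $k-1$, and an ``extra vertex $k$'' for the lower block) are symptoms of this.

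The missing observation, which is the heart of the paper's proof, is that the whole rectangle $\{(i,j)\mid i\le k-1\le j\}$ is empty. Your own argument already covers $j=k-1$. A point $(i,k-1)\in Z$ gives the arrow $i\to k$. But $k=t_n\le t_k\le k$ forces $t_k=k$, so no $i<k$ reaches $k$. The paper phrases the same point as: $i\to j+1\preceq_Z n$ contradicts $t_n=k$.

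\textbf{How this closes (b).} Once row $k-1$ is known to be empty, both delicate points disappear.
\begin{itemize}
\item There are no arrows from $\{0,\dots,k-1\}$ to $\{k,\dots,n\}$. A first-type arrow $e\to j+1$ with $e<k\le j+1$ needs a point $(i,j)$ with $i<k\le j+1$. A second-type arrow $e\to k'$ with $e<k\le k'$ needs a point $(i,j)$ with $i<k\le j$.
\item The lower block $Z\cap T(k-2)$ lies in $R(k-2)$, and its implication graph lives on exactly $\{0,\dots,k-1\}$.
\item Vertex $k$ is simply the bottom vertex of the upper block, with $t_k-k=0$.
\end{itemize}
Moreover, in Condition $(\star)$ and in the definition of $E_Z$, the other points have first coordinate in $[i,j]$, where $(i,j)$ is the first point. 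Since $Z$ now consists only of points with $j\le k-2$ or $i\ge k$, every pair or triple involved, together with its outputs, lies in a single block.

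It follows that:
\begin{itemize}
\item Condition $(\star)$ and $G_Z$ split as a disjoint union;
\item $t(Z)$ is the concatenation of $(t_0,\dots,t_{k-1})$ with the $k$-shift of $(t_k-k,\dots,t_n-k)$;
\item gluing any pair from $Q_{k-1}(t_0,\dots,t_{k-1})\times Q_{n-k}(t_k-k,\dots,t_n-k)$ gives an element of $Q_n(t)$.
\end{itemize}
That is precisely the paper's bijection.

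\textbf{Your fallback.} The fallback induction does not reduce to (a) and (c) as you claim. The predecessor state $s$ only satisfies $s_j\ge k$ for $j\ge k$, and $s_{n-1}$ may exceed $k$. So you would need a stronger inductive statement: $q$ factors at every cut $k$ with $t_j\ge k$ for all $j\ge k$, not only when $t_n=k$.
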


\begin{proof}
If \(t_n=n\), the top row is empty
by Lemma~\ref{lem:threshold-transition}. Deleting this top row 
provides a bijection between the sets $Q_n(t_0,\ldots,t_{n-1},n)$
and $Q_{n-1}(t_0,\ldots,t_{n-1})$. This gives
$q_n(t_0,\ldots,t_{n-1},n)=q_{n-1}(t_0,\ldots,t_{n-1})$
and proves Claim~\eqref{prop:swf2}.

Next we prove Claim~\eqref{prop:swf1}.
Take \(Y\in Q_n(t)\). We claim that $Y$ does not
contain any $(i,j)$ with $i<k\leq j+1$. Indeed, 
such a point would force a path $i\rightsquigarrow j+1$ in $G_Y$.
Since $t_n=k\leq j+1\leq n$, Lemma~\ref{lem:reachability-interval} 
then gives $j+1\preceq_Y n$, and hence $i\preceq_Y n$, 
contradicting the minimality of $k=t_n$. Hence $Y$ has the form
\begin{displaymath}
\xymatrix@R=0.5mm@C=0.5mm{
&&&&k&&&& \\
\ast&\ast&\ast&\ast&\bullet&\ast&\ast&\ast&\ast \\
{\color{red}\circ}&{\color{red}\circ}&{\color{red}\circ}&{\color{red}\circ}&
{\color{blue}\ast}&{\color{blue}\ast}&{\color{blue}\ast}&{\color{blue}\ast}&\\
{\color{red}\circ}&{\color{red}\circ}&{\color{red}\circ}&{\color{red}\circ}&
{\color{blue}\ast}&{\color{blue}\ast}&{\color{blue}\ast}&&\\
{\color{red}\circ}&{\color{red}\circ}&{\color{red}\circ}&{\color{red}\circ}&
{\color{blue}\ast}&{\color{blue}\ast}&&&\\
{\color{red}\circ}&{\color{red}\circ}&{\color{red}\circ}&{\color{red}\circ}&
{\color{blue}\ast}&&&&\\
{\color{red}\circ}&{\color{red}\circ}&{\color{red}\circ}&{\color{red}\circ}&
&&&&\\
{\color{violet}\ast}&{\color{violet}\ast}&{\color{violet}\ast}&&
&&&&\\
{\color{violet}\ast}&{\color{violet}\ast}&&&
&&&&\\
{\color{violet}\ast}&&&&
&&&&
}
\end{displaymath}
Here we just established that the {\color{red}red} rectangle is empty.
Now, set
$$
	\begin{aligned}
		Y^{\mathrm L}
		&=Y\cap\{(i,j)\mid 0\leq i\leq j\leq k-2\},\\
		Y^{\mathrm R}
		&=\{(i-k,j-k)\mid (i,j)\in Y,\ k\leq i\leq j\leq n-1\}.
	\end{aligned}
$$
There correspond to 
the {\color{violet}violet} and the {\color{blue}blue} triangles
in our picture, respectively. These two parts do not interact under
Condition~$(\star)$, since every horizontal
interval in $Y^{\mathrm L}$ ends before any horizontal
interval in $Y^{\mathrm R}$ begins. The threshold vectors 
for these parts are
$(t_0,\ldots,t_{k-1})$ and $(t_k-k,\ldots,t_n-k)$, respectively.
Then the coordinate decomposition gives a bijection
	$$
	Q_n(t)\longleftrightarrow
	Q_{k-1}(t_0,\ldots,t_{k-1})
	\times Q_{n-k}(t_k-k,\ldots,t_n-k),
	$$
which implies \eqref{eq:state-weight-factorization}.

Finally, we prove Claim~\eqref{prop:swf3}.
	Fix \(Y\in Q_n(t)\). Since \(t_n=0\), the top row
	\(A=Y_{n-1}\) is nonempty. Set \(a=\min A\), then
	\(X=Y\cap T(n-2)\) and \(s=t(X)\). By
	Lemma~\ref{lem:threshold-transition}, we have
	$$
	\min_{a\leq j\leq n-1}s_j=0\qquad\text{ and }
	\qquad
	t_j=
	\begin{cases}
		0,&j\in A,\\
		s_j,&j\notin A.
	\end{cases}
	$$
	In particular, \(A\subseteq Z_a(t)\). If
	\(j\in Z_a(t)\setminus A\), then \(s_j=t_j=0\). Since \(a\in A\),
	Lemma~\ref{lem:threshold-admissible-row} implies \(j\in A\), a
	contradiction. It follows that
	$
	A=Z_a(t).
	$
	Therefore \(s\in\mathcal P_a(t)\).
	
	Conversely, take \(s\in\mathcal P_a(t)\), then \(X\in Q_{n-1}(s)\) and set
	\(A=Z_a(t)\). We have $t_j\leq s_j$ for every $j$, so every subset closed under $t$ is also closed under $s$. The set $A$ is closed under $t$: if $i\in A$ and
	$t_j\leq i\leq j$, then \eqref{eq:threshold-bracket-condition} gives
	$t_j\leq t_i=0$, and hence $j\in A$. Thus $A$ is also closed under $s$,
	so $A\in S(X)$. Since $\displaystyle\min_{a\leq j\leq n-1}s_j=0$,
	Lemma~\ref{lem:threshold-transition} gives $t(X_A)=t$.
	
	Therefore we have a bijection:
	$$
	\{Y\in Q_n(t)\mid \min Y_{n-1}=a\}
	\longleftrightarrow
	\coprod_{s\in\mathcal P_a(t)}Q_{n-1}(s).
	$$
	Summing over all possible values of \(a\) proves
	\eqref{eq:terminal-zero-weight-recursion}.
\end{proof}

For $n\geq 0$, define 
$\displaystyle
b_n=\sum_{\substack{t\in\mathcal T_{n+1}\\t_n=0}}q_n(t)$. 
A few first values for the sequence $b$
are: $1,1,4,24,189$.

\subsection{Recurrence}

\begin{theorem}
	\label{thm:state-count-recurrences}
We have 
	\begin{align}
		\label{eq:weighted-state-total}
		r_n=\sum_{t\in\mathcal T_{n+1}}q_n(t)f(t)
		\qquad(n\geq0),
	\end{align}
	and
	\begin{align}
		\label{eq:primitive-weight-convolution}
		r_{n-1}=\sum_{t\in\mathcal T_{n+1}}q_n(t)=b_n+\sum_{k=1}^{n}r_{k-2}b_{n-k}
		\qquad(n\geq0).
	\end{align}
\end{theorem}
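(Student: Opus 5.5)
The first identity \eqref{eq:weighted-state-total} follows by regrouping \eqref{eq:state-count-by-implication-graphs} according to threshold vectors. Every $Z\in R(n)$ is uniquely $Y_A$ with $Y\in R(n-1)$ and $A\in S(Y)$, so $r_n=\sum_{Y\in R(n-1)}|S(Y)|$. By Lemma~\ref{lem:threshold-admissible-row}, $S(Y)$ is exactly the set of subsets closed under $t(Y)$, so $|S(Y)|=f(t(Y))$. Grouping the $Y$ by $t(Y)$ gives $\sum_{t}q_n(t)f(t)$. Here $t$ ranges over $\mathcal T_{n+1}$: Lemma~\ref{lem:threshold-bracket-condition} places $t(Y)$ in that set, and any unrealized $t$ simply has $q_n(t)=0$ (in fact all are realized by Theorem~\ref{thm:catalan-states}). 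The case $n=0$ is the convention $q_0((0))=1$, $f((0))=2=r_0$.

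For the first equality in \eqref{eq:primitive-weight-convolution}, note that the sets $Q_n(t)$, $t\in\mathcal T_{n+1}$, partition $R(n-1)$. Hence $\sum_t q_n(t)=|R(n-1)|=r_{n-1}$, with $r_{-1}=1$.

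For the convolution, split the sum by $k=t_n\in\{0,\dots,n\}$. The terms with $k=0$ give $b_n$ by definition.

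For $k=n\ge1$, Proposition~\ref{prop:state-weight-factorization}\eqref{prop:swf2} gives $q_n(t_0,\dots,t_{n-1},n)=q_{n-1}(t_0,\dots,t_{n-1})$. Appending $n$ to any vector in $\mathcal T_n$ stays in $\mathcal T_{n+1}$, so the sum equals $\sum_{s\in\mathcal T_n}q_{n-1}(s)=r_{n-2}$. This matches $r_{k-2}b_{n-k}$ with $b_0=q_0((0))=1$.

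For $0<k<n$, use \eqref{eq:state-weight-factorization}. Vectors $t\in\mathcal T_{n+1}$ with $t_n=k$ correspond bijectively to pairs $(u,v)$ with $u=(t_0,\dots,t_{k-1})\in\mathcal T_k$ and $v=(t_k-k,\dots,t_n-k)\in\mathcal T_{n-k+1}$ with $v_{n-k}=0$. This bijection must be checked against \eqref{eq:threshold-bracket-condition}. The condition $t_n=k$ forces $t_i\ge k$ for $k\le i\le n$. Then for $j<k$ the bracket condition involves only indices below $k$. For $j\ge k$ with $t_j\ge k$, it involves only indices $\ge k$, and it translates under the shift by $k$. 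Conversely, any such pair glues back to a vector in $\mathcal T_{n+1}$ with $t_n=k$. The sum therefore factors as $\bigl(\sum_{u\in\mathcal T_k}q_{k-1}(u)\bigr)\bigl(\sum_{v:\,v_{n-k}=0}q_{n-k}(v)\bigr)=r_{k-2}\,b_{n-k}$, using the first equality at level $k-1$.

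The main point needing care is this gluing bijection together with the boundary conventions: $r_{-1}=1$, $b_0=1$, and the degenerate cases $n=0$ and $k=n$, which fit the same formula. Once those are checked, summing over $k$ gives \eqref{eq:primitive-weight-convolution}.
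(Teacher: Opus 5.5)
Your proposal is correct and follows the same route as the paper: the first identity is the regrouping of \eqref{eq:state-count-by-implication-graphs} by threshold vectors, and the convolution comes from splitting according to $t_n$ and applying Proposition~\ref{prop:state-weight-factorization}. The paper's proof is only two lines; you additionally spell out the block bijection between $\{t\in\mathcal T_{n+1}\mid t_n=k\}$ and pairs $(u,v)$, and you check the boundary conventions $r_{-1}=1$ and $b_0=1$.
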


\begin{proof}
Equation~\eqref{eq:weighted-state-total} follows from
\eqref{eq:state-count-by-implication-graphs}. To prove
\eqref{eq:primitive-weight-convolution}, we
split the sum according to \(t_n=0\) and \(t_n=k>0\), 
and apply Proposition~\ref{prop:state-weight-factorization}.
\end{proof}

Theorem~\ref{thm:state-count-recurrences} provides a
recurrence for \(r_n\). This recurrence is implicit as it
is defined in terms of the elements of $\mathcal T_{n+1}$.

\subsection{Computational comparison}

For comparison, direct mask enumeration tests
$2^{(n+1)(n+2)/2}$ subsets of $T(n)$. The running times below were measured
in the same Python environment.

\begin{center}
\resizebox{\textwidth}{!}{
\begin{tabular}{c|r|r|r|r}
$n$&subsets tested&direct enumeration&formula \eqref{eq:weighted-state-total}&speed-up\\ \hline
4&$32{,}768$&$0.384$ s&less than $0.001$ s&more than $380$\\
5&$2{,}097{,}152$&$22.9$ s&$0.001$ s&about $20{,}000$\\
6&$268{,}435{,}456$&at least $50$ min${}^*$&$0.005$ s&more than $5\cdot10^5$\\
7&$6.87\cdot10^{10}$&not feasible&$0.024$ s&--
\end{tabular}
}
\end{center}

The top-row enumeration is substantially better than direct enumeration: at
the last step it tests $r_{n-1}2^{n+1}$ rows. The state recurrence is still
considerably faster.

\begin{center}
\resizebox{\textwidth}{!}{
\begin{tabular}{c|r|r|r|r}
$n$&rows tested&top-row enumeration&formula \eqref{eq:primitive-weight-convolution}&speed-up\\ \hline
5&$153{,}344$&$0.092$ s&$0.004$ s&$23$\\
6&$3{,}343{,}104$&$1.76$ s&$0.021$ s&$84$\\
7&$85{,}177{,}088$&$39.3$ s&$0.11$ s&about $350$\\
8&$2.48\cdot10^9$&at least $18$ min${}^*$&$0.61$ s&more than $1{,}800$
\end{tabular}
}
\end{center}

Here ${}^*$ marks a conservative estimate from the number of candidates; all
other displayed times were measured directly. The speed-up grows because the
state recurrence stores one weight for each threshold state instead of listing
all elements of $R(n)$.

\section{Lattice structure}\label{s19}

\subsection{Basics, atoms, coatoms and join irreducible elements}\label{s19.1}

The set $R(n)$ has the obvious structure of a complete meet semilattice
given by the intersection. Consequently, $R(n)$, being finite, is a lattice.
The join is given by the intersection of all elements of $R(n)$ that contain
all inputs of the join. As all singletons belong to $R(n)$, 
they are exactly the atoms and, as a lattice,
$R(n)$ is both atomic and atomistic. The join irreducible elements
are the atoms.

From Proposition~\ref{prop:rnn+1} and Proposition~\ref{prop:rnn}, 
it follows that the 
coatoms are exactly the elements $T(n)\setminus\{(0,i)\}$,
for $i=0,1,\dots,n$. In particular, we have $n+1$ coatoms.

\subsection{Meet irreducible elements}\label{s19.2}

Similarly to the situation studied in \cite{Maz}, description of 
the meet irreducible elements of $R(n)$ is much more complicated.

\subsubsection{First family}\label{s19.2.1}

For any $(i,j)\in T(n)$, define $N(i,j)$ as $T(n)$ with the points
$(0,j),(1,j),\dots,(i,j)$ removed. Note that we have, in total, 
$\binom{n+2}{2}$ elements of this form.
Here is an example: the element $N(1,2)$ for $n=3$
(the points inside $N(1,2)$ are represented by $\bullet$
and the points outside $N(1,2)$ are represented by $\circ$):
\begin{displaymath}
\xymatrix@C=1mm@R=1mm{
\bullet&\bullet&\bullet&\bullet\\ 
\circ&\circ&\bullet&\\ 
\bullet&\bullet&&\\ 
\bullet&&&\\ 
}
\end{displaymath}

\begin{lemma}\label{lem:mi-1}
Each $N(i,j)$ is a meet irreducible element of $R(n)$.
\end{lemma}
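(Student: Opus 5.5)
To show $N(i,j)$ is meet irreducible in the finite lattice $R(n)$, I will show two things: that $N(i,j)\in R(n)$, and that $N(i,j)$ has a unique upper cover. Equivalently, every $X\in R(n)$ with $X\supsetneq N(i,j)$ contains one fixed element. The natural candidate for this unique cover is $N(i-1,j)$, meaning $N(i,j)\cup\{(i,j)\}$. For $i=0$ the candidate is $T(n)$ itself, and in that case $N(0,j)$ is a coatom, which has already been described.

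First I will check Condition~$(\star)$ for $N(i,j)$. The only way to fail is for the output $(\max(k,k'),\max(l,l'))$ or $(j'+1,\min(l,l'))$ to be some removed point $(e,j)$ with $e\le i$, while all three inputs lie in $N(i,j)$. I split into the two outputs.

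\emph{Second output.} This has first coordinate $j'+1\ge 1$ and lies in row $\min(l,l')=j$. Then one of $v,w$ is in row $j$ with first coordinate $\ge j'+1>e$. Meanwhile $u=(i',j')$ satisfies $i'\le k\le j'$, so $j'+1\le e\le i$ would force $k\le j'<i$. Hence the row-$j$ input of $v,w$ has first coordinate $k\le j'<e\le i$, so it is itself removed, which is a contradiction.

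\emph{First output.} This lies in row $\max(l,l')=j$, so some input in row $j$ has first coordinate $\max(k,k')$ or smaller. That input has first coordinate $\le e\le i$, so it is removed, again a contradiction.

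Thus closedness reduces to the observation that the removed set is an initial segment of row $j$, and both outputs dominate some input in first coordinate and row. I will write this out carefully, but I expect it to be routine.

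Second, let $X\in R(n)$ with $X\supsetneq N(i,j)$. Then $X$ contains some $(e,j)$ with $e\le i$. I claim $X\ni(e',j)$ for all $e\le e'\le j$, and in particular $(i,j)\in X$. This follows the suffix argument from Propositions~\ref{prop:rnn} and~\ref{prop:nonempty}. For $e<e'$, apply $(\star)$ with $u=(0,e'-1)$ and $v=w=(e,j)$. Here $(0,e'-1)\in N(i,j)$ provided $e'-1\ne j$, which holds since $e'\le j$, so $e'-1<j$. We have $0\le e\le e'-1\le j$, and $e'-1<j=\min(l,l')$, so the condition forces $(e',j)$. Note that $u$ lies in row $e'-1\neq j$ and therefore belongs to $N(i,j)$. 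The edge case $e=j$ occurs only if $i=j$, where $(i,j)=(e,j)$ directly.

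Hence every strict upper bound of $N(i,j)$ in $R(n)$ contains $N(i,j)\cup\{(i,j)\}$. For $i\ge1$, this set equals $N(i-1,j)\in R(n)$ by the first step. So $N(i-1,j)$ is the unique upper cover, and $N(i,j)$ cannot be written as a meet of strictly larger elements. For $i=0$, the unique cover is $T(n)$.

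The main obstacle is the first step: the bookkeeping needed to verify that $N(i,j)$ is closed. The covering argument is a direct reuse of the suffix-forcing trick.
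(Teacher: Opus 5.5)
Your proposal is correct and follows the same route as the paper: $N(i,j)$ is closed because every row of it is a suffix (your case check makes explicit that both outputs of Condition~$(\star)$ lie in the row of one of $v,w$, weakly to the right of that input), and $N(i,j)\cup\{(i,j)\}$ is the unique cover, obtained by suffix-forcing along row $j$ with $u=(0,e'-1)$. The only blemish is the phrase ``first coordinate $\ge j'+1>e$'' in the second-output case, which is false as written because the row-$j$ input has first coordinate $k\le j'$; your next line uses the correct chain $k\le j'<e\le i$, so the argument is unaffected.
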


\begin{proof}
Each  $N(i,j)$ belongs to $R(n)$ as each row of $N(i,j)$ is a suffix.
If we add to $N(i,j)$ some $(s,j)$, then Condition~$(\star)$
forces the elements $(s+1,j)$, then $(s+2,j)$ and so on, all the way up to
$(i,j)$. This means that $N(i,j)\cup\{(i,j)\}$ is the only element of
$R(n)$ which covers $N(i,j)$. Therefore $N(i,j)$ is meet irreducible.
\end{proof}

\subsubsection{Second family}\label{s19.2.2}

Choose integers $1\leq d\leq a\leq c\leq b\leq n$.
Define the northeast rectangle
\[
Q_{\mathrm{NE}}=
\{(x,y)\in T(n):d\leq x\leq a,\ c\leq y\leq b\}
\]
and the southwest rectangle
\[
Q_{\mathrm{SW}}
=\{(x,y)\in T(n):0\leq x\leq d-1,\ d-1\leq y\leq c-1\}.
\]
Both are genuine axis-parallel rectangles contained in $T(n)$.
The north-east corner of $Q_{\mathrm{SW}}$
is $\operatorname{NE}(Q_{\mathrm{SW}})=(d-1,c-1)$
and the south-west corner of $Q_{\mathrm{NE}}$
is $\operatorname{SW}(Q_{\mathrm{NE}})=(d,c)$,
so the rectangles are diagonally adjacent.  Set
\[
M(a,b,c,d)=T(n)\setminus
\bigl(Q_{\mathrm{SW}}\cup Q_{\mathrm{NE}}\bigr).
\]
Note that we have, in total, 
we have exactly $\binom{n+3}{4}$ elements of the form $M(a,b,c,d)$.
Here is an example:  $M(3,5,4,2)$ for $n=6$:
\begin{displaymath}
\xymatrix@C=1mm@R=1mm{
\bullet&\bullet&\bullet&\bullet&\bullet&\bullet&\bullet\\ 
\bullet&\bullet&\circ&\circ&\bullet&\bullet&\\ 
\bullet&\bullet&\circ&\circ&\bullet&\\  
\circ&\circ&\bullet&\bullet\\ 
\circ&\circ&\bullet&\\ 
\circ&\circ&&\\ 
\bullet&&&\\ 
}
\end{displaymath}

\begin{lemma}\label{lem:mi-2}
Each $M(a,b,c,d)$ is a meet irreducible element of $R(n)$.
\end{lemma}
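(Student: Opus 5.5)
The plan is to use the same criterion as in Lemma~\ref{lem:mi-1}. Since $R(n)$ is closed under intersections, an element $M\in R(n)$ is meet irreducible if and only if there is a single point $p\notin M$ with the following property: every $X\in R(n)$ with $M\subsetneq X$ contains $p$. When this holds, the smallest element of $R(n)$ containing $M\cup\{p\}$ is the unique upper cover of $M$. So the proof splits into two steps. First we show $M(a,b,c,d)\in R(n)$. Then we find such a point $p$ and show that adding any point $q\in Q_{\mathrm{SW}}\cup Q_{\mathrm{NE}}$ to $M=M(a,b,c,d)$ forces $p$ via Condition~$(\star)$, possibly in several steps.

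\textbf{Step 1: $M\in R(n)$.} Take $u=(i,j)$, $v=(k,l)$ and $w=(k',l')$ in $M$ with $i\le k,k'\le j\le l,l'$. We must check that neither output of $(\star)$ lands in $Q_{\mathrm{SW}}$ or $Q_{\mathrm{NE}}$.

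For the output $(\max(k,k'),\max(l,l'))$, suppose it lies in one of the rectangles. Then the realizing input, say $v$, has the same row and column as the output. Every point of each rectangle is in $T(n)\setminus M$, so $v$ itself would be outside $M$, a contradiction. The only exception is when $v$ and $w$ jointly produce a point that neither of them is. Then $k$ and $l'$ come from different points, and this sub-case needs a short check using $j\le l,l'$ and $k,k'\le j$.

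For the output $(j+1,\min(l,l'))$, this point can fall into a rectangle only in two ways. Either $j+1\le d-1$ and $\min(l,l')\le c-1$, or $d\le j+1\le a$ and $c\le \min(l,l')\le b$. In each case we locate $u$, or one of $v,w$, inside a rectangle, which is again a contradiction. The diagonal adjacency of the two rectangles, with $\operatorname{NE}(Q_{\mathrm{SW}})=(d-1,c-1)$ and $\operatorname{SW}(Q_{\mathrm{NE}})=(d,c)$, is exactly what makes the case $j=d-1$ harmless. This is routine bookkeeping.

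\textbf{Step 2: the forced point.} The natural candidate is $p=\operatorname{SW}(Q_{\mathrm{NE}})=(d,c)$; I would test it on small cases first, the alternative being $(d-1,c-1)$. The idea is to push any added point toward $p$, treating the two rectangles separately.

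For $q=(x,y)\in Q_{\mathrm{NE}}$, the within-row forcing used in Lemma~\ref{lem:mi-1} moves $q$ within its row. To move down, take $u=(0,x-1)\in M$, which is possible since $x-1\le d-1$ means $(0,x-1)$ lies below $Q_{\mathrm{SW}}$ or to the left of it; one must check this. With $v=w=q$, the bottom line of $(\star)$ gives $(x,y)$-type points in lower rows. Using this with the middle line of $(\star)$, I expect to move from any point of $Q_{\mathrm{NE}}$ down to row $c$ and left to column $d$.

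For $q=(x,y)\in Q_{\mathrm{SW}}$, take $u=q$ and $v=w=(x,l)$ with $l\ge c$; the point $(x,l)$ lies in $M$ since $x<d$ and $l\ge c$. The bottom line of $(\star)$ then forces $(y+1,l)$. If $y+1\le a$, this lands in column range $[d,a]$ of $Q_{\mathrm{NE}}$, and we reduce to the previous case. Otherwise we first move $q$ up within $Q_{\mathrm{SW}}$ to row $c-1$, or right to column $d-1$, using $(\star)$ with auxiliary points of $M$, and only then apply this trick.

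The main obstacle is this Step 2 case analysis: choosing auxiliary points of $M$ for each position of $q$, and handling the boundary cases $d=a$, $c=b$, $a=c$ and $d=1$, where some auxiliary points fall on the diagonal or leave $T(n)$. If $(d,c)$ fails for some positions of $q$, I would switch to $p=(d-1,c-1)$ and rerun the same scheme.
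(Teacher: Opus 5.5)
Your reduction (show $M\in R(n)$, then find one point $p\notin M$ contained in every $X\in R(n)$ with $M\subsetneq X$) is sound, and Step~1 goes through as you sketch. The gap is in Step~2. The forced point is neither $(d,c)$ nor $(d-1,c-1)$, and pushing points of $Q_{\mathrm{NE}}$ down and to the left is impossible.

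In any instance of $(\star)$, the first output has column $\max(k,k')\geq i,k,k'$ and the second has column $j+1>i,k,k'$. So closing $M\cup\{q\}$ only adds points whose column is at least that of $q$. From a point of $Q_{\mathrm{NE}}$ in a column $x>d$ you therefore never reach $(d,c)$. Your triple $(0,x-1),q,q$ is not even admissible, since $(\star)$ needs $k\leq j$, i.e.\ $x\leq x-1$.

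More decisively, $(a,b)$ is the north-east maximal point of $T(n)\setminus M$, and adding it forces nothing:
\begin{itemize}
\item Both outputs with $(a,b)$ in the role of $u$, and the first output with $(a,b)$ as $v$, lie weakly north-east of $(a,b)$. Every such point other than $(a,b)$ lies in $M$.
\item The second output with $v=(a,b)$ and $l'<b$ equals the second output of $(u,w,w)$, which is already in $M$.
\item If $l'\geq b$, the second output is $(j+1,b)$ with $j+1>a$, again in $M$.
\end{itemize}
Hence $M\cup\{(a,b)\}\in R(n)$ is an upper cover of $M$ that misses $(d,c)$ whenever $(a,b)\neq(d,c)$. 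For instance, for $n=2$, $M(1,2,1,1)=\{(0,1),(0,2),(2,2)\}$ is covered by $\{(0,1),(0,2),(1,2),(2,2)\}$, which contains neither $(1,1)$ nor $(0,0)$.

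Since $M\cup\{(a,b)\}$ itself lies strictly above $M$, your criterion forces $p=(a,b)$, and propagation must go to the right and upward.
\begin{itemize}
\item For $(x,y)\in Q_{\mathrm{NE}}$ with $x<a$, the triple $(x,x),(x,y),(x,y)$ forces $(x+1,y)$. Here $(x,x)\in M$ because $d\leq x<c$.
\item For $(x,y)\in Q_{\mathrm{NE}}$ with $y<b$, the triple $(0,y),(x,y),(0,y+1)$ forces $(x,y+1)$.
\item For $q=(x,y)\in Q_{\mathrm{SW}}$ with $y<a$, your triple $q,(x,c),(x,c)$ yields $(y+1,c)\in Q_{\mathrm{NE}}$.
\item For $q=(x,y)\in Q_{\mathrm{SW}}$ with $y\geq a$, your fallback of moving $q$ up or right does not help, since it keeps $y+1>a$. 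But then $y\geq d$, and the triple $q,(x,c),(d,y)$ forces $(d,c)$ directly.
\end{itemize}
Combined with the closedness of $M\cup\{(a,b)\}$, this shows it is the unique cover of $M$, which is how the paper argues.
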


\begin{proof}
Let $M=M(a,b,c,d)$.  We start by proving that 
$M(a,b,c,d)$ belongs to $R(n)$. Note that if we look at 
the bottom part of $M(a,b,c,d)$ between the rows
$0$ through $c-1$, each row in this part is a suffix.
Therefore we cannot apply Condition~$(\star)$
to create new points in $Q_{\mathrm{SW}}$ from
outside of it. Therefore the only possible potential
problems should be related to creation of 
new points in $Q_{\mathrm{NE}}$ from the rest of $M$.
However, if we take three (not necessarily different) 
points satisfying 
the hypotheses in  Condition~$(\star)$ which create
a new point in $Q_{\mathrm{NE}}$, the point that is 
most south-west out of these three must belong to
$Q_{\mathrm{SW}}$, by the definition of the latter.
Since $Q_{\mathrm{SW}}$ is completely removed from $M$,
it follows that $M\in R(n)$.

To prove that $M$ is meet irreducible, it is enough to
show that the element $M\cup\{(a,b)\}$ is the only cover of $M$.
Note that $(a,b)$ is the most north-east point of $T(n)$
that is not in $M$. Hence, adding it to $M$, produces an element
of $R(n)$. In other words, $M\cup\{(a,b)\}$ is indeed a
cover of $M$. To prove uniqueness, we need to show that 
addition of any other point inside 
$Q_{\mathrm{NE}}\cup Q_{\mathrm{SW}}$ to $M$ eventually forces
$(a,b)$ by Condition~$(\star)$.

Inside $Q_{\mathrm{NE}}$, adjoining one point propagates 
successively to the right and upward which eventually terminates 
at $(a,b)$.  Each elementary step is one of the following 
two instances of Condition~$(\star)$, namely: 
$(u,v)\rightsquigarrow(u+1,v)$ and $(u,v)\rightsquigarrow(u,v+1)$
where the other two points used lie immediately outside the
removed rectangle and hence belong to $M$.  
Moreover, any point $(i,j)\in Q_{\mathrm{SW}}$ forces
$(d,c)\in Q_{\mathrm{NE}}$ by applying Condition~$(\star)$
to $(i,j)$, $(i,c)$ and $(d,j)$, where all three points 
are in $M$. This completes the proof.
\end{proof}

\subsubsection{Classification}\label{s19.2.3}

\begin{theorem}\label{thm:mi}
Every meet irreducible element of $R(n)$
is of the form $N(i,j)$, for some
$0\leq i\leq j\leq n$, or of the form
$M(a,b,c,d)$, for some $1\leq d\leq a\leq c\leq b\leq n$.
\end{theorem}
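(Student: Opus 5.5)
Since $R(n)$ is a finite closure system (closed under intersections), an element $X\in R(n)$ is meet irreducible if and only if it has a unique upper cover. Equivalently, there is a point $p\notin X$ with $p\in\overline{X\cup\{q\}}$ for every $q\in T(n)\setminus X$, where $\overline{\,\cdot\,}$ denotes closure under Condition~$(\star)$. Then $X$ is a maximal element of $R(n)$ not containing $p$. The plan is therefore to fix $p=(a,b)$ and to classify the maximal closed sets avoiding $p$. We will show that each such set is either $N(a,b)$ or some $M(a,b,c,d)$, with $(a,b)$ the north-east corner of $Q_{\mathrm{NE}}$.

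\textbf{Step 1 (structure of the missing set).} Let $X$ be maximal avoiding $p=(a,b)$, and let $D=T(n)\setminus X$. Maximality means every $q\in D$ forces $p$ from $X\cup\{q\}$. We first show that in row $b$ the missing points of $D$ form a prefix $\{(0,b),\dots,(a',b)\}$ together with $p$. Note that $N(a,b)$ is a closed set avoiding $p$. If $(0,b-1)$-type points are available (namely the first column rows below), then row $b$ must be a suffix starting after $a$. Then we split according to whether $X\cap\{(0,b)\}$ is empty. If $(0,b)\notin X$, we argue that every point of $T(n)$ outside row $b$, and every point $(s,b)$ with $s>a$, lies in $X$: adding any such point to $X$ cannot force $p$, because $N(a,b)\cup\{q\}$ closure avoids $p$. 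So maximality gives $X\supseteq N(a,b)$, hence $X=N(a,b)$. Here we use that a closed set containing $N(a,b)$ and missing $p$ must equal $N(a,b)$, by the suffix propagation in Lemma~\ref{lem:mi-1}.

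\textbf{Step 2 (the case $(0,b)\in X$).} Suppose some point of column $0$ in row $b$ is present. Let $d\ge 1$ be minimal such that $(d,b)\notin X$ ranges up to $a$, so that row $b$ misses exactly $[d,a]$. Next let $c\le b$ be minimal such that rows $c,\dots,b$ all miss $[d,a]$. The missing point $(d,c)$ must be explained: since $(d,c-1)$ or the pattern below is in $X$, Condition~$(\star)$ with a point $(i,j)$ with $i<d\le j$ would force $(d,c)$. Hence no point $(i,j)$ with $i\le d-1$ and $d-1\le j\le c-1$ lies in $X$, that is $Q_{\mathrm{SW}}\subseteq D$. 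Using the middle line of $(\star)$ with $(i,j)\in X$, we then show that $Q_{\mathrm{NE}}\subseteq D$: if $(x,y)\in Q_{\mathrm{NE}}$ were in $X$, the right/up propagation from the proof of Lemma~\ref{lem:mi-2} would produce $p$. So $D\supseteq Q_{\mathrm{SW}}\cup Q_{\mathrm{NE}}$, that is $X\subseteq M(a,b,c,d)$. Since $M(a,b,c,d)$ is closed and avoids $p$ by Lemma~\ref{lem:mi-2}, maximality yields $X=M(a,b,c,d)$. The same inclusion argument finishes Step~1 as well.

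\textbf{Main obstacle.} The hard step is showing that $D$ contains the whole rectangles, in particular $Q_{\mathrm{SW}}$, rather than just the points forced by local reasoning. The choice of $c$ and $d$ must be made canonically, for instance $c$ as the lowest row of a maximal vertical run of missing $(d,\cdot)$ above the diagonal. I would attempt a direct contradiction: take a point of $X$ in the candidate $Q_{\mathrm{SW}}$ chosen extremal (most north-east). Combine it with points of $X$ at $(i,c)$ or $(d,j)$, which are present by minimality of $c$ and $d$, to force a point of $Q_{\mathrm{NE}}$ and hence $p$. This is the same triple used at the end of the proof of Lemma~\ref{lem:mi-2}, run in reverse. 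If this fails, the fallback is an induction on $n$ that removes the top row, using Lemma~\ref{lem:top-layer-extension}.
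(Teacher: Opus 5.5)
Your outer framework is correct. $X$ is meet irreducible exactly when it is a maximal element of $R(n)$ avoiding some point $p$. Once you have $X\subseteq N(a,b)$ or $X\subseteq M(a,b,c,d)$, maximality together with Lemmas~\ref{lem:mi-1} and~\ref{lem:mi-2} forces equality, which is a cleaner finish than the paper's region-by-region clearing of the blue, violet and teal areas. However, the structural part, which is the real content of the theorem, is missing or wrong at three points.

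\emph{First}, you never show that $D$ lies in the quadrant $\{(i,j)\mid i\le a,\ j\le b\}$. The paper starts from the observation that, for every north-east maximal $q\in D$, the set $X\cup\{q\}$ is closed. Each output of $(\star)$ involving $q$ weakly dominates $q$, except $(j+1,l')$ coming from $u=(i,j)\in X$, $v=q=(k,l)$, $w=(k',l')\in X$ with $l'<l$ (up to swapping $v,w$), and that point is already forced by $u,w,w$. Hence $p$ is the unique north-east maximal point of $D$. Without this you control neither the rows above $b$ nor the points to the right of $p$, and all later triples rely on them.

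\emph{Second}, your description of row $b$ of $D$ as a prefix together with $p$ is false. For $n=2$, $M(2,2,2,1)=\{(1,1),(0,2)\}$ misses $(1,2)$ and $(2,2)$ but not $(0,2)$. Step~2 then silently assumes the correct statement, that row $b$ of $D$ is an interval $\{(d,b),\dots,(a,b)\}$. This needs an argument, e.g.\ the paper's: if $(i,b)\in X$ for some $d<i<a$, take $i$ maximal and $j<i$ maximal with $(j,b)\in D$. Then $X\cup\{(j,b)\}$ is closed, since any output involving $(j,b)$ either already lies in $X$ or is also produced with $(i,b)$ in its place, and this gives a second cover. Your Step~1 argument for $X=N(a,b)$ is circular until you know $X\subseteq N(a,b)$, which is precisely the case $d=0$ of this interval statement.

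\emph{Third}, the inclusion $Q_{\mathrm{SW}}\subseteq D$, which you flag as the main obstacle, is not proved. The auxiliary points you invoke, $(d,c-1)$, $(i,c)$ and $(d,j)$, are not known to lie in $X$. You also never verify $c\ge a$, without which $M(a,b,c,d)$ is not defined.

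Your choice of $c$ is nevertheless the right one, and with it the gap closes quickly. Take $c$ minimal with $a\le c\le b$ and $\{d,\dots,a\}\times\{c,\dots,b\}\subseteq D$. Then:
\begin{itemize}
\item $Q_{\mathrm{NE}}\subseteq D$ holds by definition.
\item Minimality of $d$ gives $(i,b)\in X$ for all $i<d$.
\item If $c>a$, minimality of $c$ gives $(x,c-1)\in X$ for some $d\le x\le a$.
\end{itemize}
Now suppose $(i,j)\in X\cap Q_{\mathrm{SW}}$. For $j<a$, the triple $(i,j),(i,b),(i,b)$ forces $(j+1,b)\in D$. For $a\le j\le c-1$, the triple $(i,j),(x,c-1),(i,b)$ forces $(x,b)\in D$. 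Either way this contradicts closedness. Hence $X\subseteq M(a,b,c,d)$, and maximality finishes the proof; the case $a=b$ is included, with $c=a$.

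Completed like this, your route is genuinely shorter than the paper's. The paper defines $c$ through column $d-1$ (least $c\ge d$ with $(d-1,c)\in Y$), treats $a=b$ separately, and then clears three further regions. Your route also sidesteps a thin spot there: with the paper's $c$, forcing via $(d-1,c)$ only reaches the rows $j<a$ of $Q_{\mathrm{SW}}$. For instance, for $n=3$ the set $\{(0,2),(0,3),(1,3),(3,3)\}$ is closed, misses $(2,3)$, has $d=2$ and $c=3$ in the paper's sense, and still contains $(0,2)\in Q_{\mathrm{SW}}$. It fails to be meet irreducible only because adjoining $(1,2)$ yields a second cover, so no pure forcing argument can exclude it.
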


\begin{proof}
Let $Y\in R(n)$ be meet irreducible. Consider
$D:=T(n)\setminus Y\neq\varnothing$. If $(a,b)\in D$
is such no $(u,v)\in D\setminus\{(a,b)\}$ 
satisfies $a\leq u$ and $b\leq v$ at the same time, 
then $Y\cup\{(a,b)\}\in R(n)$
and hence is a cover of $Y$. Therefore the meet
irreducibility of $Y$ implies that such $(a,b)$
is unique, moreover, every $(i,j)\in D$ 
eventually forces $(a,b)$ relative to $Y$
by Condition~$(\star)$. Consequently, 
the whole of $D$ is contained in 
$\{(i,j)\,:\,i\leq a\text{ and }j\leq b\}$.

Let us now look at the $b$-th row of $D$ which is 
a subset of the set $\{(0,b),(1,b),\dots,(a,b)\}$.
Let $d\leq a$ be minimal such that  $(d,b)\in D$.
We seek to show that the $b$-th row of $D$ 
is  $\{(d,b),(d+1,b),\dots,(a,b)\}$.
Assume this is not the case and some 
$(i,b)$ is missing from $D$, where $d<i<a$.
Choose $i$ maximal with such property and
let $j$ be maximal such that $j<i$
and $(j,b)\in D$. Since $Y$ is meet irreducible,
adding $(j,b)$ to $Y$ eventually forces $(a,b)$.
Since $Y$ contains everything in all rows above
row $b$, adding $(j,b)$ to $Y$ can only force
new elements in row $b$, that is, elements to the
right of $(i,b)$. However, if $(j,b)$ together with
some $p$ and $q$ force such a new element, then 
$(i,b)$ together with
the same $p$ and $q$ force the same new 
element, contradicting $Y\in R(n)$. Therefore,
indeed, the $b$-th row of $D$ 
is exactly the set $\{(d,b),(d+1,b),\dots,(a,b)\}$,
for some $d\leq a$.

Now we need to consider the case $d=0$ separately.
In this case, we have $Y\subset N(a,b)$ and
$Y\subset (Y\cup\{(a,b)\})$ with
$Y=N(a,b)\cap (Y\cup\{(a,b)\})$. Applying the
meet irreducibility of $Y$, we deduce
$Y=N(a,b)$, so this case is done.

Therefore, for the remainder of the proof, we assume $d>0$.
Assume first that $a=b$. In this case 
the fact that $Y\in R(n)$ and Condition~$(\star)$
imply that the points $(0,b-1),(1,b-1),\dots,(d-1,b-1)$
all must be in $D$. If $a=d$, we obtain
$Y\subset M(a,a,a,a)$. Assume $Y\neq M(a,a,a,a)$.
Then $D$ contains some point in some row strictly below
row $b-1$. Choose a point $p$ in $D$ that is most
north-east out of all points in $D$ in rows 
between $0$ and $b-2$. Then adding $p$ to $Y$
produces an element $Y'\in R(n)$.
Indeed, $p$ cannot force anything new below
row $b-1$ as it is the most north-east point in that
region. It cannot force any new point in row $b-1$
as this row is empty in $Y$. And $p$ cannot force
the diagonal point $(a,a)$ as $p$ is strictly below row $a-1$.
Consequently, $Y$ is the meet of $M(a,a,a,a)$
and $Y'$ which means that $Y= M(a,a,a,a)$  due to 
meet irreducibility.

Now consider the case $d<a$. 
Note that $c=a=b$ and hence we have
$Q_{\mathrm{NE}}=\{(d,a),(d+1,a),\dots,(a,a)\}$. 
Moreover, we already established that the
top row of $Q_{\mathrm{SW}}$ is in $D$.
In fact, a similar argument shows that
$D$ contains $Q_{\mathrm{SW}}$. As any element of
$Q_{\mathrm{SW}}$ together with one of 
$(0,b),\dots,(d-1,b)$ forces one of
$(d,b),\dots,(a,b)$ by Condition~$(\star)$.

Now let us look at the triangular region
of $T(n)$ that is east of $Q_{\mathrm{SW}}$.
Assume $D$ has a point in that region.
In this case let $p$ be some extreme 
north-east point of $D$ in that region.
Since $Q_{\mathrm{SW}}\subset D$,
adding $p$ to $Y$ cannot trigger
Condition~$(\star)$ and hence produces
a cover of $Y$ in $R(n)$. 
As $Y=(Y\cup\{(a,b)\})\cap(Y\cup\{p\})$,
we get a contradiction with the meet-irreducibility
of $Y$. Therefore $D$ cannot intersect this
triangular region at all.

Similarly we obtain that $D$ cannot intersect
the triangular area below $Q_{\mathrm{SW}}$.
Consequently, $Y=M(a,a,a,d)$ in this case.

It remains to consider the case $a<b$.
Note that the whole row $d-1$ of $T(n)$ belongs to $D$.
Indeed, any element of that row together
with $(d-1,b)\in Y$ forces $(d,b)$ by Condition~$(\star)$.
Define $c$ as the minimal element such that
$d\leq c$ and $(d-1,c)\in Y$. Note that 
$c\leq b$. Applying Condition~$(\star)$
to $(d-1,c)$, $(d-1,b)$ and any elements in 
$Q_{\mathrm{NE}}$ forces one of the elements in 
$(d,b),\dots,(a,b)$. Therefore $Q_{\mathrm{NE}}\subset D$.

Next we claim that $c\geq a$. Indeed, if $c<a$, then 
$(d-1,c)\in Y$ and $(d-1,b)\in Y$ force
$(c+1,b)\in Y$ by Condition~$(\star)$,
where $d\leq c+1\leq a$, which is not possible.
In particular, our $a$, $b$, $c$ and $d$
satisfy the defining conditions for $M(a,b,c,d)$.

Next, any $p\in Q_{\mathrm{SW}}$ together with
$(d-1,c)$ forces a point in $Q_{\mathrm{NE}}$
by Condition~$(\star)$. Hence
$Q_{\mathrm{SW}}\subset D$ and we obtain
$Y\subset M(a,b,c,d)$. Now we want to prove that
$Y= M(a,b,c,d)$.

Consider the following depiction of our $Y$.
We know for sure that the elements in 
$Q_{\mathrm{SW}}$ and $Q_{\mathrm{NE}}$
are absent, they are depicted by $\circ$.
The elements we know for sure are present are depicted
by $\bullet$. We also have three potentially
undecided areas with elements depicted by $*$
shown in {\color{violet}violet},
{\color{blue}blue} and {\color{teal}teal}
\begin{displaymath}
\xymatrix@C=1mm@R=1mm{
\bullet&\bullet&\bullet&\bullet&\bullet&\bullet&\bullet&\bullet&\bullet\\ 
\bullet&\bullet&\bullet&\circ&\circ&\bullet&\bullet&\bullet\\ 
{\color{violet}*}&{\color{violet}*}&{\color{violet}*}&\circ&\circ&\bullet&\bullet&\\ 
{\color{violet}*}&{\color{violet}*}&{\color{violet}*}&\circ&\circ&\bullet&\\  
\circ&\circ&\circ&{\color{blue}*}&{\color{blue}*}\\ 
\circ&\circ&\circ&{\color{blue}*}&\\ 
\circ&\circ&\circ&&\\ 
{\color{teal}*}&{\color{teal}*}&&&\\ 
{\color{teal}*}&&&\\ 
}
\end{displaymath}

We start with the {\color{blue}blue} area.
To trigger Condition~$(\star)$ with a blue 
element involved, all involved elements must be blue
and the outcome will also be blue.
Therefore, if $D$ intersects this area, we can pick
some extreme north-east element $p$ in this intersection
and get that $Y\cup\{p\}\in R(n)$ is another cover of $Y$
(in addition to $Y\cup\{(a,b)\}$).
This contradicts our assumption that $Y$ is meet
irreducible. Consequently, the whole blue area
belongs to $Y$.

Now we can play a similar game 
with the {\color{violet}violet} area.
To trigger Condition~$(\star)$ with a violet 
element involved, all involved elements must be 
either violet or teal and the outcome will
be violet. The same argument as in the previous paragraph
gives that the whole violet area
belongs to $Y$.

Finally, the same argument works for the teal area
as well implying that the whole teal area
belongs to $Y$. This proves that 
$Y= M(a,b,c,d)$ and we are done.
\end{proof}

\begin{corollary}\label{thm:mi-count}
The number of meet irreducible elements in 
$R(n)$ is given by  $\binom{n+3}{4}+\binom{n+2}{2}$.
\end{corollary}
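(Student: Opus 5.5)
The count follows by combining Theorem~\ref{thm:mi} with Lemmas~\ref{lem:mi-1} and~\ref{lem:mi-2}: the meet irreducible elements of $R(n)$ are exactly the elements $N(i,j)$, $0\leq i\leq j\leq n$, together with the elements $M(a,b,c,d)$, $1\leq d\leq a\leq c\leq b\leq n$. It therefore suffices to show three things: distinct parameters give distinct elements within each family, the two families are disjoint, and the parameter sets have the stated cardinalities.

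\textbf{Counting parameters.} Pairs $0\leq i\leq j\leq n$ form $T(n)$, which has $\binom{n+2}{2}$ elements. Quadruples $1\leq d\leq a\leq c\leq b\leq n$ are weakly increasing sequences of length $4$ in $\{1,\dots,n\}$. By stars and bars there are $\binom{n+3}{4}$ of them.

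\textbf{Injectivity within each family.} The complement $T(n)\setminus N(i,j)=\{(0,j),\dots,(i,j)\}$ determines $j$ as its row and $i$ as its rightmost point. For $M=M(a,b,c,d)$, the complement is $Q_{\mathrm{SW}}\cup Q_{\mathrm{NE}}$, and both rectangles are nonempty since $d\leq a$, $c\leq b$ and $d-1\leq c-1$.
\begin{itemize}
\item The unique most north-east missing point is $(a,b)$, which recovers $a$ and $b$.
\item The point $(0,c-1)$ is missing. Every missing point in column $0$ lies in $Q_{\mathrm{SW}}$, because $Q_{\mathrm{NE}}$ has $x\geq d\geq 1$. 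So column $0$ of the complement is exactly $\{(0,d-1),\dots,(0,c-1)\}$, and this recovers $c$ and $d$.
\end{itemize}

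\textbf{Disjointness of the families.} The complement of any $N(i,j)$ lies in a single row and contains $(0,j)$. The complement of any $M$ contains $(0,d-1)$ and $(d,c)$. These lie in different rows, since $d-1<d\leq c$. Hence no $M$ equals any $N$.

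Summing the two families gives $\binom{n+3}{4}+\binom{n+2}{2}$. The main point of care is the recovery of $c$ and $d$ from column $0$, which relies on $d\geq1$. Everything else is a routine check.
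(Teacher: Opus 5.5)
Your proposal is correct and follows the paper's route: the paper gets the corollary directly from Theorem~\ref{thm:mi}, Lemmas~\ref{lem:mi-1} and~\ref{lem:mi-2}, and the counts $\binom{n+2}{2}$ and $\binom{n+3}{4}$ stated when the two families are introduced. Your checks that the parametrizations are injective (recovering $(a,b)$ as the maximal missing point and $c,d$ from the missing part of column $0$) and that no $M(a,b,c,d)$ equals an $N(i,j)$ are valid, and they supply details the paper leaves implicit.
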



\begin{remark}
{\em 
Julian K{\"u}lshammer formalized the results of
the original version of this paper in Lean~4 using Aristotle, 
\cite{Kul}. He also fed the original version of the
paper to ChatGPT 6 Astra and asked to improve. 
This eventually resulted in a 26 pages long pdf output with several
claims about the asymptotic growth of our sequence
based on our threshold-state recurrence. These include, 
for example, the following exact claim (lean verified): 
$\displaystyle \lim_{n\to \infty}\sqrt[n]{\frac{r_{n-1}}{n!}}=\rho^{-1}$,
where $\rho=2\log 2 -\frac{\pi^2}{12}$. Unfortunately, most 
of that output is way too advanced for our understanding.
We are happy to provide the files upon request.
}
\end{remark}

\subsection*{Acknowledgements}
The first author is partially supported by the Swedish Research Council.
The second author is partially supported by the Fundamental Research 
Funds for the Central Universities, grant No.~B250201222, and by the Young 
Faculty Overseas Training Program, jointly funded by Hohai University
and the China Scholarship Council. This research was carried out during 
the visit of the second author to Uppsala University,
whose hospitality is gratefully acknowledged.
We thank Julian K{\"u}lshammer for helpful conversations, comments
as well as for the lean formalization and improvements.

We have used Microsoft Copilot and Chat GPT+ extensively
for mathematical discussions and, especially, for help with 
computations and coding during the exploratory stages of this project
as well as for checking the results and the text at various stages.
We have also used SageMath, \cite{Sage}, for computations.

\vspace{1cm}

\noindent
V.~M.: Department of Mathematics, Uppsala University, Uppsala, Sweden\\
{\tt mazor@math.uu.se}

S.~X.:  School of Mathematics, Hohai University, Nanjing, China\\
{\tt shuoyang@hhu.edu.cn}

\newpage

{\bf SageMath code 1.} Below we provide a SageMath code, written
by Microsoft Copilot, that computes the values in Figure~\ref{fig1}.
\vspace{2mm}

\begin{verbatim}

import os
import sys
import subprocess
import tempfile
from pathlib import Path

CPP_SOURCE = r'''#include <bits/stdc++.h>
using U=unsigned __int128; using Big=unsigned long long;
using namespace std;
struct VH{size_t operator()(vector<U> const&v)const{size_t 
h=1469598103934665603ULL;for(U x:v){h^=x;h*=1099511628211ULL;}return h;}};
struct UH{size_t operator()(U x)const{return (uint64_t)x ^ 
((uint64_t)(x>>64)*0x9e3779b97f4a7c15ULL);}};
struct Imp{U a,r;};
vector<pair<int,int>> pts(int n){vector<pair<int,int>> p;for(int 
j=0;j<=n;j++)for(int i=0;i<=j;i++)p.push_back({i,j});return p;}
vector<Imp> build(int n){auto p=pts(n);map<pair<int,int>,int> ix;for(int 
q=0;q<(int)p.size();q++)ix[p[q]]=q;unordered_map<U,U,UH>D;for(int 
x=0;x<(int)p.size();x++){auto [i,j]=p[x];vector<int> V;for(int 
y=0;y<(int)p.size();y++){auto[k,l]=p[y];
if(i<=k&&k<=j&&j<=l)V.push_back(y);}for(int 
y:V)for(int z:V){auto[k,l]=p[y];auto[a,b]=p[z];U 
ant=(U(1)<<x)|(U(1)<<y)|(U(1)<<z);U 
req=U(1)<<ix[{max(k,a),max(l,b)}];
if(j<min(l,b))req|=U(1)<<ix[{j+1,min(l,b)}];
req&=~ant;if(req)D[ant]|=req;}}vector<Imp> 
out;for(auto [a,r]:D)out.push_back({a,r});return out;}
bool good(U s,vector<Imp>const& im){for(auto [a,r]:im)if((s&a)==a && 
(s&r)!=r)return false;return true;}
vector<U> signature(U s,int n, vector<Imp>const& next){int 
old=(n+1)*(n+2)/2,q=n+2,N=1<<q; vector<U> z((N+63)/64);for(int 
t=0;t<N;t++)if(good(s|(U(t)<<old),next))z[t>>6]|=U(1)<<(t&63);return z;}
bool allowed(vector<U>const& sg,int t){return (sg[t>>6]>>(t&63))&1;}
struct State{U rep;Big cnt;vector<U> sg;};
\end{verbatim}
\newpage

\begin{verbatim}
int main(int argc, char** argv){
  int target = 9;
  if(argc > 1) target = stoi(argv[1]);
  if(target < 0 || target > 12){
    cerr << "Please choose a target between 0 and 12.\n";
    return 2;
  }
  vector<vector<Imp>> ims(target+2);
  for(int n=0;n<=target+1;n++) ims[n]=build(n);
  vector<State> cur;
  for(U s=0;s<2;s++) cur.push_back({s,1,signature(s,0,ims[1])});
  cout << "n=0 r=2 states=" << cur.size() << endl;
  for(int n=0;n<target;n++){
    int old=(n+1)*(n+2)/2, q=n+2;
    unordered_map<vector<U>,int,VH> pos;
    vector<State> nxt;
    for(auto const& st:cur){
      int N=1<<q;
      for(int t=0;t<N;t++) if(allowed(st.sg,t)){
        U ns=st.rep|(U(t)<<old);
        auto nsg=signature(ns,n+1,ims[n+2]);
        auto it=pos.find(nsg);
        int k;
        if(it==pos.end()){
          k=nxt.size();
          pos.emplace(nsg,k);
          nxt.push_back({ns,0,move(nsg)});
        } else k=it->second;
        nxt[k].cnt += st.cnt;
      }
    }
    Big total=0;
    for(auto const& x:nxt) total += x.cnt;
    cout << "n=" << n+1 << " r=" << total << " states=" 
    << nxt.size() << endl;
    cur=move(nxt);
  }
}
'''
\end{verbatim}
\newpage

\begin{verbatim}

def main():
     # When loaded from the Windows Sage console, sys.argv may contain
     # Sage's own flags such as "-i".  Therefore we do not read sys.argv.
     target = 9
     build_dir = Path.home() / ".cache" / "admissible_transfer"
     build_dir.mkdir(parents=True, exist_ok=True)
     cpp_file = build_dir / "admissible_transfer.cpp"
     executable = build_dir / "admissible_transfer"

     old = cpp_file.read_text() if cpp_file.exists() else None
     if old != CPP_SOURCE:
         cpp_file.write_text(CPP_SOURCE)

     needs_compile = (not executable.exists() or
                      executable.stat().st_mtime < cpp_file.stat().st_mtime)
     if needs_compile:
         print("Compiling the C++ transfer engine ...", flush=True)
         cmd = ["g++", "-O3", "-march=native", "-std=c++17",
                str(cpp_file), "-o", str(executable)]
         subprocess.run(cmd, check=True)

     print("Computing through n=%d ..." % target, flush=True)
     subprocess.run([str(executable), str(target)], check=True)

if __name__ == "__main__":
     main() 
\end{verbatim}

\newpage

{\bf SageMath code 2.} Below we provide a SageMath code, written
by Chat GPT+, that uses \eqref{eq:weighted-state-total}.
\vspace{2mm}

\begin{verbatim}
import argparse
from collections import defaultdict
from functools import lru_cache
from math import comb
from time import perf_counter
ROW_CACHE_LENGTH = 8
def _make_closed_masks(t):
length = len(t)
if length == 0:
return (0,)
k = t[-1]
left_masks = closed_masks(t[:k])
right = tuple(x - k for x in t[k:-1])
right_masks = closed_masks(right)
top = 1 << (length - 1)
without_top = left_masks
with_top = tuple(left | (right_mask << k) | top
for left in left_masks
for right_mask in right_masks)
return without_top + with_top
@lru_cache(maxsize=None)
def _small_closed_masks(t):
return _make_closed_masks(t)
def closed_masks(t):

if len(t) <= ROW_CACHE_LENGTH:
return _small_closed_masks(t)
return _make_closed_masks(t)
@lru_cache(maxsize=None)
def closed_row_count(t):
if len(t) == 0:
return 1
k = t[-1]
left = t[:k]
right = tuple(x - k for x in t[k:-1])
return closed_row_count(left) * (1 + closed_row_count(right))
def terminal_zero_child(s, mask, last_zero):
a = (mask & -mask).bit_length() - 1
if a > last_zero:
return None
return tuple(0 if (mask >> j) & 1 
else s[j] for j in range(len(s))) + (0,)
def positive_children(q_levels, n):
previous = q_levels[n - 1]
result = {}
for prefix in previous:
for k in range(1, n + 1):
if k < n and min(prefix[k:]) < k:
continue
t = prefix + (k,)
left = t[:k]
right = tuple(x - k for x in t[k:])
result[t] = q_levels[k - 1][left] * q_levels[n - k][right]
return result
def catalan_number(n):
return comb(2 * n, n) // (n + 1)
def verify_level(n, q, a_values, b_values):
assert len(q) == catalan_number(n + 1)
previous_total = 1 if n == 0 else a_values[n - 1]
assert sum(q.values()) == previous_total
b_n = sum(weight for t, weight in q.items() if t[-1] == 0)
convolution = b_n
for k in range(1, n + 1):
a_k_minus_2 = 1 if k == 1 else a_values[k - 2]
convolution += a_k_minus_2 * b_values[n - k]
assert convolution == previous_total
b_values.append(b_n)
def compute_a(N, show_stats=False):
q_levels = [{(0,): 1}]
a_values = []
b_values = []
start = perf_counter()
if show_stats:
print(" n states admissible rows a_n seconds")
for n in range(N + 1):
q = q_levels[n]
verify_level(n, q, a_values, b_values)
a_n = 0
row_count = 0
zero_weights = defaultdict(int)
for t, weight in q.items():
f_t = closed_row_count(t)
row_count += f_t
a_n += weight * f_t
if n < N:
masks = closed_masks(t)
assert len(masks) == f_t
last_zero = max(j for j, value in enumerate(t) if value == 0)
for mask in masks:
if mask == 0:
continue
child = terminal_zero_child(t, mask, last_zero)
if child is not None:
zero_weights[child] += weight
a_values.append(a_n)
if show_stats:
print("{:2d} {:11d} {:17d} {:19d} {:10.3f}".format(
n, len(q), row_count, a_n, perf_counter() - start))
if n < N:
next_level = dict(zero_weights)
next_level.update(positive_children(q_levels, n + 1))
q_levels.append(next_level)
known = [2, 7, 37, 265, 2396, 26118, 332723, 4845044, 79327314]
assert a_values[: min(len(a_values), len(known))] == known[: len(a_values)]
return a_values
def parse_arguments():
parser = argparse.ArgumentParser(
description="Compute a_0, ..., a_N using the threshold-state recursion.")
parser.add_argument("N", nargs="?", type=int, 
help="largest index to compute")
parser.add_argument(
"--stats", action="store_true", 
help="show state counts and cumulative timings")
args = parser.parse_args()
if args.N is None:
args.N = int(input("N = "))
if args.N < 0:
parser.error("N must be non-negative")
return args
if __name__ == "__main__":
arguments = parse_arguments()
values = compute_a(arguments.N, arguments.stats)
print("[a_0, ..., a_{}] = {}".format(arguments.N, values))
\end{verbatim}

\newpage

{\bf SageMath code 3.} Below we provide a SageMath code, written
by Chat GPT+, that uses \eqref{eq:primitive-weight-convolution}.
\vspace{2mm}

\begin{verbatim}
import argparse
from collections import defaultdict
from functools import lru_cache
from math import comb
from time import perf_counter
ROW_CACHE_LENGTH = 8
def _make_closed_masks(t):
length = len(t)
if length == 0:
return (0,)
k = t[-1]
left_masks = closed_masks(t[:k])
right = tuple(x - k for x in t[k:-1])
right_masks = closed_masks(right)
top = 1 << (length - 1)
return left_masks + tuple(
left | (right_mask << k) | top
for left in left_masks
for right_mask in right_masks)
@lru_cache(maxsize=None)
def _small_closed_masks(t):
return _make_closed_masks(t)
def closed_masks(t):
if len(t) <= ROW_CACHE_LENGTH:
return _small_closed_masks(t)
return _make_closed_masks(t)
def terminal_zero_states(previous):
result = defaultdict(int)
for s, weight in previous.items():
last_zero = max(j for j, value in 
enumerate(s) if value == 0)
for mask in closed_masks(s):
if mask == 0:
continue
a = (mask & -mask).bit_length() - 1
if a > last_zero:
continue
t = tuple(
0 if (mask >> j) & 1 else s[j] 
for j in range(len(s)) ) + (0,)
result[t] += weight
return dict(result)

def positive_states(q_levels, n):
previous = q_levels[n - 1]
result = {}
for prefix in previous:
for k in range(1, n + 1):
if k < n and min(prefix[k:]) < k:
continue
t = prefix + (k,)
left = t[:k]
right = tuple(x - k for x in t[k:])
result[t] = q_levels[k - 1][left] * q_levels[n - k][right]
return result
def next_q_level(q_levels, n):
result = terminal_zero_states(q_levels[n - 1])
result.update(positive_states(q_levels, n))
return result
def catalan_number(n):
return comb(2 * n, n) // (n + 1)
def compute_a(N, show_stats=False):
q_levels = [{(0,): 1}]
b_values = [1]
a_values = []
start = perf_counter()
if show_stats:
print(" m q-level states b_(m+1) a_m seconds")
for n in range(1, N + 2):
q_n = next_q_level(q_levels, n)
q_levels.append(q_n)
assert len(q_n) == catalan_number(n + 1)
b_n = sum(weight for t, weight in q_n.items() if t[-1] == 0)
b_values.append(b_n)
# Formula (13): a_{n-1}=b_n+sum_{k=1}^n a_{k-2}b_{n-k}.
a_n_minus_1 = b_n
for k in range(1, n + 1):
a_k_minus_2 = 1 if k == 1 else a_values[k - 2]
a_n_minus_1 += a_k_minus_2 * b_values[n - k]
assert a_n_minus_1 == sum(q_n.values())
a_values.append(a_n_minus_1)
if show_stats:
print("{:2d} {:9d} {:11d} {:19d} {:19d} {:10.3f}".format(
n - 1,n,len(q_n),b_n,a_n_minus_1,perf_counter() - start,))
known = [2, 7, 37, 265, 2396, 26118, 332723, 4845044, 79327314]
assert a_values[: min(len(a_values), len(known))] == known[: len(a_values)]
return a_values
def parse_arguments():
parser = argparse.ArgumentParser(
description="Compute a_0, ..., a_N from formula (13), without f(t).")
parser.add_argument("N", nargs="?", type=int, 
help="largest index to compute")
parser.add_argument(
"--stats", action="store_true", 
help="show state counts and cumulative timings")
args = parser.parse_args()
if args.N is None:
args.N = int(input("N = "))
if args.N < 0:
parser.error("N must be non-negative")
return args
if __name__ == "__main__":
arguments = parse_arguments()
values = compute_a(arguments.N, arguments.stats)
print("[a_0, ..., a_{}] = {}".format(arguments.N, values))

\end{verbatim}

\end{document}